\newcommand{\bbC}{\mathbb{C}}
\newcommand{\bbD}{\mathbb{D}}
\newcommand{\bbN}{\mathbb{N}}
\newcommand{\bbQ}{\mathbb{Q}}
\newcommand{\bbR}{\mathbb{R}}
\newcommand{\bbT}{\mathbb{T}}
\newcommand{\bbZ}{\mathbb{Z}}
\newcommand{\calK}{\mathcal{K}}
\newcommand{\calL}{\mathcal{L}}
\newcommand{\calS}{\mathcal{S}}
\newcommand{\calU}{\mathcal{U}}
\newcommand{\per}{\operatorname{per}}
\newcommand{\pnt}{\operatorname{pnt}}
\theoremstyle{definition}
\newtheorem{definition}{Definition}[section]
\newtheorem{remark}[definition]{Remark}
\newtheorem{example}[definition]{Example}
\newtheorem{examples}[definition]{Examples}
\theoremstyle{plain}
\newtheorem{proposition}[definition]{Proposition}
\newtheorem{lemma}[definition]{Lemma}
\newtheorem{theorem}[definition]{Theorem}
\newtheorem{corollary}[definition]{Corollary}
\newtheorem*{theorem_no_number}{Theorem}
\begin{document}

\title{On the Peripheral Spectrum of Positive Operators}

\thanks{During his work on this article the author was supported by a scholarship of the ``Landesgraduiertenf\"orderung Baden-W\"urttemberg''.}

\author{Jochen Gl\"uck}

\address{Institute of Applied Analysis, Ulm University, 89069 Ulm, Germany}
\email{jochen.glueck@uni-ulm.de}

\begin{abstract}
	This paper contributes to the analysis of the peripheral (point) spectrum of positive linear operators on Banach lattices. We show that, under appropriate growth and regularity conditions, the peripheral point spectrum of a positive operator is cyclic and that the corresponding eigenspaces fulfil a certain dimension estimate. A couple of examples demonstrates that some of our theorems are optimal. Our results on the peripheral point spectrum are then used to prove a sufficient condition for the peripheral spectrum of a positive operator to be cyclic; this generalizes theorems of Lotz and Scheffold. 
\end{abstract}

\keywords{positive operator; Perron-Frobenius theory; peripheral spectrum; peripheral point spectrum; cyclic; growth condition}
\subjclass[2010]{47B65; 47A10}

\maketitle

\section{Introduction and Preliminaries} \label{section_introduction}

The study of spectral properties of positive operators in infinite dimensions is by now a classical topic in operator theory. Beginning with the results of Kre{\u\i}n and Rutman in \cite{Kreuin1950}, many interesting and sophisticated spectral results on those operators have been proved. Positive operators on Banach lattices are particularly well-behaved in this context; we refer to the survey article \cite{Grobler1995} for an overview of their spectral theory. Still today, the field raises the interest of many researches; recent contributions to the theory concern for example the analysis of essential spectra of positive operators \cite{Alekhno2007}, \cite{Alekhno2009} and new comparison results on positive operators and there spectral radius \cite{Gao2013}. \par 

In this paper we are mainly concerned with the \emph{peripheral spectrum} $\sigma_{\per}(T) = \sigma(T) \cap \{z \in \bbC: \, |z| = r(T)\}$ and the \emph{peripheral point spectrum} $\sigma_{\per,\pnt}(T) := \sigma_{\pnt}(T) \cap \{z \in \bbC: \, |z| = r(T)\}$ of a positive operator $T$ on a complex Banach lattice $E$; here, $r(T)$ denotes the spectral radius of $T$, $\sigma(T)$ is its spectrum and $\sigma_{\pnt}(T)$ its point spectrum. Recall that a set $M \subset \bbC$ is called \emph{cyclic} if $re^{i\theta} \in M$ ($r\ge 0$, $\theta \in \bbR$) implies that $re^{in\theta} \in M$ for each $n \in \bbZ$. An interesting question in the spectral theory of positive operators is to find conditions which ensure that the peripheral spectrum or the peripheral point spectrum of a positive operator is cyclic. \par 

Our goal is to give several new such conditions. To do so we first prove some results on the fixed space of a Markov operator in Section~\ref{section_fixed_space_markov} and use them for a preliminary analysis of certain eigenvalues of positive operators in Section~\ref{section_eigenvalues_with_dom_eigenvectors}. In Section~\ref{section_ws_bounded_operators} we then introduce the notion of a \emph{(WS)-bounded} operator on a complex Banach space. This is a rather weak boundedness condition whose exact definition requires some preliminary work; for a first impression about this notion, we refer the reader to Examples~\ref{examp_ws_bounded} where several classes of (WS)-bounded operators are listed. For example, an operator $T$ (with $r(T) = 1$) is (WS)-bounded if it is Abel-bounded or if it fulfils the condition $\liminf_{n \to \infty} ||T^n|| < \infty$. In Section~\ref{section_per_pnt_spec_general} we are finally able to give some sufficient conditions for the peripheral point spectrum of a positive operator to be cyclic. We sum up our main results from this section in the following theorem:

\begin{theorem_no_number}
	Let $T$ be a positive operator on a complex Banach lattice $E$, $r(T) = 1$, and assume that at least one of the following conditions is fulfilled:
	\begin{enumerate}[(a)]
		\item $E$ has a pre-dual Banach lattice, $T$ has a pre-adjoint and $T$ is (WS)-bounded. \par 
		\item $E$ is a KB-space and $T$ is (WS)-bounded. \par 
		\item $E$ has order-continuous norm and $T$ is mean ergodic. \par 
		\item $T$ is weakly almost-periodic.
	\end{enumerate}
	Then we have $\dim \ker(e^{i\theta}-T) \le \dim \ker(e^{in\theta}-T)$ for each $n \in \bbZ$ and each $\theta \in \bbR$. In particular, the peripheral point spectrum of $T$ is cyclic.
\end{theorem_no_number}

In the above theorem we use the convention that the dimension of a vector space is either an integer or $\infty$, i.e.~we do not distinguish between different infinite cardinalities. For the notions used in the conditions (a)--(d) we refer to Section~\ref{section_per_pnt_spec_general}. The above theorem contains generalizations of several known results; details and references are also given in Section~\ref{section_per_pnt_spec_general}. In Section~\ref{section_per_pnt_spec_markov} we continue our analysis of the peripheral point spectrum of positive operators with special emphasis on Markov operators; see the end of the introduction for a definition of these operators. \par 

In Section~\ref{section_per_spec} we deal with the peripheral spectrum of positive operators. It is a long open question whether every positive operator on a complex Banach lattice has cyclic peripheral spectrum. We are not able to solve this here, but we present a generalization of some known results. More precisely, we prove the following result in Theorem~\ref{thm_ws_bounded_op_zykl_per_spec}:

\begin{theorem_no_number}
	Let $T$ be a positive operator on a complex Banach lattice $E$, $r(T) = 1$. If $T$ is (WS)-bounded, then the peripheral spectrum of $T$ is cyclic.
\end{theorem_no_number}

This generalizes a result of Lotz \cite[Theorem 4.7]{Lotz1968} who proved that every Abel-bounded positive operator has cyclic peripheral spectrum and a result of Scheffold \cite[Satz~3.6]{Scheffold1971} who proved that a positive operator $T$ with spectral radius $1$ has cyclic peripheral spectrum if $\liminf_{n \to \infty} ||T^n|| < \infty$. For further contributions to this topic we refer the reader to Lotz \cite[Theorem 4.9 and Theorem 4.10]{Lotz1968} (see also \cite[Theorem V.4.9 and its Corollary]{Schaefer1974} for an English version), to Krieger \cite[Satz~2.2.3]{Krieger1969} (see also \cite[p.~352]{Schaefer1974} where this result is stated in English) and to Zhang \cite[Theorem~2.11]{Zhang1993}. In Section~\ref{section_boundary_pnt_spec_of_markov_sg} we briefly deal with $C_0$-semigroups of Markov operators and we show that one of our examples in Section~\ref{section_per_pnt_spec_markov} can be adapted to the $C_0$-semigroup case. In the appendix, we recall some facts about the signum operator on complex Banach lattices. \par 

Throughout the article, the reader is assumed to be familiar with the theory of (real and complex) Banach lattices and with the construction of filter (and, in particular, ultra) products of Banach lattices; for the latter topic see for example \cite[Section~V.1]{Schaefer1974} or \cite[p.\,251--253]{Meyer-Nieberg1991}. To read Section~\ref{section_boundary_pnt_spec_of_markov_sg}, familiarity with the basic concepts of $C_0$-semigroups is also required. We refer to \cite{Engel2000} as a standard reference for this topic. \par 

Let us fix some notation: If $X$ is a real or complex Banach space, then we denote by $\calL(X)$ the space of bounded linear operators on $X$. If $X$ is a complex Banach space and $T \in \calL(X)$, then we denote by $\sigma(T)$ the \emph{spectrum} and by $\sigma_{\operatorname{pnt}}(T)$ the \emph{point spectrum} of $T$. If $\lambda \in \bbC \setminus \sigma(T)$, then $R(\lambda,T) := (\lambda - T)^{-1}$ denotes the \emph{resolvent} of $T$ in $\lambda$. The same notations are also used for the (point) spectrum and the resolvent of unbounded operators $A: X \supset D(A) \to X$. If $E$ is a complex Banach lattice, then it is by definition the complexification of a real Banach lattice $E_\bbR$ (cf.~\cite[Section~II.11]{Schaefer1974}); if $x,y \in E$, then assertions such as $x \ge y$ are always to be understood as a shorthand for $x,y \in E_\bbR$ and $x \ge y$. If $E$ is a real or complex Banach lattice and $x,y \in E$, then we write $x > y$ to say that $x \ge y$, but $x \not= y$. If $K$ is a compact Hausdorff space, then the space of real (respectively complex) valued continuous functions on $K$ is denoted by $C(K;\bbR)$ (respectively by $C(K;\bbC)$); those spaces will always be endowed with the supremum norm $||\cdot||_\infty$. A bounded linear operator $T$ on $C(K;\bbR)$ or $C(K;\bbC)$ is called a \emph{Markov operator} if $T$ is positive and if $T\mathbbm{1} = \mathbbm{1}$. By $c(\bbN;\bbC)$ we denote the space of all complex-valued convergent sequences which are indexed by $\bbN$; by $c_0(\bbN;\bbC) \subset c(\bbN;\bbC)$ we denote the space of all complex-valued sequences which converge to $0$. The corresponding spaces of real-valued sequences are denoted by $c(\bbN;\bbR)$ and $c_0(\bbN;\bbR)$, respectively. A vector subspace $V$ of a real Banach lattice $E$ is called a \emph{lattice subspace} of $E$ if $V$ is a vector lattice with respect to the order induced by $E$; $V$ is called a \emph{sublattice} of $E$ if $|x| \in V$ for every $x \in V$. We use the symbol $\bbT := \{z \in \bbC: |z| = 1\}$ to denote the complex unit circle. Further notation is introduced as it is needed.

\section{The fixed space of Markov operators} \label{section_fixed_space_markov}

The purpose of this section is to establish the following structure result on the fixed space of a Markov operator on an order-complete $C(K,\bbR)$-space; will shall need this result in the subsequent sections. For the definition of the notion \emph{AM-space} we refer the reader to \cite[Definition~II.7.1]{Schaefer1974}.

\begin{theorem} \label{thm_fixed_space_of_markov_op_real}
	Let $E$ be an order complete $C(K;\bbR)$-space and let $T$ be a Markov operator on $E$. Then the fixed space $F := \ker(1-T)$ is an order complete lattice subspace (not necessarily a sublattice) of $E$. When endowed with the supremum norm $||\cdot||_\infty$, $F$ is a Banach lattice and an AM-space with unit $\mathbbm{1}$.
\end{theorem}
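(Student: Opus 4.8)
The plan is to realize $F = \ker(1-T)$ concretely as a space of continuous functions and then transfer the AM-structure. Since $E$ is an order complete $C(K;\mathbb{R})$-space, it is (lattice and algebra) isomorphic to $C(\hat K;\mathbb{R})$ for some Stonean (extremally disconnected, compact Hausdorff) space $\hat K$; without loss of generality assume $E = C(K;\mathbb{R})$ with $K$ Stonean. Note $\mathbbm{1} \in F$ because $T\mathbbm{1} = \mathbbm{1}$, and since $T$ is a positive contraction (a Markov operator on a $C(K)$-space has $\|T\| = \|T\mathbbm{1}\|_\infty = 1$), $F$ is a norm-closed subspace of $E$ on which the supremum norm is already the ambient norm, so $F$ is a Banach space under $\|\phdot\|_\infty$ and $\mathbbm{1}$ is an order unit for the induced order.

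First I would show $F$ is a lattice subspace, i.e.\ that any two elements $f,g \in F$ have a supremum \emph{in $F$} (with respect to the order inherited from $E$). The natural candidate is the order-theoretic infimum in $F$ of all fixed vectors dominating both $f$ and $g$; the key point is that this family is nonempty (it contains suitable multiples of $\mathbbm{1}$, since $f,g$ are bounded) and that its infimum taken in the order complete lattice $E$ again lies in $F$. For the latter, I would use that $T$ is an order continuous lattice homomorphism-like map on such infima: more precisely, if $h = \inf_E\{u \in F : u \ge f, u \ge g\}$, then applying the positive contraction $T$ and using $Tu = u$ for each $u$ in the indexing set gives $Th \le u$ for all such $u$, hence $Th \le h$; a symmetric argument, or an argument via $\mathbbm{1} - $ applied to the reversed situation (using $T(\mathbbm{1}-h) \le \mathbbm{1} - h$ from $Th \ge$ something), should yield $Th = h$. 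The cleanest route is probably: $Th \le h$ for every such "upper-bound infimum", apply this also with $f,g$ replaced by $-f,-g$ to control $h$ from below, and combine with $T\mathbbm{1}=\mathbbm{1}$ and order completeness to force equality. I would similarly handle order completeness of $F$: for a norm-bounded family in $F$, its supremum in $E$ exists (order completeness of $E$), lands back in $F$ by the same $Th \le h \Rightarrow Th = h$ mechanism applied to arbitrary suprema, and is bounded in $\|\phdot\|_\infty$, so it serves as the supremum in $F$ as well.

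Once $F$ is known to be an order complete lattice subspace containing the order unit $\mathbbm{1}$ with $B_F = \{f \in F : -\mathbbm{1} \le f \le \mathbbm{1}\}$ being exactly the $\|\phdot\|_\infty$-unit ball (this last identity is immediate because the order on $F$ is induced from $E = C(K)$ and there the unit ball is $[-\mathbbm{1},\mathbbm{1}]$), the AM-space assertion is essentially Kakutani's theorem: a Banach lattice whose norm-closed unit ball coincides with the order interval $[-\mathbbm{1},\mathbbm{1}]$ for an order unit $\mathbbm{1}$ is an AM-space with unit $\mathbbm{1}$. I expect the main obstacle to be the step that $F$ is closed under the lattice operations of $E$-completeness \emph{relative to $F$} — i.e.\ genuinely proving $Th = h$ (not merely $Th \le h$) for the relevant suprema/infima — since a priori $T$ need not commute with infima and $F$ need not be a sublattice, so one must exploit the Markov normalization $T\mathbbm{1} = \mathbbm{1}$ together with order completeness of $E$ in a slightly delicate way rather than a naive order-continuity argument.
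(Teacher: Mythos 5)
Your overall skeleton (lattice subspace with order unit $\mathbbm{1}$ first, then read off the AM-structure from the identity of the unit ball with the order interval $[-\mathbbm{1},\mathbbm{1}]$) matches the paper, and the final Kakutani-type step is fine. But the core step has a genuine gap, and it is exactly the one you flag yourself as ``the main obstacle'': for $h := \inf_E\{u \in \ker(1-T): u \ge f,\ u \ge g\}$ you only obtain $Th \le h$, and neither of your suggested repairs closes this. The ``symmetric argument'' with $-f,-g$ produces a \emph{different} element (a candidate infimum satisfying the reverse inequality), not a lower bound on $Th$ for the same $h$; and since a Markov operator on $C(K;\bbR)$ need not be order continuous, iterating $T$ on $h$ does not terminate at a fixed point either — one keeps getting $Th_\infty \le h_\infty$ for $h_\infty := \inf_n T^n h$. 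The paper closes the gap with a different, Knaster--Tarski-type device: it considers the set $A$ of all $w \in E$ with $w \le Tw$ that are upper bounds of $G$ and are dominated by \emph{every} fixed upper bound of $G$. This $A$ is nonempty (it contains $\sup_E G$), bounded above by $c\mathbbm{1}$, $T$-invariant, and closed under suprema; hence $g_{\max} := \sup A$ lies in $A$, and from $g_{\max} \le Tg_{\max} \in A$ maximality forces $Tg_{\max} = g_{\max}$. That one-sided invariance condition $w \le Tw$ built into $A$, combined with taking a \emph{supremum} rather than an infimum, is the missing idea; a posteriori $g_{\max}$ coincides with your $h$, but your route does not prove it is fixed.

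There is a second, concrete error in your treatment of order completeness: you assert that the supremum in $E$ of a norm-bounded family in $F$ ``lands back in $F$''. This is false, and the paper's own Examples~\ref{examp_fixed_space_of_markov_op}(a) refutes it: for the $3\times 3$ Markov matrix there and $\hat f = (1,0,-1) \in F$, one has $\sup_E\{\hat f,-\hat f\} = (1,0,1) \notin F$, while the supremum \emph{in} $F$ is the strictly larger element $(1,1,1)$. For $s := \sup_E G$ with $G \subset F$ one only gets $Ts \ge s$, and again equality fails in general. So both the binary suprema and the general order-completeness argument must go through the auxiliary set $A$ (or an equivalent mechanism) rather than through suprema or infima computed in $E$.
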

\begin{proof}
	Let $\emptyset \not= G \subset F$ be bounded above in $F$; we have to show that $G$ has a supremum in $F$. To do so, first note that $c := \sup\{||g||_\infty: g \in G\} < \infty$ and that $G$ is in fact bounded above by $c\mathbbm{1}$. Now, consider the set
	\begin{align*}
		A := \{h \in E: & \; h \le Th \text{, } h \text{ is an upper bound of } G \text{ and} \\
		& h \le k \text{ for every } k \in F \text{ which is an upper bound of } G\} \text{.}
	\end{align*}
	Then the supremum of $G$ in $E$ is contained in $A$, so $A$ is non-empty. Moreover, $A$ is bounded above by $c\mathbbm{1}$. Furthermore, $A$ is clearly $T$-invariant and whenever $\emptyset \not= B \subset A$, then $\sup B$ exists in $E$ and is contained in $A$. Now, let $g_{\max} := \sup A$. Then $g_{\max} \in A$, and we thus have $g_{\max} \le Tg_{\max} \in A$. By definition of $g_{\max}$ this implies that $Tg_{\max} = g_{\max}$, so $g_{\max} \in F$. Since $g_{\max} \in A$, we conclude that $g_{\max}$ is the supremum of $G$ in $F$. This shows that $F$ is a lattice subspace of $E$ and that it is order complete. Besides this, note that we have $g_{\max} \le c \mathbbm{1}$. \par 		
	Let us show next that $F$ is a Banach lattice with respect to $||\cdot||_\infty$. If $f \in F$, then the modulus $|f|_F$ of $f$ in $F$ is the supremum of $\pm f$ in $F$; hence, $|f|_F$ is given by the function $g_{\max}$ above if we choose $G = \{f,-f\}$. In this case, $c = ||f||_\infty$ and we thus have $|f| \le |f|_F = g_{\max} \le ||f||_\infty \mathbbm{1}$. This shows that $||\,|f|_F||_\infty = ||f||_\infty$. Since we clearly have $||f_1||_\infty \le ||f_2||_\infty$ for all $f_1,f_2 \in F$ with $0 \le f_1 \le f_2$, we conclude that $F$ is indeed a Banach lattice with respect to the $||\cdot||_\infty$-norm. \par 
	To show that $F$ is an AM-space with respect to the norm $||\cdot||_\infty$, let $0 \le g_1,g_2 \in F$. If $G = \{g_1,g_2\}$, then the supremum $g_1 \lor_F g_2$ of $g_1$ and $g_2$ in $F$ is given by the function $g_{\max}$ above. Since $C(K;\bbR)$ is an AM-space, we have $c = ||g_1||_\infty \lor \, ||g_2||_\infty = ||g_1 \lor g_2||_\infty$ and hence
	\begin{align*}
		g_1 \lor g_2 \, \le \, g_1 \lor_F g_2 = g_{\max} \le ||g_1 \lor g_2||_\infty \mathbbm{1} \text{.}
	\end{align*}
	Therefore, $||g_1 \lor_F g_2||_\infty = ||g_1 \lor g_2||_\infty = ||g_1||_\infty \lor ||g_2||_\infty$. Thus, $F$ is an AM-space with respect to the norm $||\cdot||_\infty$. Moreover, $\mathbbm{1}$ is an element of $F$; since $\mathbbm{1}$ is the largest element in the unit ball of $C(K,\bbR)$, it is in particular the largest element of the unit ball in $F$. Hence, the AM-space $F$ contains $\mathbbm{1}$ as a unit.  
\end{proof}

\begin{corollary} \label{cor_fixed_space_of_markov_op_real}
	Let $E$ be an order complete $C(K;\bbC)$-space, let $T$ be a Markov operator on $E$ and denote its fixed space by $F = \ker(1-T)$. \par 
	If $F_\bbR := F \cap C(K;\bbR)$, then we have $F = F_\bbR + iF_\bbR$ and therefore, $F$ is a complex order-complete AM-space with unit $\mathbbm{1}$ when endowed with an appropriate equivalent norm.
\end{corollary}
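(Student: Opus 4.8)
The plan is to reduce the corollary to Theorem~\ref{thm_fixed_space_of_markov_op_real} by a complexification argument. First I would observe that the Markov operator $T$ on $E = C(K;\bbC)$ is the complexification of a Markov operator $T_\bbR$ on $C(K;\bbR)$: being positive, $T$ maps $C(K;\bbR)$ into itself (for real $x$ write $x = x^+ - x^-$ and use $Tx^\pm \ge 0$), and $T\mathbbm{1} = \mathbbm{1}$, so $T_\bbR := T|_{C(K;\bbR)}$ is a Markov operator on the order complete space $C(K;\bbR)$ and $T = (T_\bbR)_{\bbC}$. Consequently, for $f = f_1 + if_2$ with $f_1,f_2 \in C(K;\bbR)$ we have $Tf = T_\bbR f_1 + iT_\bbR f_2$, so $Tf = f$ holds if and only if $T_\bbR f_1 = f_1$ and $T_\bbR f_2 = f_2$; this gives $F = F_\bbR + iF_\bbR$ with $F_\bbR = \ker(1-T_\bbR) = F \cap C(K;\bbR)$, i.e.\ $F$ is the complexification of $F_\bbR$. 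By Theorem~\ref{thm_fixed_space_of_markov_op_real} applied to $T_\bbR$, the space $F_\bbR$ endowed with $||\cdot||_\infty$ is an order complete AM-space with unit $\mathbbm{1}$.

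Next I would invoke the standard theory of complexifications of uniformly (hence order) complete real Banach lattices: the modulus $|f|_F := \sup_{\theta \in \bbR}\bigl(\cos\theta\, f_1 + \sin\theta\, f_2\bigr)$, the supremum being taken in $F_\bbR$ (and existing since $F_\bbR$ is order complete and the family is bounded above by $|f_1|_F + |f_2|_F \in F_\bbR$), is well defined, and $F = F_\bbR + iF_\bbR$ equipped with the norm $|||f||| := ||\,|f|_F\,||_\infty$ is a complex order complete Banach lattice; since $F_\bbR$ is an AM-space with unit $\mathbbm{1}$, so is its complexification $F$, with the same unit $\mathbbm{1} \in F_\bbR \subset F$. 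This already yields the asserted structure, provided one accepts the phrase ``appropriate equivalent norm'': the norm $|||\cdot|||$ need not coincide with the restriction of $||\cdot||_\infty$ from $C(K;\bbC)$ to $F$, because the intrinsic lattice modulus $|f|_F$ of the (non-sublattice!) space $F$ is in general strictly larger than the pointwise modulus $|f|$ that $F$ inherits from $C(K;\bbC)$. I regard this discrepancy as the only real (and still minor) point of care in the proof, and it is precisely the reason the statement only claims an equivalent norm.

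It therefore remains to verify that $|||\cdot|||$ and $||\cdot||_\infty|_F$ are equivalent, which is a short computation. On the one hand, $|f|_F$ is, as an element of $C(K;\bbR)$, an upper bound for all $\cos\theta\, f_1 + \sin\theta\, f_2$, hence $|f|_F \ge |f|$ pointwise and thus $||f||_\infty \le |||f|||$. On the other hand, using sub-additivity of the complexification modulus we have $|f|_F \le |f_1|_F + |f_2|_F$, and combining this with the estimate $|g|_F \le ||g||_\infty\,\mathbbm{1}$ for $g \in F_\bbR$ established within the proof of Theorem~\ref{thm_fixed_space_of_markov_op_real} and with the trivial bound $||f_j||_\infty \le ||f||_\infty$, we obtain $|f|_F \le 2\,||f||_\infty\,\mathbbm{1}$ and hence $|||f||| \le 2\,||f||_\infty$. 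Thus the two norms are equivalent, which completes the proof of the corollary.
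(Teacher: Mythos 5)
Your proof is correct and follows essentially the same route as the paper: the paper's own argument is just the observation that $T$ preserves real-valued functions (giving $F = F_\bbR + iF_\bbR$) followed by an appeal to Theorem~\ref{thm_fixed_space_of_markov_op_real} via complexification. Your additional verification that the intrinsic AM-norm $|||f||| = ||\,|f|_F\,||_\infty$ is equivalent to $||\cdot||_\infty|_F$ (using $|f| \le |f|_F \le 2||f||_\infty \mathbbm{1}$) is a correct and worthwhile filling-in of a detail the paper leaves implicit behind the phrase ``appropriate equivalent norm''.
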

\begin{proof}
	Since $T$ maps real-valued functions to real-valued functions we have $F = F_\bbR + i F_\bbR$. The remaining assertions follow from Theorem~\ref{thm_fixed_space_of_markov_op_real}. 
\end{proof}

In view of Theorem~\ref{thm_fixed_space_of_markov_op_real} it is natural to ask whether the fixed space of the Markov operator $T$ is in fact a sublattice of $C(K;\bbR)$; moreover, if we drop the condition on $C(K;\bbR)$ to be order complete, one might ask whether the fixed space of $T$ is, though no longer order complete, still a lattice subspace of $C(K;\bbR)$. The answer to both questions is negative as the following two examples show.

\begin{examples} \label{examp_fixed_space_of_markov_op}
	(a) There is a Markov operator $T$ on $\bbR^3$ such that $F := \ker(1-T)$ is not a sublattice of $\bbR^3$. \par 
	Indeed, let $T$ be the operator whose representation matrix with respect to the canonical basis is given by
	\begin{align*}
		\begin{pmatrix}
			1 & 0 & 0 \\
			\frac{1}{3} & \frac{1}{3} & \frac{1}{3} \\
			0 & 0 & 1
		\end{pmatrix} \text{.}
	\end{align*}
	Then $T$ is clearly a Markov operator and its fixed space $\ker(1-T)$ coincides with the linear span of $(1,1,1)$ and $(1,0,-1)$. Now, consider the vector $\hat f = (1,0,-1) \in \ker(1-T)$. 
	Then the supremum of $\pm \hat f$ in $\bbR^3$ is given by $|\hat f| = (1,0,1)$, but this vector is not contained in the fixed space $\ker(1-T)$. Hence, $\ker(1-T)$ is not a sublattice of $\bbR^3$. However, it follows from Theorem~\ref{thm_fixed_space_of_markov_op_real} that it is a lattice subspace of $\bbR^3$, so $\pm \hat f$ must have a supremum in $\ker(1-T)$ (and it is easy to check that this supremum is given by $(1,1,1)$). \par
	(b) There is a (non order-complete) $C(K;\bbR)$-space $E$ and a Markov operator $S$ on $E$ such that the fixed space $\ker(1-S)$ is not a lattice subspace of $E$. Indeed, let
	\begin{align*}
		E  & = \{(f,g,h) \in \bbR^3 \times c(\bbN; \bbR) \times c(\bbN; \bbR): \lim g = \lim h\} \text{,}
	\end{align*}
	where $c(\bbN;\bbR)$ is the space of all real-valued convergent sequences. Clearly, $E$ is an AM-space with unit and thus isometrically lattice isomorphic to some $C(K;\bbR)$-space. Let $T \in \calL(\bbR^3)$ be the operator from Example (a) and define $S \in \calL(E)$ by $S(f,g,h) = (f',g',h')$ where
	\begin{align*}
		f' = Tf \text{,} \qquad g' = (f_2,g_1,g_2,g_3,...) \text{,} \qquad h' = h \text{.}
	\end{align*}
	Note that $S$ indeed maps $E$ into $E$, and that $S$ is a Markov operator on $E$. The fixed space of $S$ is given by
	\begin{align*}
		\ker(1-S) = \{(f,g,h) \in E: f \in \ker(1-T) \text{, } g = f_2 \mathbbm{1}_\bbN \} \text{.}
	\end{align*}
	As in Example (a) let $\hat f = (1,0,-1) \in \ker(1-T)$; we now consider the element $(\hat f,0,0) \in \ker(1-S)$ and show that $\pm (\hat f,0,0)$ does not have a supremum in $\ker(1-S)$. \par 
	Indeed, assume for a contradiction that $(f',g',h') \in \ker(1-S)$ is the lowest upper bound of $\pm(\hat f,0,0)$ in $\ker(1-S)$. Then it follows from Example (a) that $f' \ge (1,1,1)$ and hence $g' = f_2 \mathbbm{1}_\bbN \ge \mathbbm{1}_\bbN$. The vector $h' \in c(\bbN;\bbR)$ has to fulfil $0 \le h'$ and $\lim h' = \lim g' \ge 1$. Due to this estimate, $h'$ is non-zero, so we can find another element $0 \le h'' \in c(\bbN;\bbR)$ which fulfils $h'' < h'$ as well as $\lim h'' = \lim g'$. Hence, $(f',g',h'') \in \ker(1-S)$ is also an upper bound of $\pm(f,0,0)$ but it is smaller then $(f',g',h')$. This is a contradiction.
\end{examples}

\section{Eigenvalues with dominated eigenvectors} \label{section_eigenvalues_with_dom_eigenvectors}

In this section we prove cyclicity results for eigenvalues whose corresponding eigenvectors satisfy certain domination properties; we also give estimates on the dimensions of the corresponding eigenspaces. To do so, we need the following proposition, which is based on some well-known facts from Perron-Frobenius theory.

\begin{proposition} \label{prop_eigenvalues_and_funtions_of_lattice_hom}
	Let $E$ be an order complete complex Banach lattice and let $T$ be a lattice homomorphism on $E$. 
	\begin{enumerate}[(a)]
		\item If $e^{i\theta}$ ($\theta \in \bbR$) is an eigenvalue of $T$ with corresponding eigenvector $z \not= 0$, then $|z|$ is an eigenvector of $T$ for the eigenvalue $1$. \par 
		\item Let $\theta \in \bbR$. Then we have
			\begin{align*}
				\dim \ker(e^{i\theta} - T) \le \dim \ker(e^{in\theta} - T)
			\end{align*}
			for every $n \in \bbZ$.
	\end{enumerate}
\end{proposition}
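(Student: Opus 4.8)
The plan is to reduce everything to the structure of lattice homomorphisms on order complete complex Banach lattices. First I would recall the standard fact from Perron–Frobenius theory that if $T$ is a lattice homomorphism, then $|Tx| = T|x|$ for every $x \in E$. For part (a), given $Tz = e^{i\theta} z$ with $z \neq 0$, this immediately yields $T|z| = |Tz| = |e^{i\theta} z| = |z|$, so $|z| \in \ker(1-T)$ and $|z| \neq 0$ since $z \neq 0$. This is the easy half.

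For part (b), the main idea is to construct, for fixed $\theta$, an injective linear map from $\ker(e^{i\theta}-T)$ into $\ker(e^{in\theta}-T)$. The natural candidate uses the signum operator: for a nonzero $z \in \ker(e^{i\theta}-T)$, part (a) gives $T|z| = |z|$, and on the band generated by $|z|$ one can write $z = S|z|$ where $S$ is a signum operator (an appropriate diagonal-type lattice isometry with $|Sx| = |x|$ on that band). The candidate map is $z \mapsto S^n |z| \cdot (\text{something})$; more precisely, writing each eigenvector $z$ in the form $z = \operatorname{Re}(z) + i\operatorname{Im}(z)$ relative to its modulus, one defines $\Phi(z)$ by replacing the "phase" $S$ of $z$ by $S^n$. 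I would need to verify that $\Phi(z) \in \ker(e^{in\theta}-T)$: using that $T$ is a lattice homomorphism together with $T|z| = |z|$, one shows $TS^n|z| = e^{in\theta} S^n|z|$ by essentially the same computation that shows $TS|z| = e^{i\theta}S|z|$, iterated $n$ times (the key algebraic point is that $T$ intertwines the signum operator on the band, shifting the phase by $e^{i\theta}$ each time). Linearity and injectivity of $\Phi$ on $\ker(e^{i\theta}-T)$ then give the dimension inequality; for $n \le 0$ one argues symmetrically or uses that $S$ is invertible on the relevant band.

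The delicate point — and the main obstacle — is that $\ker(e^{i\theta}-T)$ may be more than one-dimensional, so there is no single common modulus $|z|$ and no single signum operator that simultaneously diagonalizes all eigenvectors. To handle this I would work band by band: pass to a suitable "maximal" fixed vector $u \in \ker(1-T)$ with $u \ge 0$ dominating (the moduli of) the eigenvectors one is comparing, restrict attention to the principal band $E_u$ generated by $u$, which by Kakutani's representation (using order completeness) is an order complete $C(K;\bbC)$-space with $u$ corresponding to $\mathbbm{1}$, and on which $T$ restricts to a Markov lattice homomorphism. There the signum operator is genuinely available and the phase-shift computation is transparent. One must check that this restriction does not lose any eigenvectors — i.e. that every $z \in \ker(e^{i\theta}-T)$ lives in such a band — which follows from part (a) applied to $z$ together with the order completeness of $E$ (so that the relevant suprema of moduli of eigenvectors exist and are again fixed vectors). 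Assembling these band-wise injections into a single injection on all of $\ker(e^{i\theta}-T)$ is the technical heart of the argument; everything else is routine once the $C(K)$-picture is set up.
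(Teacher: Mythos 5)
Your part (a) and the first half of part (b) match the paper: one passes to the principal ideal $E_{|z|}$, which is an AM-space with unit $|z|$ on which $T$ restricts to a Markov operator, and shows via the signum/phase operator that $z^{[n]}$ (the element obtained by raising the phase of $z$ to the $n$-th power) lies in $\ker(e^{in\theta}-T)$. The genuine gap is in how you extract the dimension estimate. The map $\Phi: z \mapsto z^{[n]}$ is \emph{not} linear for $|n| \neq 1$ --- it is not even homogeneous, since $(\lambda z)^{[n]} = (\lambda/|\lambda|)^n\,|\lambda|\, z^{[n]}$, and additivity fails as well (e.g.\ for $z_1, z_2$ with non-disjoint supports in a $C(K)$-picture). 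No amount of band-by-band assembly fixes this, because the failure of linearity already occurs within a single band and a single Kakutani representation: replacing the phase $S$ by $S^n$ is a pointwise nonlinear operation on the eigenspace. So the claim ``linearity and injectivity of $\Phi$ then give the dimension inequality'' does not go through, and this is precisely the step your plan leans on.

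The missing idea is that one does not need a linear injection at all: it suffices to show that the (nonlinear) map $z \mapsto z^{[n]}$ carries linearly independent families to linearly independent families. This is the content of Lemma~\ref{lem_dimension_estimate_complex_powers} in the paper. Given linearly independent $g_1,\dots,g_m \in \ker(e^{i\theta}-T)$, one sets $u := |g_1| + \dots + |g_m|$, represents $E_u$ as a $C(K;\bbC)$-space, and replaces $g_1,\dots,g_m$ by a basis $f_1,\dots,f_m$ of the same span together with points $x_1,\dots,x_m \in K$ such that $f_j(x_k) = \delta_{jk}$. Since $f_j^{[n]}$ is computed pointwise by $f_j^{[n]}(x) = (f_j(x)/|f_j(x)|)^n |f_j(x)|$, one still has $f_j^{[n]}(x_k) = \delta_{jk}$, so the vectors $f_1^{[n]},\dots,f_m^{[n]}$ are linearly independent in $\ker(e^{in\theta}-T)$, giving $m \le \dim\ker(e^{in\theta}-T)$. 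With this lemma in hand, your worry about a ``maximal'' dominating fixed vector is unnecessary: the paper treats each eigenvector $z$ in its own ideal $E_{|z|}$ to establish $z^{[n]} \in \ker(e^{in\theta}-T)$, and the dimension count is a separate, purely local argument.
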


For the proof we need the vectors $f^{[n]}$ which are given in Definition~\ref{def_lattice_powers_of_a_function} for every non-zero element $f$ of a complex Banach lattice. 

\begin{proof}[Proof of Proposition~\ref{prop_eigenvalues_and_funtions_of_lattice_hom}]
	Assertion (a) is obvious. To prove (b), let $0 \not= z \in \ker(e^{i\theta}-T)$. When endowed with an appropriate norm, the principal ideal $E_{|z|}$ is an AM-space with unit $|z|$ (see \cite[the corollary of Proposition~II.7.2]{Schaefer1974}). It follows from (a) that $E_{|z|}$ is $T$-invariant and that $T|_{E_{|z|}}$ is a Markov operator on $E_{|z|}$. We can thus conclude from \cite[the last sentence of Proposition V.4.2]{Schaefer1974} that $z^{[n]} \in \ker(e^{in\theta}-T)$ for each $n \in \bbZ$. Lemma~\ref{lem_dimension_estimate_complex_powers} now immediately yields assertion (b).  
\end{proof}

Next we will prove a new structure result on the point spectrum of certain positive operators. We need the following notation: A Banach lattice $E$ is said to have a \emph{pre-dual} Banach lattice if there exists a Banach lattice $F$ such that $F' = E$. If this is the case and if $T \in \calL(E)$, then we say that $T$ has a \emph{pre-adjoint} if there is an operator $S \in \calL(F)$ such that $S' = T$. Note that if $T$ is positive, then so is its pre-adjoint $S$. 

\begin{theorem} \label{thm_dominated_eigenvector_dual}
	Let $T$ be a positive operator on a complex Banach lattice $E$. Suppose that $E$ has a pre-dual Banach lattice and that $T$ has a pre-adjoint. Let $r > 0$, $\theta \in \bbR$ and let $0 < x \in \ker(r-T)$. Then we have
	\begin{align*}
		\dim[E_x \cap \ker(re^{i\theta}-T)] \le \dim[E_x \cap \ker(re^{in\theta}-T)]
	\end{align*}
	for all $n \in \bbZ$. In particular, if $re^{i\theta}$ is an eigenvalue of $T$ with eigenvector $0\not= z \in E_x$, then $re^{in\theta}$ is also an eigenvalue of $T$ with an eigenvector in $E_x$ for each $n \in \bbZ$.
\end{theorem}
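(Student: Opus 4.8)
The plan is to normalise $r=1$ (replacing $T$ by $r^{-1}T$, which leaves the pre-dual unchanged and only rescales the pre-adjoint) and then to pass to the principal ideal $E_x$. Since $Tx=x$ and $T\ge 0$, one has $|Ty|\le T|y|\le\|y\|_x\,Tx=\|y\|_x\,x$ for every $y\in E_x$, so $T$ leaves $E_x$ invariant and $T_0:=T|_{E_x}$ is a Markov operator on $E_x$, which, equipped with its gauge norm $\|\phdot\|_x$, is an AM-space with unit $x$. As $E=F'$ is Dedekind complete, so is the ideal $E_x$, and Kakutani's representation theorem identifies $(E_x,\|\phdot\|_x)$ lattice-isometrically with a space $C(K;\bbC)$ (with $K$ Stonean) in such a way that $x$ corresponds to $\mathbbm{1}$. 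It therefore suffices to prove $\dim\ker(e^{i\theta}-T_0)\le\dim\ker(e^{in\theta}-T_0)$ for every $n\in\bbZ$; the ``in particular'' assertion is then the one-dimensional case, and the general estimate follows by undoing the reduction.

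The pre-adjoint enters through weak-$*$ compactness. For $0\ne z\in\ker(e^{i\theta}-T_0)$ one has $|z|=|T_0^m z|\le T_0^m|z|\le\|z\|_x\,x$ for every $m$, so $(T_0^m|z|)_m$ is an increasing net bounded above by $\|z\|_x\,x$. Because $E=F'$ is a dual Banach lattice and $T=S'$ is $\sigma(E,F)$-continuous, such nets converge weak-$*$ to their suprema; hence $T_0^m|z|\to p_z:=\sup_m T_0^m|z|$ in $\sigma(E,F)$ and $T_0p_z=p_z$, so $p_z$ lies in the fixed space $F_0:=\ker(1-T_0)$, which by Corollary~\ref{cor_fixed_space_of_markov_op_real} is itself an order complete AM-space with unit. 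The same boundedness — the Ces\`aro means $\tfrac1N\sum_{m=0}^{N-1}\alpha^{-m}T_0^m y$ of any $y\in E_x$ all stay in the weak-$*$ compact interval $[-\|y\|_x\,x,\|y\|_x\,x]$ — together with the weak-$*$ continuity of $T$ shows that every weak-$*$ cluster point of these Ces\`aro means belongs to $\ker(\alpha-T_0)$, and a mean-ergodicity argument (for which the pre-adjoint is again used) upgrades this to a genuine positive projection $P_\alpha$ onto $\ker(\alpha-T_0)$.

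Finally, given $z\in\ker(e^{i\theta}-T_0)$ one forms the $n$-th complex power $z^{[n]}$ of Definition~\ref{def_lattice_powers_of_a_function}; it satisfies $|z^{[n]}|=|z|\le p_z$, so $z^{[n]}\in E_{p_z}\subseteq E_x$, and $P_{e^{in\theta}}(z^{[n]})$ is an element of $\ker(e^{in\theta}-T_0)$. In contrast to the lattice-homomorphism situation of Proposition~\ref{prop_eigenvalues_and_funtions_of_lattice_hom}, $z^{[n]}$ is \emph{not} itself an eigenvector of $T_0$ (a three-dimensional example shows this can genuinely fail), so the ergodic correction $P_{e^{in\theta}}$ is unavoidable; the heart of the proof — and the step I expect to be the main obstacle — is to control this correction, namely to show that $P_{e^{in\theta}}(z^{[n]})$ still retains the full information carried by $z^{[n]}$. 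Concretely I would prove that the band projection of $P_{e^{in\theta}}(z^{[n]})$ onto the band generated by $z$ equals $z^{[n]}$, using that band projections in the dual lattice $E$ and $T=S'$ are weak-$*$ continuous and that the orbit is order-dominated by $\|z\|_x\,x$. Granting this identity, the assignment $z\mapsto z^{[n]}$ maps any subspace $V\subseteq\ker(e^{i\theta}-T_0)$ into the band projection $W$ of $\ker(e^{in\theta}-T_0)$ onto the band generated by $V$, so Lemma~\ref{lem_dimension_estimate_complex_powers} yields $\dim V\le\dim W\le\dim\ker(e^{in\theta}-T_0)$; letting $V$ exhaust $\ker(e^{i\theta}-T_0)$ gives the asserted inequality.
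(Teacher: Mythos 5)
Your reduction to the principal ideal $E_x$ and the observation that $T|_{E_x}$ is a Markov operator on an AM-space with unit coincide with the paper's first step, and your use of weak-$*$ compactness (via the pre-adjoint) to produce the fixed point $p_z = w^*\text{-}\lim T^m|z|$ is sound. But the argument has a genuine gap exactly where you anticipate one: the identity ``the band projection of $P_{e^{in\theta}}(z^{[n]})$ onto the band generated by $z$ equals $z^{[n]}$'' is never proved, only granted, and it carries essentially the entire content of the theorem. Since $z^{[n]}$ is not an eigenvector of $T_0$, nothing prevents the ergodic average $P_{e^{in\theta}}(z^{[n]})$ from degenerating (a priori it could even vanish, or the map $z \mapsto P_{e^{in\theta}}(z^{[n]})$ could fail to preserve linear independence), and the tools you invoke --- weak-$*$ continuity of band projections and order-boundedness of the orbit --- do not obviously control it. A secondary error: for $\alpha = e^{in\theta} \neq 1$ the Ces\`aro means $\frac{1}{N}\sum_{m=0}^{N-1}\alpha^{-m}T_0^m$ have complex coefficients, so the resulting projection $P_\alpha$ is \emph{not} positive and the averages do not stay in the real order interval $[-\|y\|_x x, \|y\|_x x]$; positivity of $P_\alpha$ cannot be used later.

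The paper sidesteps the whole difficulty by a different device which you are missing. Choose integers $m_k \ge 2$ with $e^{im_k\theta} \to 1$ and set $S := w^*\text{-}\lim_{\calU} T^{m_k}|_{E_x}$ and $R := w^*\text{-}\lim_{\calU} T^{m_k-1}|_{E_x}$ along a free ultrafilter. Then $S$ and $R$ are Markov operators on $E_x$, every $z \in E_x \cap \ker(e^{i\theta}-T)$ satisfies $T^{m_k}z = e^{im_k\theta}z \to z$ and hence lies in $F := \ker(1-S)$, and $T|_F$ and $R|_F$ are mutually inverse positive operators on $F$, which by Theorem~\ref{thm_fixed_space_of_markov_op_real} and Corollary~\ref{cor_fixed_space_of_markov_op_real} is itself a complex Banach lattice. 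Thus $T|_F$ is a lattice \emph{isomorphism}, and Proposition~\ref{prop_eigenvalues_and_funtions_of_lattice_hom} applies directly: the powers $z^{[n]}$, computed in the lattice $F$, genuinely are eigenvectors, so no ergodic correction is needed and Lemma~\ref{lem_dimension_estimate_complex_powers} finishes the proof. The key idea is to enlarge the fixed-space trick from $|z|$ to $z$ itself by averaging along a subsequence of powers on which the eigenvalue winds back to $1$; without this (or a proof of your band-projection identity) the argument is incomplete.
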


Note that in this theorem we do not make any assumption on the spectral radius of $T$, i.e.~the theorem also holds if $r < r(T)$. We point out that the major advance in the theorem is the rather weak relation that we require between the eigenvector $z$ for the eigenvalue $re^{i\theta}$ and the eigenvector $x$ for the eigenvalue $r$. In the classical approach to Perron-Frobenius theory which can for example be found in \cite[Sections~V.4 and V.5]{Schaefer1974}, it is usually required that $|z| \in \ker(r-T)$ for some $z \in \ker(re^{i\theta}-T)$ to conclude that the numbers $re^{in\theta}$ ($n \in \bbZ$) are also eigenvalues of $T$. In the above theorem, it suffices that there are eigenvectors $0\not= z \in \ker(re^{i\theta}-T)$ and $0<x \in \ker(r-T)$ which fulfil the domination property $|z| \le x$. \par 
The following proof of Theorem~\ref{thm_dominated_eigenvector_dual} as well as many of its subsequent applications were inspired by \cite[Corollary~C-III.4.3]{Arendt1986}.

\begin{proof}[Proof of Theorem~\ref{thm_dominated_eigenvector_dual}]
	We may assume that $r=1$. With respect to an appropriate norm the principal ideal $E_x$ is an AM-space with unit $x$ (see \cite[the corollary of Proposition~II.7.2]{Schaefer1974}). The operator $T$ leaves $E_x$ invariant, and the restriction $T|_{E_x}$ is a Markov operator on $E_x$. Now, choose a sequence $(m_k)_{k \in \bbN}$ of integers $m_k \ge 2$ such that $e^{im_k\theta} \to 1$ and fix a free ultra filter $\calU$ on $\bbN$. For each $f \in E_x$, the sequence $(T^m f)_{m \in \bbN_0}$ is bounded in $E_x$ and thus in $E$. Therefore, the limits
	\begin{align*}
		Rf := w^*_E\text{-}\lim_\calU T^{m_k - 1}f \quad \text{and} \quad Sf := w^*_E\text{-}\lim_\calU T^{m_k}f
	\end{align*}
	exist for each $f \in E_x$, where $w^*_E\text{-}\lim_\calU$ denotes the limit in the weak${}^*$-topology on $E$ with respect to $\calU$. Moreover, both limits are again located in $E_x$. Thus, $R$ and $S$ are linear operators on the AM-space $E_x$. Clearly, $R$ and $S$ are Markov operators on $E_x$, and $E_x \cap \ker(e^{i\theta}-T)$ is contained in the fixed space of $S$ since for every $z \in E_x \cap \ker(e^{i\theta}-T)$ we have $T^{m_k}z = e^{im_k\theta}z \overset{\calU}{\to} z$ even with respect to the norm topology on $E$. \par
	Moreover, we have $R\,T|_{E_x} = T|_{E_x}R = S$, and the operator $S$ commutes with $T|_{E_x}$ and $R$; simply use the continuity of $T$ with respect to the weak${}^*$-topology to see that those assertions hold. Hence, $T|_{E_x}$ and $R$ leave the fixed space $F := \ker(1-S)$ invariant, and the restrictions $T|_F$ and $R|_F$ are inverse to each other. \par
	We know from Corollary~\ref{cor_fixed_space_of_markov_op_real} that $F$ is a complex Banach lattice with respect to an appropriate norm. The operator $T|_F$ and its inverse $R|_F$ are positive, so we conclude that $T|_F$ is a lattice isomorphism. For each $n\in\bbZ$, Proposition~\ref{prop_eigenvalues_and_funtions_of_lattice_hom} now yields that
	\begin{align*}
		\dim(E_x \cap \ker(e^{i\theta}-T)) & = \dim \ker(e^{i\theta}-T|_F) \le \\
		& \le \dim \ker(e^{in\theta}-T|_F) \le \dim[E_x \cap \ker(e^{in\theta}-T)] \text{.}
	\end{align*}
	This proves the assertion.  
\end{proof}

Recall that a Banach lattice $E$ can always be considered a sublattice of its bi-dual $E''$ by means of evaluation, see \cite[Corollary 2 of Proposition II.5.5]{Schaefer1974}. If $E$ has order-continuous norm, then it is even an ideal in $E''$, see \cite[Theorem~2.4.2]{Meyer-Nieberg1991}. This observation yields the following corollary.

\begin{corollary} \label{cor_dominated_eigenvector_order_cont_norm}
	Suppose that the complex Banach lattice $E$ has order continuous norm, and let $T$ be a positive operator on $E$. Let $r > 0$, $\theta \in \bbR$ and let $0 < x \in \ker(1-T)$. Then we have
	\begin{align*}
		\dim[E_x \cap \ker(re^{i\theta}-T)] \le \dim[E_x \cap \ker(re^{in\theta}-T)]
	\end{align*}
	for all $n \in \bbZ$. In particular, if $e^{i\theta}$ is an eigenvalue of $T$ with eigenvector $0 \not= z \in E_x$, then $e^{in\theta}$ is also an eigenvalue of $T$ with an eigenvector in $E_x$ for each $n \in \bbZ$.
\end{corollary}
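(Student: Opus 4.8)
The plan is to deduce the corollary from Theorem~\ref{thm_dominated_eigenvector_dual} by passing to the bidual. Recall that $E''$ is again a complex Banach lattice, that it has $E'$ as a pre-dual, and that the bi-adjoint $T'' \in \calL(E'')$ is positive and has the pre-adjoint $T' \in \calL(E')$ (whose own pre-adjoint is $T$). Under the canonical lattice embedding $E \hookrightarrow E''$ we have $T''|_E = T$, so the positive vector $x$, regarded as an element of $E''$, is an eigenvector of $T''$ for the same eigenvalue for which it is an eigenvector of $T$. Applying Theorem~\ref{thm_dominated_eigenvector_dual} to the operator $T''$ on $E''$ therefore yields
\[
	\dim\bigl[(E'')_x \cap \ker(re^{i\theta}-T'')\bigr] \;\le\; \dim\bigl[(E'')_x \cap \ker(re^{in\theta}-T'')\bigr]
\]
for every $n \in \bbZ$, where $(E'')_x$ denotes the principal ideal of $E''$ generated by $x$.

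What remains is to transport this inequality back to $E$, and this is the step where order-continuity of the norm is used. By the observation preceding the corollary, $E$ is an \emph{ideal} of $E''$ (see \cite[Theorem~2.4.2]{Meyer-Nieberg1991}), not merely a sublattice. Since $x \in E$, every $y \in E''$ with $|y| \le c\,x$ for some $c \ge 0$ already belongs to $E$; consequently $(E'')_x = \{y \in E'' : |y| \le c\,x \text{ for some } c \ge 0\}$ coincides with $E_x$. As $T''$ restricts to $T$ on $E$, and in particular on $(E'')_x = E_x$, we obtain $(E'')_x \cap \ker(\mu - T'') = E_x \cap \ker(\mu - T)$ for every $\mu \in \bbC$. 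Substituting this into the displayed inequality proves the asserted dimension estimate, and the ``in particular'' part follows at once: a non-zero eigenvector $z \in E_x$ for $e^{i\theta}$ makes the left-hand side positive, hence so is the right-hand side, and thus $e^{in\theta}$ has an eigenvector lying in $E_x$.

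I do not expect a genuine obstacle beyond this. The one point that really requires the hypothesis is the identification $(E'')_x = E_x$, which breaks down when $E$ is only a sublattice of $E''$: then one has merely $E_x \subseteq (E'')_x$ and cannot guarantee that the eigenvectors of $T''$ produced by Theorem~\ref{thm_dominated_eigenvector_dual} live in $E$. Everything else is routine — positivity of $T''$, the pre-adjoint relation $T'' = (T')'$, and the compatibility $T''|_E = T$ are all standard facts about adjoints and about the canonical embedding of a Banach lattice into its bidual.
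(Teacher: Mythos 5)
Your proposal is correct and follows exactly the paper's own route: the paper's proof is the one-line "apply Theorem~\ref{thm_dominated_eigenvector_dual} to the bi-adjoint $T''$; this yields the corollary since $E$ is an ideal in $E''$," and your argument simply spells out the details of that reduction, including the key identification $(E'')_x = E_x$ which is precisely where order-continuity of the norm enters.
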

\begin{proof}
	We apply Theorem~\ref{thm_dominated_eigenvector_dual} to the bi-adjoint $T''$. This yields the corollary since $E$ is an ideal in $E''$.  
\end{proof}

Corollary~\ref{cor_dominated_eigenvector_order_cont_norm} is a generalization of \cite[Theorem 3.4]{Scheffold1971}, where the same result (without the dimension estimate) was shown only for peripheral eigenvalues and only under some additional assumptions on $T$.

\section{(WS)-bounded operators} \label{section_ws_bounded_operators}

To obtain cyclicity results on the peripheral (point) spectrum of a positive operator $T$, it is often assumed in the literature that $T$ be \emph{Abel-bounded}, by which we mean that the set
\begin{align*}
	\{(r-r(T))R(r,T): r > r(T)\}
\end{align*}
is bounded in operator norm; see e.g.~\cite[Lemma~V.4.8 and Theorem~V.4.9]{Schaefer1974} for instances where this condition is of relevance. Alternatively, if $r(T) = 1$ we could impose the condition on $T$ that
\begin{align*}
	\liminf_{n \to \infty} ||T^n|| < \infty \text{;}
\end{align*}
this condition is for example used in \cite[Satz~3.6]{Scheffold1971}. In this section we develop a more general notion of boundedness which contains the two aforementioned examples as special cases and which turns out to be well-suited for the spectral analysis of positive operators. \par 
Our approach is based on the notion of \emph{weighting schemes} which will be introduced in the next definition. Let $\overline{\bbD}\subset \bbC$ be the closed unit disk. We call a function $f: \overline{\bbD} \to \bbC$ \emph{analytic} if $f$ has an analytic extension to some open neighbourhood of $\overline{\bbD}$.

\begin{definition} \label{def_properties_and_weighting_scheme}
	Let $f$ be an analytic function on $\overline{\bbD}$ and let $(f_j)$ be a net of analytic functions on $\overline{\bbD}$. Consider the following conditions:
	\begin{description}
		\item[(WS1)] We have $f(1) = 1$. \par 
		\item[(WS2)] We have $f^{(k)}(0) \ge 0$ for each $k \in \bbN_0$. \par 
		\item[(WS3)] For each $z \in \bbC$, $|z| < 1$, we have $f_j(z) \to 0$.
	\end{description}
	The net $(f_j)$ is called a \emph{weighting scheme} if it fulfils the condition (WS3) and if each function $f_j$ fulfils the conditions (WS1) and (WS2).
\end{definition}

Our motivation to call such nets $(f_j)$ in Definition~\ref{def_properties_and_weighting_scheme} \emph{weighting schemes} will become apparent before Definition~\ref{def_ws_bounded} below. The following remark is obvious, but we state it explicitly for later reference.

\begin{remark} \label{rem_subnet_of_weighting_scheme_is_weighting_scheme}
	Every subnet of a weighting scheme is itself a weighting scheme.
\end{remark}

If analytic functions $f_1,...,f_n: \overline{\bbD} \to \bbC$ fulfil the condition (WS1), then of course their product fulfils (WS1) as well. Similarly, if $f_1,...,f_n$ fulfil (WS2), then it follows from the product rule for differentiation that their product also fulfils (WS2). \par 
In fact, we are not merely interested in analytic functions on $\overline{\bbD}$ themselves, but also in the coefficients of their power series expansions around $0$. Hence, we shall now explain how the conditions (WS1)-(WS3) can be expressed by means of those coefficients.

\begin{remark} \label{remark_on_two_properties_of_analytic_functions}
	Let $f$ be an analytic function on $\overline{\bbD}$ and let $f(z) = \sum_{k=0}^\infty a_kz^k$ be the power series expansion of $f$ around $0$. Then the following properties hold true:
	\begin{enumerate}[(a)]
		\item The function $f$ fulfils (WS1) if and only if $\sum_{k=0}^\infty a_k = 1$. \par 
		\item The function $f$ fulfils (WS2) if and only if $a_k \ge 0$ for each $k \in \bbN_0$. \par 
		\item If $f$ fulfils (WS1) and (WS2), then we have $|f(z)| \le 1$ for each $z \in \overline{\bbD}$. If, in addition, $f$ is not constant, then we even have $|f(z)| < 1$ whenever $|z| < 1$.
	\end{enumerate}
\end{remark}
\begin{proof}
	Assertions (a) and (b) are clear; assertion (c) follows from (a) and (b).  
\end{proof}

\begin{proposition} \label{prop_weighting_scheme_conv_of_coefficients}
	Let $(f_j)$ be a net of analytic functions on $\overline{\bbD}$ and suppose that each function $f_j$ fulfils (WS1) and (WS2). Moreover, for each $j$ let $f_j(z) = \sum_{k=0}^\infty a_{j,k}z^k$ be the power series expansion of $f_j$ around $0$. Then the following assertions are equivalent:
	\begin{enumerate}[(i)]
		\item The net $(f_j)$ is a weighting scheme, i.e. $f_j(z) \to 0$ whenever $z \in \bbC$, $|z| < 1$. \par 
		\item For each $k \in \bbN_0$ we have $a_{j,k} \overset{j}{\to} 0$.
	\end{enumerate}
\end{proposition}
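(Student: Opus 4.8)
The plan is to prove the equivalence of (i) and (ii) by exploiting the non-negativity of the coefficients, which turns the pointwise convergence on $\overline{\bbD}$ into something much more rigid. The key structural fact, coming from Remark~\ref{remark_on_two_properties_of_analytic_functions}, is that $\sum_{k=0}^\infty a_{j,k} = f_j(1) = 1$ and all $a_{j,k} \ge 0$; so for each $j$ the sequence $(a_{j,k})_{k \in \bbN_0}$ is a probability distribution on $\bbN_0$. Thus condition (ii) says that this family of probability measures converges to the zero measure coefficient-wise, and condition (i) says that the corresponding generating functions tend to $0$ pointwise on the open disk.

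For the direction (i)$\Rightarrow$(ii) I would argue as follows. Fix $k \in \bbN_0$ and fix a radius $\rho \in (0,1)$. Since $a_{j,m} \ge 0$ for all $m$ and $\sum_m a_{j,m} = 1$, the Cauchy-type estimate $a_{j,k}\rho^k \le \sum_{m=0}^\infty a_{j,m}\rho^m = f_j(\rho)$ holds (every term on the right is non-negative and the $k$-th term alone is $a_{j,k}\rho^k$). Hence $a_{j,k} \le \rho^{-k} f_j(\rho)$. By (i) we have $f_j(\rho) \to 0$, so $\limsup_j a_{j,k} \le 0$, and since $a_{j,k} \ge 0$ this forces $a_{j,k} \to 0$. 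Note this only needed evaluation at a single interior point $\rho$, and only the "tame" side of the coefficients (non-negativity plus the value at $1$).

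For the converse (ii)$\Rightarrow$(i), fix $z \in \bbC$ with $|z| < 1$ and write $\rho = |z| < 1$. Split the sum at an index $N$: $|f_j(z)| \le \sum_{k=0}^{N-1} a_{j,k}\rho^k + \sum_{k=N}^\infty a_{j,k}\rho^k \le \sum_{k=0}^{N-1} a_{j,k} + \rho^N\sum_{k=N}^\infty a_{j,k} \le \sum_{k=0}^{N-1} a_{j,k} + \rho^N$, where in the last step I again used $\sum_k a_{j,k} = 1$ together with $a_{j,k} \ge 0$ to bound the tail mass by $1$. Given $\varepsilon > 0$, first choose $N$ with $\rho^N < \varepsilon/2$; this $N$ is now fixed. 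Then by (ii) each of the finitely many nets $(a_{j,k})_j$, $k = 0,\dots,N-1$, tends to $0$, so there is an index $j_0$ beyond which $\sum_{k=0}^{N-1} a_{j,k} < \varepsilon/2$. For $j \succeq j_0$ we get $|f_j(z)| < \varepsilon$, which is exactly the statement that $f_j(z) \to 0$, i.e.\ (WS3).

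I do not anticipate a serious obstacle here; the argument is a standard tightness/uniform-tail bound made possible entirely by the sign condition (WS2) and the normalization (WS1). The one point to be a little careful about is that we work with nets rather than sequences, so in the (ii)$\Rightarrow$(i) step the choice of $j_0$ must be made after $N$ is fixed, using that a finite collection of nets converging to $0$ is eventually uniformly small (take the maximum of the finitely many indices in the directed set). Everything else is the elementary estimate $a_{j,k}\rho^k \le f_j(\rho)$ and splitting a non-negative series at a finite index.
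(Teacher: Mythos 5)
Your proof is correct, and in the direction (i)$\Rightarrow$(ii) it takes a genuinely different and more elementary route than the paper. The paper uses complex-analytic machinery: from Remark~\ref{remark_on_two_properties_of_analytic_functions}(c) each $f_j$ is bounded by $1$ on $\overline{\bbD}$, so the pointwise convergence upgrades to uniform convergence on compact subsets of the open disk, and then Cauchy's integral formula gives $f_j^{(k)}(0)\to 0$. You instead exploit positivity of the coefficients directly: since all terms of $\sum_m a_{j,m}\rho^m$ are non-negative, the single inequality $a_{j,k}\rho^k \le f_j(\rho)$ for one fixed $\rho\in(0,1)$ already forces $a_{j,k}\to 0$. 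This avoids the normal-families/Vitali-type step entirely (which, for nets rather than sequences, the paper asserts without comment) and only uses evaluation at one real point; the price is that your argument is specific to series with non-negative coefficients, whereas the paper's route would survive without (WS2) as long as a uniform bound on $\overline{\bbD}$ were available. Your (ii)$\Rightarrow$(i) direction is essentially the paper's argument, with a cosmetic difference in how the tail is controlled: you bound $\sum_{k\ge N}a_{j,k}\rho^k \le \rho^N\sum_k a_{j,k}=\rho^N$ using (WS1), while the paper bounds it by $\sum_{k\ge N}\rho^k$ using $a_{j,k}\le 1$; both give the required uniform-in-$j$ tail estimate, and your handling of the order of quantifiers (fix $N$ first, then choose $j_0$ using finitely many coefficient nets) is exactly right.
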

\begin{proof}
	``(i) $\Rightarrow$ (ii)'' By Remark~\ref{remark_on_two_properties_of_analytic_functions}(c) each function $f_j$ is bounded by $1$ on $\overline{\bbD}$. Hence, the functions $f_j$ converge to $0$ uniformly on compact subsets of the open unit disk. It then follows from Cauchy's integral formula that $f_j^{(k)}(0) \overset{j}{\to} 0$ for each $k \in \bbN_0$. Since $a_{j,k} = \frac{f_j^{(k)}(0)}{k!}$, this implies (ii). \par 
	``(ii) $\Rightarrow$ (i)'' Let $|z| < 1$, let $\varepsilon > 0$ and choose $k_0 \in \bbN$ such that $\sum_{k=k_0}^{\infty} |z|^k < \frac{\varepsilon}{2}$. For all sufficiently large $j$, say $j \ge j_0$, we have $a_{j,k} \le \frac{\varepsilon}{2k_0}$ for $k=0,...,k_0-1$. Using that all coefficients $a_{j,k}$ are at most $1$, we conclude for all $j \ge j_0$ that
	\begin{align*}
		|f_j(z)| \le \sum_{k=0}^{k_0-1} a_{j,k} + \sum_{k=k_0}^\infty |z|^k \le \varepsilon \text{.}
	\end{align*}
Hence, $f_j(z) \to 0$.  
\end{proof}

One way to obtain examples of weighting schemes is to consider powers of a fixed function:

\begin{proposition} \label{prop_powers_as_weighting_schemes}
	Let $f$ be a non-constant analytic function on $\overline{\bbD}$ which fulfils (WS1) and (WS2). Then the sequence $(f^j)_{j \in \bbN_0}$ is a weighting scheme.
\end{proposition}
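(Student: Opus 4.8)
The plan is to verify the three defining properties of a weighting scheme for the sequence $(f^j)_{j \in \bbN_0}$. Conditions (WS1) and (WS2) are immediate: since $f$ fulfils (WS1), i.e.\ $f(1) = 1$, the product rule (or just evaluation) gives $f^j(1) = f(1)^j = 1$ for every $j$; and since $f$ fulfils (WS2), the remark immediately preceding Proposition~\ref{prop_powers_as_weighting_schemes} (that products of functions satisfying (WS2) again satisfy (WS2)) shows that each $f^j$ satisfies (WS2) as well. So the only real content is to check (WS3), i.e.\ that $f^j(z) \to 0$ for every $z$ with $|z| < 1$.

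For (WS3), the key observation is Remark~\ref{remark_on_two_properties_of_analytic_functions}(c): because $f$ satisfies (WS1) and (WS2) \emph{and} is non-constant, we have the strict inequality $|f(z)| < 1$ for every $z$ in the open unit disk. Fix such a $z$. Then $|f^j(z)| = |f(z)|^j$, and since $0 \le |f(z)| < 1$ this tends to $0$ as $j \to \infty$. Hence $f^j(z) \to 0$, which is exactly (WS3).

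Putting these together, $(f^j)_{j \in \bbN_0}$ is a net (indexed by $\bbN_0$) of analytic functions on $\overline{\bbD}$, each satisfying (WS1) and (WS2), and the whole net satisfies (WS3); by Definition~\ref{def_properties_and_weighting_scheme} it is therefore a weighting scheme. I do not anticipate any genuine obstacle here — the statement is essentially a bookkeeping corollary of Remark~\ref{remark_on_two_properties_of_analytic_functions}(c), with the non-constancy hypothesis being exactly what is needed to upgrade $|f(z)| \le 1$ to the strict bound that makes the powers decay. The only point to be mildly careful about is that each $f^j$ is again analytic on $\overline{\bbD}$ (it extends analytically to a neighbourhood of $\overline{\bbD}$ because $f$ does, and products of analytic functions are analytic), but this is routine.
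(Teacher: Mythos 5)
Your proof is correct and follows the same route as the paper: (WS1) and (WS2) pass to powers by the product observations, and (WS3) follows from the strict bound $|f(z)|<1$ on the open disk supplied by Remark~\ref{remark_on_two_properties_of_analytic_functions}(c), which is exactly where the non-constancy hypothesis is used. You have merely written out the details the paper leaves implicit.
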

\begin{proof}
	Clearly, all powers $f^j$ fulfil the conditions (WS1) and (WS2). It follows from Remark~\ref{remark_on_two_properties_of_analytic_functions}(c) that the sequence $(f^j)_{j \in \bbN_0}$ also fulfils (WS3).  
\end{proof}

Since we have now proved some elementary facts about weighting schemes, it is time to give a few concrete examples of them.

\begin{examples} \label{examp_weighting_schemes}
	The following sequences $(f_j)$ of analytic functions $f_j: \overline{\bbD} \to \bbC$ are weighting schemes:
	\begin{enumerate}[(a)]
		\item $f_j(z) = z^j$, $j \in \bbN_0$. \par 
		\item $f_j(z) = \frac{\lambda_j - 1}{\lambda_j - z}$, where $(\lambda_j)_{j\in \bbN}$ is a sequence in $(1,\infty)$ which converges to $1$. \par 
		\item $f_j(z) = \big( \frac{\lambda - 1}{\lambda - z}\big)^j$, $j \in \bbN_0$, where $\lambda > 1$ is a fixed real number. \par 
		\item $f_j(z) = \frac{1}{j}\sum_{k=0}^{j-1} z^k$, $j \in \bbN$. \par 
		\item $f_j(z) = e^{t_j(z-1)}$, where $(t_j)_{j \in \bbN}$ is a sequence in $[0,\infty)$ which converges to $\infty$.
	\end{enumerate}
\end{examples}
\begin{proof}
	(a) This follows from Proposition~\ref{prop_powers_as_weighting_schemes} since the function $z \mapsto z$ is non-constant and clearly satisfies conditions (WS1) and (WS2). \par 
	(b) Clearly, each function $f_j$ satisfies (WS1). Moreover, we can use the geometric series to compute the power series expansion of $f_j$ around $0$; it then follows from Remark~\ref{remark_on_two_properties_of_analytic_functions}(b) that each $f_j$ also fulfils (WS2). Condition (WS3) is obviously fulfilled. \par 
	(c) The function $z \mapsto \frac{\lambda - 1}{\lambda - z}$ clearly fulfils (WS1); computing its power series expansion around $0$ and using Remark~\ref{remark_on_two_properties_of_analytic_functions}(b) we see that it also fulfils (WS2). Hence the assertion follows from Proposition~\ref{prop_powers_as_weighting_schemes}. \par 
	(d) Clearly, each function $z \mapsto \frac{1}{j}\sum_{k=0}^{j-1} z^k$ fulfils condition (WS1) and, due to part (b) of Remark~\ref{remark_on_two_properties_of_analytic_functions}, also (WS2). Proposition~\ref{prop_weighting_scheme_conv_of_coefficients} now yields that (WS3) is also fulfilled. \par 
	(e) For this sequence, all three conditions (WS1), (WS2) and (WS3) follow from well-known properties of the exponential function.  
\end{proof}

Suppose that $f: \overline{\bbD} \to \bbC$ is an analytic function. If $T$ is a bounded linear operator on a complex Banach space $X$ with spectral radius $r(T) = 1$, then the analytic functional calculus $f(T)$ is well-defined; in fact, we have
\begin{align*}
	f(T) = \sum_{k=0}^\infty a_k T^k
\end{align*}
where $f(z) = \sum_{k=0}^\infty a_kz^k$ is the power series expansion of $f$ around $0$. If $f$ fulfils (WS1) and (WS2), then Remark~\ref{remark_on_two_properties_of_analytic_functions} says that the coefficients $a_k$ are non-negative and that they sum up to $1$; hence, $f(T)$ is a weighted sum of the power $T^k$, $k \in \bbN_0$. If $(f_j)$ is a weighting scheme, then each operator $f_j(T)$ is such a weighted sum and condition (WS3) means that the powers $T^k$ with small exponent $k$ become less important as $j$ increases (see Proposition~\ref{prop_weighting_scheme_conv_of_coefficients}(ii)). The net of operators $(f_j(T))$ which we obtain by applying a weighting scheme $(f_j)$ to an operator $T$ might remind the reader of the notion of so-called Lotz-R\"abiger nets, see e.g.~\cite[Definition~1.2]{Emelyanov2007}. However, the relation between weighting schemes and Lotz-R\"abiger nets does not seem to be clear: For example, let $T$ be power-bounded, $r(T) = 1$. Then $(f_j(T))$ is not a Lotz-R\"abiger net in general if $f_j(z) = z^j$ for $j \in \bbN_0$; however, $(f_j(T))$ is a Lotz-R\"abiger net if $f_j(z) = \frac{1}{j}\sum_{k=0}^{j-1}z^k$ for $j \in \bbN$. We shall not pursue a detailed analysis of this here.

We are now ready to introduce the boundedness notion for linear operators that we mentioned at the beginning of the section.

\begin{definition} \label{def_ws_bounded}
	Let $X$ be a complex Banach space and let $T \in \calL(X)$, $r(T) = 1$. The operator $T$ is called \emph{(WS)-bounded} if there is a weighting scheme $(f_j)_{j \in J}$ such that the set $\{f_j(T): j \in J\}$ is bounded in operator norm.
\end{definition}

A related concept called \emph{P-boundedness} can be found at the end of \cite{Bernau1990}. Note that the notion of (WS)-boundedness of an operator $T$ only makes sense if $r(T) = 1$. Indeed, if $r(T) < 1$, then it follows from Proposition~\ref{prop_weighting_scheme_conv_of_coefficients}(ii) that $f_j(T) \to 0$ for every weighting scheme $(f_j)_{j \in J}$; therefore, $\{f_j(T): j \in J\}$ is bounded whenever the weighting scheme $(f_j)_{j \in J}$ is a sequence. On the other hand, if $r(T) > 1$, then $f_j(T)$ might not even be defined. In order to extend the notion of (WS)-boundedness to general operators $T$ with $r(T) > 0$ we could of course call $T$ (WS)-bounded if $\frac{T}{r(T)}$ is so. However, we prefer to simply state all our results on (WS)-bounded operators under the assumption that $r(T) = 1$. The reader can then easily obtain the general case by a rescaling argument. \par
In Examples~\ref{examp_weighting_schemes} we listed several weighting schemes. We can now use those examples to give several sufficient conditions for an operator to be (WS)-bounded.

\begin{examples} \label{examp_ws_bounded}
	Let $X$ be a complex Banach space and let $T \in \calL(X)$, $r(T) = 1$. Each of the following conditions is sufficient for $T$ to be (WS)-bounded:
	\begin{enumerate}[(a)]
		\item $T$ is power-bounded or, more generally, $\liminf_{j \to \infty} ||T^j|| < \infty$. \par 
		\item $T$ is Abel-bounded or, more generally, $\liminf_{\lambda \downarrow 1} (\lambda-1)||R(\lambda,T)|| < \infty$. \par 
		\item There is a $\lambda > 1$ such that $\liminf_{j \to \infty}||[(\lambda-1)R(\lambda,T)]^j|| < \infty$. \par 
		\item $T$ is Ces\`{a}ro-bounded or, more generally, $\liminf_{j \to \infty} ||\frac{1}{j}\sum_{k=0}^{j-1} T^k|| < \infty$. \par 
		\item We have $\liminf_{t \to \infty} ||e^{t(T-1)}|| < \infty$.
	\end{enumerate}
\end{examples}
\begin{proof}
	This is immediate from Examples~\ref{examp_weighting_schemes} and Remark~\ref{rem_subnet_of_weighting_scheme_is_weighting_scheme}.  
\end{proof}

By Example~\ref{examp_ws_bounded}(a) a power-bounded operator is always (WS)-bounded. In fact, power-boundedness is the most elementary special case of (WS)-boundedness, and one can easily see that if an operator $T \in \calL(X)$ on a complex Banach space $X$ is power-bounded, then the set $\{f_j(T): j \in J\} \subset \calL(X)$ is actually bounded for each weighting scheme $(f_j)_{j \in J}$.

Let us also discuss how (WS)-boundedness is related to other boundedness conditions which appear in Examples~\ref{examp_ws_bounded}: Apparently, the notion of (WS)-boundedness is weaker then other well-known conditions such as Abel-boundedness or Ces\`{a}ro-boundedness. We should point out the each Ces\`{a}ro-bounded operator is automatically Abel-bounded (see \cite[Theorem~1.7]{Emilion1985}) and that for positive operators on Banach lattices, also the converse holds true (see \cite[Paragraph~1.5]{Emilion1985}). Therefore, it is natural to ask whether for positive operators $T$ the boundedness of the set $\{f_j(T):j \in J\}$ for \emph{one} weighting scheme $(f_j)_{j \in J}$ implies that the set is automatically bounded for \emph{every} weighting scheme $(f_j)_{j \in J}$. In fact this is not true even for lattice homomorphisms. In \cite[Section~2]{Derriennic1973} one can find an example of a lattice isomorphism $T$ on an $L^1$-space which is Ces\`{a}ro-bounded, but not power-bounded. Conversely we now give an example of a lattice homomorphism $T$ on an $L^1$-space such that the sequence of its Ces\`{a}ro means has no bounded subsequence, but such that $\liminf_{j \to \infty} ||T^j|| < \infty$.

\begin{example} \label{examp_subsequence_of_powers_and_of_cesaro_sums}
	There is an $L^1$-space $E$ and a lattice homomorphism $T \in \calL(E)$, $r(T) = 1$, such that $\liminf_{j \to \infty} ||T^j|| < \infty$, but $\lim_{j \to \infty} ||\frac{1}{j}\sum_{k=0}^{j-1} T^k|| = \infty$. To construct such an example, we consider for each $m \in \bbN$ the sequence $a^{(m)} = (a_l^{(m)})_{l \in \bbN} \in l^\infty(\bbN;\bbC)$ which is given by
	\begin{align*}
		a_l^{(m)} = 
		\begin{cases}
			2^{\frac{1}{(m-1)!}} \quad & \text{if } 1 \le l < m! \\
			\frac{1}{2^m} \quad & \text{if } l = m! \\
			1 \quad & \text{if } m! < l \text{.}
		\end{cases}
	\end{align*}
	Let $S$ be the right shift on $l^1 := l^1(\bbN;\bbC)$; moreover, for each $m \in \bbN$ we denote by $M_m$ the multiplication operator on $l^1$ with symbol $a^{(m)}$ and we define $T_m := SM_m$. Let $E = l^1(\bbN;l^1) \simeq l^1(\bbN \times \bbN;\bbC)$ and $\calL(E) \ni T := \oplus_{m\in\bbN} T_m$. Note that $T$ indeed maps $E$ to $E$ since $||T_m|| = ||M_m|| \le 2$ for each $m \in \bbN$. Clearly, $T$ is a lattice homomorphism. \par	
	To show that $T$ has the other claimed properties, let us analyse the powers of each operator $T_m$: As somewhat tedious computation shows that $T_m^j = S^j \tilde M_{m,j}$ for every $j \in \bbN_0$, where $\tilde M_{m,j}$ is the multiplication operator whose symbol $\tilde a^{(m,j)} = (\tilde a_l^{(m,j)})_{l \in \bbN} \in l^\infty(\bbN;\bbC)$ is given by
	\begin{align*}
		\tilde a_l^{(m,j)} =
		\begin{cases}
			2^{\frac{j}{(m-1)!}} \quad & \text{if } 1 \le l \le m! - j \\
			2^{\frac{m! - l}{(m-1)!} - m} \quad & \text{if } m! - j < l \le m! \\
			1 \quad & \text{if }  m! < l \text{.}
		\end{cases}
	\end{align*}
	Note that in the second of the above formulas we always have $2^{\frac{m! - l}{(m-1)!} - m} \le 1$. \par 
	In order to prove that $\liminf_{j \to \infty} ||T^j|| < \infty$ we define $j_h := h!$ for all $h \in \bbN$ and we will now show that $||T_m^{j_h}|| \le 2$ for all $m,h \in \bbN$: If $h \ge m$, then the first case in the above formula for $\tilde a_l^{(m,j_h)}$ never occurs and we conclude that $||\tilde M_{m,j_h}||\le 1$. If $h < m$, then the first case in the above formula for $\tilde a_l^{(m,j_h)}$ fulfils the estimate $\tilde a_l^{(m,j_h)} = 2^{\frac{j_h}{(m-1)!}} \le 2^{\frac{(m-1)!}{(m-1)!}} = 2$. Hence, we have $||\tilde M_{m,j_h}|| \le 2$ in this case. To sum up, we obtain $||T_m^{j_h}|| = ||\tilde M_{m,j_h}|| \le 2$ for all $m,h \in \bbN$ and this implies that $||T^{j_h}|| \le 2$ for each $h \in \bbN$. Thus, $\liminf_{j \to \infty} ||T^j|| < \infty$. \par
	It remains to show that the norm of the Ces\`{a}ro sums of $T$ converges to $\infty$. To this end, let $e_1 = (1,0,0,...) \in l^1$. If $m,j \in \bbN$ such that $m! \le j < (m+1)!$, then we have
	\begin{align*}
		& ||\frac{1}{j}\sum_{k=0}^{j-1} T_m^k e_1||_{l^1} \ge ||\frac{1}{(m+1)!}\sum_{k=0}^{m!-1}T_m^ke_1||_{l^1} = \frac{1}{(m+1)!}\sum_{k=0}^{m!-1} 2^{\frac{k}{(m-1)!}} = \\
		& = \frac{2^m - 1}{(m+1)!(2^{\frac{1}{(m-1)!}}-1)} = \frac{2^m - 1}{(m+1)m} \cdot \frac{1}{(m-1)! (2^{\frac{1}{(m-1)!}} - 1)} =: c(m) \text{.}
	\end{align*}
	From the well-known convergence result $\lim_{n \to \infty} (1+\frac{1}{n})^n = e > 2$ one easily derives that $c(m) \ge \frac{2^m - 1}{(m+1)m}$ for all sufficiently large $m$. Hence, $c(m) \to \infty$ as $m \to \infty$. \par 
	Now, for any constant $C > 0$ we can find a number $m_0\in \bbN$ such that $c(m) \ge C$ for all $m \ge m_0$. If $j \ge m_0!$ we may choose $m \ge m_0$ such that $m! \le j < (m+1)!$ and thus we obtain
	\begin{align*}
		||\frac{1}{j}\sum_{k=0}^{j-1} T^k|| \ge ||\frac{1}{j}\sum_{k=0}^{j-1} T_m^k|| \ge c(m) \ge C \text{.}
	\end{align*}
	So we have indeed $\lim_{j \to \infty} ||\frac{1}{j}\sum_{k=0}^{j-1}T^k|| = \infty$.
\end{example}

Example~\ref{examp_subsequence_of_powers_and_of_cesaro_sums} and the above mentioned example in \cite[Section~2]{Derriennic1973} show that (WS)-bound\-ed\-ness of a positive operator $T$ does not imply that $\{f_j(T): j  \in J\}$ is bounded for each weighting scheme $(f_j)_{j \in J}$. However, the situation is different for compact positive operators; this follows from Corollary~\ref{cor_pos_only_poles_and_spec_circle_ws_bounded} below which in turn is a consequence of the next proposition. Recall that a spectral value $\lambda_0$ of an operator $T$ on a complex Banach space $X$ is called an \emph{$m$-th order pole of the resolvent} (where $m \in \bbN$) if $\lambda_0$ is an isolated point in the spectrum $\sigma(T)$ and if the analytic mapping
\begin{align*}
	\bbC \setminus \sigma(T) \to \calL(X) \text{,} \quad \lambda \mapsto R(\lambda,T)
\end{align*}
has a pole of order $m$ at $\lambda_0$. 

\begin{proposition} \label{prop_spec_rad_pol_of_resolvent_and_ws_bounded}
	Let $X$ be a complex Banach space, $T \in \calL(X)$ and suppose that $r(T) = 1$ is a spectral value of $T$ and an $m$-th order pole of the resolvent, $m \in \bbN$. Then $T$ is (WS)-bounded if and only if $m = 1$. 
\end{proposition}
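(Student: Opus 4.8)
The plan is to analyze the Laurent expansion of $R(\lambda,T)$ near $\lambda=1$ and feed it into the functional calculus. Write the Laurent expansion as
\begin{align*}
	R(\lambda,T) = \sum_{k=1}^{m} \frac{N^{k-1}P}{(\lambda-1)^k} + (\text{holomorphic part near } 1)\text{,}
\end{align*}
where $P$ is the spectral (Riesz) projection associated with the isolated spectral value $1$ and $N = (T-1)P$ is the corresponding nilpotent with $N^m = 0$, $N^{m-1} \neq 0$ (since the pole has order exactly $m$). I would carry out the argument in two directions.

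For the ``if'' direction ($m=1$): when $m=1$ the projection $P$ commutes with $T$, and $T$ decomposes as $1$ on the range of $P$ (which is one-dimensional in the eigenvalue sense — more precisely $TP = P$) and an operator on $\ker P$ whose spectrum is $\sigma(T)\setminus\{1\}$, hence does not meet a neighbourhood of $1$. Concretely, for any weighting scheme $(f_j)$ one has $f_j(T) = f_j(T)P + f_j(T)(1-P)$; on $\operatorname{ran}P$ the operator $T$ acts as the identity so $f_j(T)P = f_j(1)P = P$ by (WS1), which is bounded in $j$; on $\ker P$ the operator $T|_{\ker P}$ has spectral radius strictly less than $1$ (because $1$ was isolated in $\sigma(T)$ and $r(T)=1$), so by Proposition~\ref{prop_weighting_scheme_conv_of_coefficients}(ii) together with the norm bound $\|f_j(T|_{\ker P})\| \le \sum_k a_{j,k}\,\|T|_{\ker P}^k\|$ and $\sum_k a_{j,k}=1$ with $r(T|_{\ker P})<1$, one gets $f_j(T)(1-P) \to 0$. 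Hence the choice, e.g., $f_j(z)=z^j$ exhibits $\{f_j(T):j\}$ as bounded, so $T$ is (WS)-bounded. (One must be slightly careful that $1$ need not be an eigenvalue of geometric multiplicity tied to $P$ being rank one; but $P$ is a bounded projection and the estimate on $\operatorname{ran}P$ only uses $TP=P$, which holds when the pole is simple, and the estimate on $\ker P$ only uses the spectral radius, so no rank assumption is needed.)

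For the ``only if'' direction (if $m\ge 2$ then $T$ is not (WS)-bounded): suppose $(f_j)$ is any weighting scheme and expand $f_j(T)$. Using the Cauchy integral representation of the functional calculus over a contour surrounding $\sigma(T)$, split the contour into a small circle $\gamma$ around $1$ and the rest. Near $1$ we pick up, via the Laurent expansion above and the identity $f_j(z) = f_j(1) + f_j'(1)(z-1) + \cdots$ together with $\frac{1}{2\pi i}\int_\gamma f_j(\lambda)\,(\lambda-1)^{-k}\,d\lambda = \frac{f_j^{(k-1)}(1)}{(k-1)!}$, the contribution
\begin{align*}
	\sum_{k=1}^{m} \frac{f_j^{(k-1)}(1)}{(k-1)!}\, N^{k-1}P = f_j(1)P + f_j'(1)NP + \cdots + \frac{f_j^{(m-1)}(1)}{(m-1)!}N^{m-1}P\text{.}
\end{align*}
The key claim is that $f_j'(1) \to \infty$ (in modulus, necessarily to $+\infty$) for every weighting scheme. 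Indeed, $f_j'(1) = \sum_k k\, a_{j,k}$ with $a_{j,k}\ge 0$ and $\sum_k a_{j,k}=1$; if $(f_j'(1))_j$ had a bounded subnet, then the ``mass'' $\sum_k a_{j,k}$ would stay concentrated on bounded $k$ along that subnet, contradicting (WS3)/Proposition~\ref{prop_weighting_scheme_conv_of_coefficients}(ii), which forces each $a_{j,k}\to 0$ while the total mass is always $1$ — a standard escape-to-infinity argument. Hence $\|f_j(T)\| \ge \|P\|^{-1}\|f_j(T)P\|$ and, isolating the top nilpotent term by applying $N^{m-2}$ (using $N^mP=0$ and $N^{m-1}P\neq 0$), one sees $N^{m-2}f_j(T)P = f_j'(1)N^{m-1}P + (\text{bounded})$, which forces $\|f_j(T)\|\to\infty$. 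Therefore no weighting scheme can keep $\{f_j(T):j\}$ bounded, so $T$ is not (WS)-bounded when $m\ge 2$.

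The main obstacle I expect is making the ``only if'' contour/Laurent bookkeeping fully rigorous when $m\ge 2$: one has to be careful that the residue computation $\frac{1}{2\pi i}\int_\gamma f_j(\lambda)(\lambda-1)^{-k}d\lambda = f_j^{(k-1)}(1)/(k-1)!$ is legitimate (it is, since $f_j$ is analytic on a neighbourhood of $\overline{\bbD}\ni 1$), and that the ``holomorphic part'' of the resolvent contributes something uniformly bounded in $j$ — this requires knowing the $f_j$ are uniformly bounded on the relevant contour, which follows from Remark~\ref{remark_on_two_properties_of_analytic_functions}(c) (all $f_j$ are bounded by $1$ on $\overline{\bbD}$) provided the contour can be taken inside $\overline{\bbD}$ near $1$, i.e. one should arrange the small circle $\gamma$ to lie in $\overline{\bbD}$ except possibly at the point $1$ itself, using that $1$ is isolated in $\sigma(T)$ and $\sigma(T)\subset\overline{\bbD}$. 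The divergence $f_j'(1)\to\infty$ and the extraction of the top-order nilpotent term via $N^{m-2}$ are then the clean heart of the argument.
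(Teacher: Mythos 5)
Your ``only if'' direction is sound and, despite the contour-integral packaging, ultimately rests on the same fact as the paper's proof: that $\sum_k k\,a_{j,k}$ is unbounded over $j$ for every weighting scheme (this is Lemma~\ref{lem_weighting_scheme_monotone_sequence} applied with $r_k = k$). The paper avoids the Riesz projection and the Laurent expansion entirely: it picks $y \in \ker((1-T)^2)\setminus\ker(1-T)$, notes that $T^k y = y + k(T-1)y$ on the resulting two-dimensional invariant subspace, and tests against a functional. Your extraction of the coefficient $f_j'(1)$ of $N^{m-1}P$ by applying $N^{m-2}$ to $f_j(T)P$ is a correct, if heavier, version of the same computation, and your residue identities are legitimate since each $f_j$ is analytic on a neighbourhood of $\overline{\bbD}$.

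The ``if'' direction, however, contains a genuine error. You claim that $T|_{\ker P}$ has spectral radius strictly less than $1$ ``because $1$ was isolated in $\sigma(T)$ and $r(T)=1$''. This does not follow: the hypothesis only makes $1$ an isolated point of $\sigma(T)$, and the spectrum may contain other points of modulus $1$. Concretely, take $T = 1 \oplus J$ on $\bbC^3$, where $J$ is the $2\times 2$ Jordan block with eigenvalue $-1$. Then $1$ is a first-order pole of the resolvent and $r(T)=1$, but $r(T|_{\ker P}) = 1$ and $\|T^j\| \to \infty$, so your proposed witness $f_j(z) = z^j$ does \emph{not} produce a bounded family $\{f_j(T): j\}$. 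The statement is still true, but it must be witnessed by a weighting scheme that only ``sees'' the singularity at $1$: if $m=1$, then near $\lambda=1$ one has $(\lambda-1)R(\lambda,T) = P + (\lambda-1)H(\lambda)$ with $H$ holomorphic, and $(\lambda-1)R(\lambda,T)$ is in fact bounded on all of $(1,\infty)$, i.e.\ $T$ is Abel-bounded; the weighting scheme of Examples~\ref{examp_weighting_schemes}(b) then does the job. This one-line observation is exactly the paper's proof of this implication.
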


For the proof we need the following lemma which will also be useful later on. 

\begin{lemma} \label{lem_weighting_scheme_monotone_sequence}
	Let $(f_j)_{j \in J}$ be a weighting scheme and let $f_j(z) = \sum_{k=0}^\infty a_{j,k}z^k$ be the power series expansion of each $f_j$ around $0$. Moreover, let $(r_k)_{k \in \bbN_0} \subset [0,\infty)$ be a non-decreasing sequence of real numbers and assume that $\limsup_{k\to \infty} \sqrt[k]{r_k} \le 1$ such that the series $\sum_{k=0}^\infty a_{j,k} r_k$ converges for each $j \in J$. \par 
	If the set $\{\sum_{k=0}^\infty a_{j,k}r_k: \, j \in J\}$ is bounded in $\bbR$, then the sequence $(r_k)_{k \in \bbN_0}$ must be bounded.
\end{lemma}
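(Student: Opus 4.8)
The plan is to argue by contraposition: assuming $(r_k)$ is unbounded, I would show that the set $\{\sum_{k=0}^\infty a_{j,k} r_k : j \in J\}$ cannot be bounded, by exploiting the two defining features of a weighting scheme --- that the coefficients $a_{j,k}$ are non-negative with $\sum_k a_{j,k} = 1$ (conditions (WS1), (WS2) via Remark~\ref{remark_on_two_properties_of_analytic_functions}), and that for each fixed $k$ we have $a_{j,k} \overset{j}{\to} 0$ (Proposition~\ref{prop_weighting_scheme_conv_of_coefficients}). The monotonicity of $(r_k)$ is the crucial structural hypothesis: it lets me say that once the "tail mass" $\sum_{k \ge k_0} a_{j,k}$ is a fixed fraction, say at least $\tfrac12$, the corresponding partial sum $\sum_{k \ge k_0} a_{j,k} r_k$ is at least $\tfrac12 r_{k_0}$, because every $r_k$ with $k \ge k_0$ is at least $r_{k_0}$.

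First I would fix an arbitrary threshold $C > 0$ and, using unboundedness together with monotonicity, pick $k_0 \in \bbN_0$ with $r_{k_0} \ge 2C$. Next, since $a_{j,k} \overset{j}{\to} 0$ for each of the finitely many indices $k = 0, 1, \dots, k_0 - 1$, and since $\sum_{k=0}^\infty a_{j,k} = 1$ for every $j$, I can choose $j$ large enough that $\sum_{k=0}^{k_0-1} a_{j,k} \le \tfrac12$, hence $\sum_{k=k_0}^\infty a_{j,k} \ge \tfrac12$. For such a $j$, using $a_{j,k} \ge 0$ and $r_k \ge r_{k_0}$ for all $k \ge k_0$,
\begin{align*}
	\sum_{k=0}^\infty a_{j,k} r_k \;\ge\; \sum_{k=k_0}^\infty a_{j,k} r_k \;\ge\; r_{k_0} \sum_{k=k_0}^\infty a_{j,k} \;\ge\; \frac{r_{k_0}}{2} \;\ge\; C \text{.}
\end{align*}
Since $C > 0$ was arbitrary, $\{\sum_{k=0}^\infty a_{j,k} r_k : j \in J\}$ is unbounded, which is the contrapositive of the claim.

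The only point that needs a little care is the legitimacy of the tail estimates: dropping the first $k_0$ terms and bounding the remaining series termwise is valid precisely because all the $a_{j,k}$ and $r_k$ are non-negative, and the convergence of $\sum_{k=0}^\infty a_{j,k} r_k$ is assumed outright in the statement, so no interchange-of-limits subtlety arises --- the growth hypothesis $\limsup_k \sqrt[k]{r_k} \le 1$ and the analyticity of $f_j$ on a neighbourhood of $\overline{\bbD}$ are exactly what guarantee that convergence, but I do not need to re-derive it. I do not anticipate a genuine obstacle here; the one thing to get right is the order of quantifiers --- choose $k_0$ depending on $C$ first, and only then choose $j$ depending on $k_0$ --- so that the finiteness of the index set $\{0,\dots,k_0-1\}$ can be used to invoke the coefficientwise convergence from Proposition~\ref{prop_weighting_scheme_conv_of_coefficients}.
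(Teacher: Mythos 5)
Your proof is correct and follows essentially the same route as the paper's: contraposition, choose $k_0$ so that the monotone sequence exceeds the threshold from index $k_0$ on, then use the coefficientwise convergence $a_{j,k}\overset{j}{\to}0$ together with $\sum_k a_{j,k}=1$ to push the mass into the tail and bound the sum from below. The only difference is cosmetic (you keep half the mass in the tail, the paper makes the head mass at most $\tfrac{1}{C+1}$), so nothing further is needed.
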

\begin{proof}
	Assume that $(r_k)_{k \in \bbN_0}$ is unbounded and let $C > 0$. Then we have $r_k \ge C+1$ for all sufficiently large $k$, say $k \ge k_0$. It follows from Proposition~\ref{prop_weighting_scheme_conv_of_coefficients} that for some $j \in J$ we have $a_{j,k} \le \frac{1}{k_0(C+1)}$ for all $k = 0,...,k_0-1$. Using that $\sum_{k=0}^\infty a_{j,k} = 1$ and that $a_{j,k} \ge 0$ (see Remark~\ref{remark_on_two_properties_of_analytic_functions}(a) and (b)) we obtain
	\begin{align*}
		\sum_{k=0}^\infty a_{j,k} r_k & \ge (C+1) \sum_{k=k_0}^\infty a_{j,k} = (C+1)\big(1 - \sum_{k=0}^{k_0-1}a_{j,k}\big) \ge C \text{.}
	\end{align*}
	Thus, the set $\{\sum_{k=0}^\infty a_{j,k}r_k: \, j \in J\}$ is unbounded.  
\end{proof}

\begin{proof}[Proof of Proposition~\ref{prop_spec_rad_pol_of_resolvent_and_ws_bounded}]
	``$\Leftarrow$'' If $m=1$, then $T$ is Abel-bounded and in particular (WS)-bounded. \par 
	``$\Rightarrow$'' Assume for a contradiction that $m \ge 2$. Then there is a vector $y \in \ker((1-T)^2) \setminus \ker(1-T)$. If $x := (T-1)y$, then $x$ and $y$ are linearly independent and their span is $T$-invariant. Using the representation matrix of $T$ on the span of $x$ and $y$ with respect to the basis $(x,y)$, a short computation shows that $T^k y = kx + y$ for each $k \in \bbN_0$. \par 
	Choose a functional $x' \in X'$ such that $\langle x', x \rangle > 0$. Since $T$ is (WS)-bounded, there is a weighting scheme $(f_j)_{j \in J}$ such that the operator family $\{f_j(T): j \in J\}$ is bounded in operator norm. Denoting by $f_j(z) = \sum_{k=0}^\infty a_{j,k}z^k$ the power series expansion of each $f_j$ around $0$, we conclude that the set
	\begin{align*}
		\{\sum_{k=0}^\infty a_{j,k}\langle x',T^ky \rangle: j \in J\}
	\end{align*}
	is bounded in $\bbC$. Moreover, we have $\sum_{k=0}^\infty a_{j,k} \langle x', y\rangle = \langle x', y \rangle$ for each $j \in J$, so the set
	\begin{align*}
		\{\sum_{k=0}^\infty a_{j,k} \langle x',kx \rangle: j \in J \} = \{ \sum_{k=0}^\infty a_{j,k} \langle x', T^ky \rangle - \sum_{k=0}^\infty a_{j,k} \langle x', y \rangle: j \in J \}
	\end{align*}
	must be bounded in $\bbC$ as well. By Lemma~\ref{lem_weighting_scheme_monotone_sequence} this implies that the sequence $(\langle x',kx\rangle )_{k \in \bbN_0}$ is bounded, which is a contradiction.  
\end{proof}

\begin{corollary} \label{cor_pos_only_poles_and_spec_circle_ws_bounded}
	Let $T$ be a positive operator on a complex Banach lattice $E$, $r(T) = 1$. If $\sigma_{\per}(T)$ consists of poles of the resolvent, then the following assertions are equivalent:
	\begin{enumerate}[(i)]
		\item For each weighting scheme $(f_j)_{j \in J}$, the set $\{f_j(T): j \in J\}$ is bounded in operator norm. \par 
		\item $T$ is (WS)-bounded, i.e.~there is at least one weighting scheme $(f_j)_{j \in J}$ such that the set $\{f_j(T): j \in J\}$ is bounded in operator norm. \par 
		\item The spectral radius $r(T) = 1$ is a first order pole of the resolvent. \par 
		\item All peripheral spectral values of $T$ are first order poles of the resolvent.
	\end{enumerate}
\end{corollary}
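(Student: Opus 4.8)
The plan is to prove the cycle of implications $(iv)\Rightarrow(i)\Rightarrow(ii)\Leftrightarrow(iii)\Leftrightarrow(iv)$. Before starting I would record two consequences of the hypotheses. Since $T$ is positive, $r(T)=1\in\sigma(T)$, so $1\in\sigma_{\per}(T)$ and hence, by assumption, $1$ is a pole of the resolvent. Moreover, poles are isolated points of $\sigma(T)$, so the compact set $\sigma_{\per}(T)$ is finite, say $\sigma_{\per}(T)=\{\lambda_1,\dots,\lambda_p\}\subset\bbT$; in particular $\sigma_{\per}(T)$ is clopen in $\sigma(T)$, so $\sigma(T)\setminus\sigma_{\per}(T)$ is compact and contained in the open unit disk, whence $\rho:=\sup\{|\mu|:\mu\in\sigma(T)\setminus\sigma_{\per}(T)\}<1$ (with $\rho:=0$ if $\sigma(T)\subseteq\bbT$).

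Two of the implications are immediate: $(i)\Rightarrow(ii)$ holds because weighting schemes exist (e.g.\ $f_j(z)=z^j$, see Examples~\ref{examp_weighting_schemes}(a)), and $(iv)\Rightarrow(iii)$ holds since $1\in\sigma_{\per}(T)$. The equivalence $(ii)\Leftrightarrow(iii)$ is Proposition~\ref{prop_spec_rad_pol_of_resolvent_and_ws_bounded}, applied to the spectral value $r(T)=1$, which is a pole of the resolvent by the first paragraph. So it remains to prove $(iii)\Rightarrow(iv)$ and $(iv)\Rightarrow(i)$.

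For $(iii)\Rightarrow(iv)$ I would argue via a positivity-based domination of resolvents. For $|\lambda|>1$ the Neumann series $R(\lambda,T)=\sum_{k=0}^\infty\lambda^{-k-1}T^k$ gives $|R(\lambda,T)x|\le R(|\lambda|,T)\,|x|$ for all $x\in E$, hence $||R(\lambda,T)||\le||R(|\lambda|,T)||$. Now fix $\lambda_i=e^{i\alpha_i}\in\sigma_{\per}(T)$ with pole order $m_i\in\bbN$. Since the leading Laurent coefficient of $R(\phdot,T)$ at $\lambda_i$ does not vanish, approaching $\lambda_i$ along the ray $\lambda=re^{i\alpha_i}$, $r\downarrow1$ (which stays in the resolvent set since $r>1=r(T)$), yields $||R(re^{i\alpha_i},T)||\ge c_i\,(r-1)^{-m_i}$ for some $c_i>0$ and all $r$ sufficiently close to $1$. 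Together with the domination inequality this forces $||R(r,T)||\ge c_i(r-1)^{-m_i}$ as $r\downarrow1$. If $(iii)$ holds, i.e.\ $1$ is a first order pole, then $||R(r,T)||\le C(r-1)^{-1}$ near $1$, and comparing the two growth rates gives $m_i\le1$, i.e.\ $m_i=1$. Thus every peripheral spectral value is a first order pole. I expect this to be the main obstacle, since it is the only step where positivity is genuinely used and where the resolvent growth estimate has to be handled with care; alternatively one could invoke the classical fact that, for positive $T$, $r(T)$ is a pole of the resolvent of maximal order among the peripheral poles.

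Finally, for $(iv)\Rightarrow(i)$ I would use the spectral decomposition provided by the (now first order) peripheral poles. Let $P_i$ be the Riesz projection of $T$ associated with the isolated spectral value $\lambda_i$, put $P:=P_1+\dots+P_p$ and $Q:=1-P$; since each $\lambda_i$ is a first order pole we have $TP_i=\lambda_iP_i$, and on $QE$ the restriction $T_0:=T|_{QE}$ satisfies $r(T_0)=\rho<1$. Let $(f_j)_{j\in J}$ be an arbitrary weighting scheme with power series expansions $f_j(z)=\sum_{k}a_{j,k}z^k$, so that $a_{j,k}\ge0$ and $\sum_{k}a_{j,k}=1$. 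Then $f_j(T)P_i=f_j(\lambda_i)P_i$, and since $|\lambda_i|=1$ and $f_j$ fulfils (WS1) and (WS2), Remark~\ref{remark_on_two_properties_of_analytic_functions}(c) yields $|f_j(\lambda_i)|\le1$; hence $||f_j(T)P||\le\sum_{i=1}^p||P_i||$, uniformly in $j$. Moreover, fixing $\rho'\in(\rho,1)$ and $M>0$ with $||T_0^k||\le M(\rho')^k$ for all $k\in\bbN_0$ (Gelfand's formula), we obtain $||f_j(T)Q||\le||Q||\sum_{k}a_{j,k}||T_0^k||\le||Q||M\sum_{k}a_{j,k}(\rho')^k\le||Q||M$, again uniformly in $j$. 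Consequently $||f_j(T)||\le\sum_{i=1}^p||P_i||+||Q||M$ for every $j\in J$, which is precisely assertion $(i)$, and the cycle of implications is complete.
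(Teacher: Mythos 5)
Your proof is correct and follows essentially the same route as the paper: (i)$\Rightarrow$(ii) is trivial, (ii)$\Leftrightarrow$(iii) is Proposition~\ref{prop_spec_rad_pol_of_resolvent_and_ws_bounded} combined with $r(T)\in\sigma(T)$ for positive $T$, (iii)$\Rightarrow$(iv) is deduced from the domination $|R(\lambda,T)x|\le R(|\lambda|,T)|x|$, and (iv)$\Rightarrow$(i) comes down to the fact that (iv) forces power-boundedness (which you obtain explicitly via the Riesz decomposition, whereas the paper just asserts it). You merely spell out the resolvent-growth comparison and the spectral decomposition that the paper leaves implicit.
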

\begin{proof}
	The implication ``(i) $\Rightarrow$ (ii)'' is obvious. To see ``(ii) $\Rightarrow$ (iii)'', note that $r(T) \in \sigma(T)$ since $T$ is a positive operator (see \cite[Proposition~V.4.1]{Schaefer1974}) and apply Proposition~\ref{prop_spec_rad_pol_of_resolvent_and_ws_bounded}. \par 
	The implication ``(iii) $\Rightarrow$ (iv)'' is a consequence of the estimate $|R(\lambda,T)x| \le R(|\lambda|,T)|x|$ which holds true for all $x \in E$ and for all $\lambda$ with $|\lambda| > 1$ since $T$ is positive. To prove ``(iv) $\Rightarrow$ (i)'', note that (iv) implies that $T$ is power-bounded, which in turn implies (i).  
\end{proof}

According to Corollary~\ref{cor_pos_only_poles_and_spec_circle_ws_bounded} the notion of (WS)-boundedness does not yield anything new for positive compact operators; in fact it follows from the equivalence of conditions (i) and (ii) in the corollary that such an operator is (WS)-bounded if and only if it is power-bounded.

Our last result in this section is a lemma on the behaviour of the powers $T^n$ of a (WS)-bounded positive operator $T$ when applied to certain positive vectors. This lemma will be the key to our spectral results in the subsequent sections.

\begin{lemma} \label{lem_ws_bounded_op_monotone_vector}
	Let $T$ be a positive operator on a complex Banach lattice $E$, $r(T) = 1$. Suppose that $0 \le x \in E$ and that the sequence $(T^nx)_{n \in \bbN_0}$ is non-decreasing. If $T$ is (WS)-bounded, then $(T^nx)_{n \in \bbN_0}$ is bounded in norm.
\end{lemma}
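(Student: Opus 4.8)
The plan is to combine (WS)-boundedness with the two monotonicity properties at hand---that of the sequence $(T^nx)_{n}$ and that of the lattice norm on the positive cone---by means of a direct cone-domination argument. (One could instead try to feed the sequence $r_k := \|T^kx\|$ into Lemma~\ref{lem_weighting_scheme_monotone_sequence}, but it does not seem straightforward to control the set $\{\sum_k a_{j,k}r_k : j\}$ directly, so I would avoid that route.) First I would fix a weighting scheme $(f_j)_{j\in J}$ witnessing (WS)-boundedness, so that $\|f_j(T)\| \le M$ for all $j$ and some $M \ge 0$, and write $f_j(z) = \sum_{k=0}^\infty a_{j,k}z^k$. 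By Remark~\ref{remark_on_two_properties_of_analytic_functions} the coefficients satisfy $a_{j,k} \ge 0$ and $\sum_{k=0}^\infty a_{j,k} = 1$, and by the functional calculus recalled before Definition~\ref{def_ws_bounded} we have $f_j(T) = \sum_{k=0}^\infty a_{j,k}T^k$ with convergence in operator norm; hence $f_j(T)x = \sum_{k=0}^\infty a_{j,k}T^kx$ is a norm-convergent series of positive vectors, so $0 \le f_j(T)x$ and $\|f_j(T)x\| \le M\|x\|$.

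The main step is the following domination estimate. Fix $N \in \bbN_0$. Since $(T^nx)_n$ is non-decreasing we have $T^kx \ge T^Nx$ for every $k \ge N$, so for every $K \ge N$
\[
	\sum_{k=0}^{K} a_{j,k}T^kx \ \ge\ \sum_{k=N}^{K} a_{j,k}T^kx \ \ge\ \Big(\sum_{k=N}^{K} a_{j,k}\Big) T^Nx .
\]
Letting $K \to \infty$ and using that the positive cone of $E$ is closed, we obtain $f_j(T)x \ge b_{j,N}\,T^Nx \ge 0$ with $b_{j,N} := \sum_{k=N}^\infty a_{j,k} \in [0,1]$. Since the lattice norm is monotone on the positive cone, this yields $b_{j,N}\,\|T^Nx\| \le \|f_j(T)x\| \le M\|x\|$.

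It then remains to choose $j$ so that $b_{j,N}$ is bounded below. For fixed $N$, Proposition~\ref{prop_weighting_scheme_conv_of_coefficients} gives $\sum_{k=0}^{N-1} a_{j,k} \overset{j}{\to} 0$, hence $b_{j,N} = 1 - \sum_{k=0}^{N-1} a_{j,k} \overset{j}{\to} 1$; picking $j$ with $b_{j,N} \ge \tfrac12$ gives $\|T^Nx\| \le 2M\|x\|$ (with a little more care, letting $b_{j,N}\to 1$, one even gets $\|T^Nx\| \le M\|x\|$). As $N$ was arbitrary, $(T^nx)_n$ is bounded in norm. I do not expect a genuine obstacle; the only points that require attention are the passage to the limit $K\to\infty$ in the vector inequality (closedness of the positive cone, and applicability of the operator-norm-convergent functional-calculus series to $x$ term by term) and the observation that it is monotonicity of the lattice norm that turns the vector estimate $b_{j,N}T^Nx \le f_j(T)x$ into the scalar bound we are after.
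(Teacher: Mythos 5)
Your argument is correct, and it takes a genuinely different route from the paper's. The paper scalarizes: for each positive functional $x' \in E'$ it observes that $\sum_{k} a_{j,k}\langle x', T^k x\rangle = \langle x', f_j(T)x\rangle$ stays bounded over $j$, invokes the auxiliary scalar Lemma~\ref{lem_weighting_scheme_monotone_sequence} (whose proof rests on exactly the tail-mass observation $\sum_{k \ge N} a_{j,k} \to 1$ that you use) to conclude that $(\langle x', T^k x\rangle)_k$ is bounded, and then applies the Uniform Boundedness Principle to pass from weak to norm boundedness. You instead keep everything inside the lattice: the vector inequality $b_{j,N}\,T^N x \le f_j(T)x$ together with monotonicity of the lattice norm on the positive cone gives the bound directly, with no duality and no UBP, and it even yields the explicit quantitative estimate $\|T^N x\| \le M\|x\|$ where $M = \sup_j \|f_j(T)\|$, which the paper's proof does not record. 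Your side remark is also on point: the naive attempt to feed $r_k := \|T^k x\|$ into Lemma~\ref{lem_weighting_scheme_monotone_sequence} fails because the triangle inequality only bounds $\|f_j(T)x\|$ from above by $\sum_k a_{j,k}\|T^k x\|$, not the other way around; the paper's fix is to replace norms by positive functionals, yours is to avoid scalarization altogether. All the delicate points you flag (operator-norm convergence of $\sum_k a_{j,k}T^k$ for $r(T)=1$, closedness of the positive cone when letting $K \to \infty$, and $b_{j,N} = 1 - \sum_{k=0}^{N-1}a_{j,k} \to 1$ via Proposition~\ref{prop_weighting_scheme_conv_of_coefficients}) are handled correctly.
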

\begin{proof}
	Let $0 \le x' \in E'$. Since $T$ is (WS)-bounded, there is a weighting scheme $(f_j)_{j \in J}$ such that the operator family $\{f_j(T): j \in J\}$ is bounded in operator norm. If $f_j(z) = \sum_{k=0}^\infty a_{j,k}z^k$ denotes the power series expansion of each $f_j$ around $0$, then the set
	\begin{align*}
		\{\sum_{k=0}^\infty a_{j,k}\langle x',T^kx \rangle: j \in J\}
	\end{align*}
	is bounded in $\bbC$. Lemma~\ref{lem_weighting_scheme_monotone_sequence} now implies that the sequence $(\langle x', T^kx\rangle)_{k \in \bbN_0}$ is bounded. Due to the Uniform Boundedness Principle this yields the assertion.  
\end{proof}

\section{The peripheral point spectrum of positive operators} \label{section_per_pnt_spec_general}

Now we can apply our results from the previous sections to prove several sufficient conditions for the peripheral point spectrum of a positive operator to be cyclic. For several known results on this topic we refer the reader for example to \cite[Sections~V.4 and V.5]{Schaefer1974}, \cite[Sections~4 and 5]{Grobler1995} and \cite[Sections~2.2, 2.3 and 3.2]{Krieger1969}. We point out that most of the results in these references are somewhat different in nature from ours below. Some of our theorems were inspired by (and thus have some similarity with) results in \cite[Section~3]{Scheffold1971}. Our first theorem is concerned with adjoint operators.

\begin{theorem} \label{thm_cyclcic_per_pnt_spec_ws_bounded_and_dual}
	Let $T$ be a positive operator on a complex Banach lattice $E$, $r(T) = 1$. Suppose that $E$ has a pre-dual Banach lattice and that $T$ has a pre-adjoint. If $T$ is (WS)-bounded, then we have $\dim \ker(e^{i\theta}-T) \le \dim \ker(e^{in\theta}-T)$ for each $n \in \bbZ$ and each $\theta \in \bbR$. In particular, the peripheral point spectrum of $T$ is cyclic.
\end{theorem}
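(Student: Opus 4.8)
The plan is to reduce the assertion to Theorem~\ref{thm_dominated_eigenvector_dual} by manufacturing, for any given finite collection of eigenvectors for the eigenvalue $e^{i\theta}$, a single strictly positive fixed vector of $T$ that dominates all of them.

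First I would observe that it suffices to prove: for every finite-dimensional subspace $V \subseteq \ker(e^{i\theta}-T)$ we have $\dim V \le \dim\ker(e^{in\theta}-T)$. This gives $\dim\ker(e^{i\theta}-T) \le \dim\ker(e^{in\theta}-T)$ irrespective of whether the left-hand side is finite or $\infty$, and the cyclicity of $\sigma_{\per,\pnt}(T)$ then follows immediately: if $e^{i\theta}$ is a peripheral eigenvalue, then $\dim\ker(e^{i\theta}-T)\ge 1$, hence $\dim\ker(e^{in\theta}-T)\ge 1$, and since $|e^{in\theta}| = 1 = r(T)$, the number $e^{in\theta}$ is again a peripheral eigenvalue. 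So fix such a $V$, assume $\dim V = d \ge 1$, pick a basis $z_1,\dots,z_d$ of $V$, and set $w := |z_1| + \cdots + |z_d|$. Since $T$ is positive and $Tz_i = e^{i\theta}z_i$, we get $T|z_i| \ge |Tz_i| = |z_i|$, hence $Tw \ge w$, and therefore the sequence $(T^n w)_{n \in \bbN_0}$ is non-decreasing.

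Next I would produce the dominating fixed vector. By Lemma~\ref{lem_ws_bounded_op_monotone_vector}, (WS)-boundedness of $T$ forces $(T^n w)_n$ to be bounded in norm. As $E$ has a pre-dual Banach lattice $F$, the space $E = F'$ is a dual Banach lattice, so the non-decreasing norm-bounded sequence $(T^n w)_n$ has a supremum $x := \sup_n T^n w$ in $E$ and converges to $x$ in the weak${}^*$-topology $\sigma(E,F)$ (here one uses that order intervals in a dual Banach lattice are weak${}^*$-compact, so a weak${}^*$ cluster point of the sequence must coincide with the supremum). Because $T$ has a pre-adjoint, $T$ is weak${}^*$-continuous, whence $Tx = \lim_n T^{n+1}w = x$; moreover $x \ge w > 0$ since $z_1 \neq 0$. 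Thus $0 < x \in \ker(1-T)$, and from $|z_i| \le w \le x$ we get $z_1,\dots,z_d \in E_x$, so that $V \subseteq E_x \cap \ker(e^{i\theta}-T)$.

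Finally, applying Theorem~\ref{thm_dominated_eigenvector_dual} to this $x$ with $r = 1$ yields
\[
	\dim V \le \dim\bigl[E_x \cap \ker(e^{i\theta}-T)\bigr] \le \dim\bigl[E_x \cap \ker(e^{in\theta}-T)\bigr] \le \dim\ker(e^{in\theta}-T),
\]
which is exactly the reduced claim. The only genuinely delicate point is the third paragraph: one must be careful that the monotone bounded sequence really converges in the weak${}^*$-topology to its supremum — this rests only on the standard facts that dual Banach lattices admit suprema of norm-bounded increasing nets and that their order intervals are weak${}^*$-compact — and one must invoke the pre-adjoint hypothesis precisely at the point where weak${}^*$-continuity of $T$ is needed to conclude that the limit is $T$-fixed. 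The remaining steps are routine bookkeeping, and the passage through arbitrary finite-dimensional $V$ is what lets the argument cover infinite-dimensional eigenspaces without a separate case distinction.
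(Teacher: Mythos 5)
Your proposal is correct and follows essentially the same route as the paper: iterate $T$ on the modulus of eigenvectors to get a non-decreasing sequence, use Lemma~\ref{lem_ws_bounded_op_monotone_vector} to bound it, pass to the weak${}^*$-limit (a fixed point by weak${}^*$-continuity of $T$ via the pre-adjoint), and feed the resulting dominating fixed vector into Theorem~\ref{thm_dominated_eigenvector_dual}. The only cosmetic differences are that the paper produces a separate fixed vector $x_i \ge |z_i|$ for each basis vector and then sums, whereas you sum the moduli first, and that you spell out the identification of the weak${}^*$-limit with the supremum in slightly more detail.
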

\begin{proof}
	Let $\theta \in \bbR$. We may assume that $e^{i\theta}$ is an eigenvalue of $T$ because otherwise the assertion is trivial. If $0 \not= z \in E$ is a corresponding eigenvector, then we have $|z| \le T|z|$ and iterating this inequality, we obtain that the sequence $(T^n|z|)_{n \in \bbN_0}$ is non-decreasing. Lemma~\ref{lem_ws_bounded_op_monotone_vector} thus implies that the sequence is bounded in norm and hence, it converges with respect to the weak${}^*$-topology to an element $x \ge |z|$. Since $T$ is continuous with respect to the weak${}^*$-topology, $x$ is a fixed point of $T$. \par 
	Now, let $\dim \ker(e^{i\theta}-T) \ge m \in \bbN$, and choose $m$ vectors $z_1,...,z_m \in \ker(e^{i\theta}-T)$ which are linearly independent. As shown above, we can find vectors $x_1,...,x_m \in \ker(1-T)$ which dominate the vectors $|z_1|,...,|z_m|$, respectively. Now we define $x = x_1 + ... + x_m$ and Theorem~\ref{thm_dominated_eigenvector_dual} then implies that
	\begin{align*}
		m \le \dim[E_x \cap \ker(e^{i\theta}-T)] \le \dim[E_x \cap \ker(e^{in\theta}-T)] \le \dim \ker(e^{in\theta}-T)
	\end{align*}
	for each $n \in \bbZ$. This yields the assertion.  
\end{proof}

A related result, however with somewhat stronger assumptions and without a dimension estimate, can be be found in \cite[Satz~3.3]{Scheffold1971}. \par 

In \cite[Theorem~3.5]{Scheffold1971} Scheffold proved that on AL-spaces and on reflexive Banach lattices each Abel-bounded positive operator has cyclic peripheral point spectrum. In fact, a short inspection of the proof shows that this result even holds true in the larger class of KB-spaces (see \cite[Definition~2.4.11]{Meyer-Nieberg1991} for a definition of the notion \emph{KB-space}). By combining Scheffold's proof with our results from Sections~\ref{section_eigenvalues_with_dom_eigenvectors} and \ref{section_ws_bounded_operators} we obtain the following generalization of Scheffold's theorem:

\begin{theorem} \label{thm_cyclic_per_pnt_spec_ws_bounded_on_kb_space}
	Let $T$ be a positive operator on a complex Banach lattice $E$, $r(T) = 1$. If $E$ is a KB-space and $T$ is (WS)-bounded, then we have $\dim \ker(e^{i\theta}-T) \le \dim \ker(e^{in\theta}-T)$ for each $n \in \bbZ$ and each $\theta \in \bbR$. In particular, the peripheral point spectrum of $T$ is cyclic.
\end{theorem}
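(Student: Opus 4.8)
The plan is to mimic the strategy of the proof of Theorem~\ref{thm_cyclcic_per_pnt_spec_ws_bounded_and_dual}, but replacing the use of the weak${}^*$-topology (which required a pre-dual) by a genuine norm-convergence argument that is available on KB-spaces. Fix $\theta \in \bbR$ and assume that $e^{i\theta}$ is an eigenvalue of $T$, since otherwise there is nothing to prove. If $0 \neq z \in \ker(e^{i\theta}-T)$, then $|z| = |e^{i\theta}z| = |Tz| \le T|z|$, and iterating gives that $(T^n|z|)_{n \in \bbN_0}$ is non-decreasing. Since $T$ is (WS)-bounded, Lemma~\ref{lem_ws_bounded_op_monotone_vector} shows that this sequence is norm-bounded. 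The key point is now that on a KB-space every norm-bounded increasing sequence of positive elements converges in norm (this is essentially the defining property of a KB-space; see \cite[Definition~2.4.11 and the surrounding discussion]{Meyer-Nieberg1991}). Hence $T^n|z| \to x$ in norm for some $x \ge |z| \ge 0$, and by continuity of $T$ we get $Tx = \lim_{n} T^{n+1}|z| = x$, so $x \in \ker(1-T)$.

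Next, suppose $\dim \ker(e^{i\theta}-T) \ge m$ for some $m \in \bbN$ and pick linearly independent eigenvectors $z_1,\dots,z_m \in \ker(e^{i\theta}-T)$. By the argument above, for each $i$ there is $x_i \in \ker(1-T)$ with $x_i \ge |z_i|$. Set $x := x_1 + \dots + x_m \in \ker(1-T)$; then $x \ge 0$, each $z_i$ lies in the principal ideal $E_x$, and the $z_i$ remain linearly independent there, so $\dim[E_x \cap \ker(e^{i\theta}-T)] \ge m$. Now I would like to apply Theorem~\ref{thm_dominated_eigenvector_dual} with $r = 1$ to conclude
\begin{align*}
	m \le \dim[E_x \cap \ker(e^{i\theta}-T)] \le \dim[E_x \cap \ker(e^{in\theta}-T)] \le \dim \ker(e^{in\theta}-T)
\end{align*}
for every $n \in \bbZ$, which—since $m \le \dim\ker(e^{i\theta}-T)$ is arbitrary—gives the claimed dimension estimate and, as a special case ($m=1$), the cyclicity of $\sigma_{\per,\pnt}(T)$.

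The main obstacle is that Theorem~\ref{thm_dominated_eigenvector_dual} as stated requires $E$ to have a pre-dual Banach lattice and $T$ to have a pre-adjoint, which a KB-space need not provide. The fix—this is the essential content borrowed from Scheffold's argument—is to work with the principal ideal $E_x$ directly: with an appropriate norm $E_x$ is an AM-space with unit $x$, hence a $C(K;\bbC)$-space, on which $T|_{E_x}$ restricts to a Markov operator. One then carries out the ultrafilter construction from the proof of Theorem~\ref{thm_dominated_eigenvector_dual} \emph{inside} $E_x$: choosing integers $m_k \ge 2$ with $e^{im_k\theta}\to 1$ and a free ultrafilter $\calU$ on $\bbN$, one defines $Rf := w^*\text{-}\lim_\calU (T|_{E_x})^{m_k-1}f$ and $Sf := w^*\text{-}\lim_\calU (T|_{E_x})^{m_k}f$ using the weak${}^*$-topology of the AM-space $E_x$ (which is a dual space, $E_x = C(K;\bbC) = M(K)'$ for a suitable $K$, so no pre-dual of $E$ itself is needed). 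From there the proof proceeds verbatim: $S$ is a Markov operator on $E_x$, its fixed space $F := \ker(1-S)$ contains $E_x \cap \ker(e^{i\theta}-T)$, the restrictions $T|_F$ and $R|_F$ are mutually inverse positive operators hence lattice isomorphisms on the AM-space $F$ (Corollary~\ref{cor_fixed_space_of_markov_op_real}), and Proposition~\ref{prop_eigenvalues_and_funtions_of_lattice_hom} applied to $T|_F$ yields the chain of dimension inequalities above. In effect, the only role the KB-space hypothesis plays is to guarantee that the dominating fixed vector $x$ exists; once $x$ is in hand, everything takes place within the $C(K;\bbC)$-space $E_x$, where the dual-space machinery is automatically available.
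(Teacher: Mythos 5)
The first half of your argument --- using (WS)-boundedness and Lemma~\ref{lem_ws_bounded_op_monotone_vector} to show that $(T^n|z|)_{n \in \bbN_0}$ is norm-bounded, and then the KB-property to obtain a norm limit $x \in \ker(1-T)$ dominating $|z|$ --- is exactly what the paper does. The gap is in your replacement for the pre-dual hypothesis of Theorem~\ref{thm_dominated_eigenvector_dual}. You propose to run the ultrafilter construction inside the principal ideal $E_x \cong C(K;\bbC)$ and to take the limits $Rf$ and $Sf$ in ``the weak${}^*$-topology of $E_x$'', justified by the claim that $E_x = C(K;\bbC) = M(K)'$. This is false: $C(K;\bbC)' = M(K)$, so $M(K)' = C(K;\bbC)''$, which strictly contains $C(K;\bbC)$ in general (a $C(K)$-space is a dual Banach space only for very special $K$, e.g.\ hyperstonean $K$). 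For a general KB-space $E$ and a general $x$ the ideal $E_x$ carries no weak${}^*$-topology, norm-bounded sequences in it need not have convergent subnets in any topology of $E_x$, and so the operators $R$ and $S$ are simply not defined by your construction. This is precisely the point where the pre-dual was used in the proof of Theorem~\ref{thm_dominated_eigenvector_dual}: there the bounded sequence $(T^{m_k}f)$ lives in an order interval $[-Cx,Cx]$ of the dual lattice $E$, which is weak${}^*$-compact, so the ultrafilter limit exists and stays in $E_x$.

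The gap is repairable, and the paper's own fix is the shortest one: a KB-space has order continuous norm, hence $E$ is an ideal in $E''$, and one simply invokes Corollary~\ref{cor_dominated_eigenvector_order_cont_norm} (which is Theorem~\ref{thm_dominated_eigenvector_dual} applied to the bi-adjoint $T''$ on the dual lattice $E''$). Alternatively, and closer in spirit to what you attempted, one can use that in a Banach lattice with order continuous norm every order interval is weakly compact; since $|T^{m_k}f| \le C\,T^{m_k}x = Cx$ whenever $|f| \le Cx$, the limits $Rf$ and $Sf$ can then be taken along $\calU$ in the weak topology of $E$ and automatically land in $[-Cx,Cx] \subset E_x$, after which the rest of your argument goes through verbatim. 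Either way, the missing ingredient is a compactness statement for the order interval $[-Cx,Cx]$, not a duality structure on $E_x$.
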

\begin{proof}
	Let $\theta \in \bbR$ such that $e^{i\theta}$ is an eigenvalue of $T$. If $0\not=z \in E$ is a corresponding eigenvector, then the sequence $(T^n|z|)_{n \in \bbN_0}$ is non-decreasing; since $T$ is (WS)-bounded, the sequence is therefore bounded in norm due to Lemma~\ref{lem_ws_bounded_op_monotone_vector}. As $E$ is a KB-space, the limit $x := \lim_{n \to \infty}T^n|z|$ exists in norm and clearly $x \in \ker(1-T)$. \par 
	Now, let $\dim \ker(e^{i\theta} -T) \ge m \in \bbN$ and choose $m$ linearly independent vectors $z_1,...,z_m \in \ker(e^{i\theta}-T)$. As we have seen above, we can find corresponding vectors $x_1,...,x_m \in \ker(1-T)$ which dominate the vectors $|z_1|,...,|z_m|$, respectively, and we set $x := x_1 + ... + x_m$. Since every KB-space has order continuous norm (this follows from \cite[Theorem~2.4.2\,(i),\,(iii)]{Meyer-Nieberg1991}), we can apply Corollay~\ref{cor_dominated_eigenvector_order_cont_norm} which yields
	\begin{align*}
		m \le \dim[E_x \cap \ker(e^{i\theta}-T)] \le \dim[E_x \cap \ker(e^{in\theta}-T)] \le \dim \ker(e^{in\theta}-T)
	\end{align*}
	for each $n \in \bbZ$.  
\end{proof}

One should point out that a positive operator $T$ on a KB-space $E$ with $r(T) = 1$ does not need to have cyclic peripheral point spectrum if we do not impose any boundedness condition on $T$; for a counterexample we refer to \cite[Example~C-III.4.4]{Arendt1986} (in fact, this reference contains an example of a $C_0$-semigroup $(e^{tA})_{t \ge 0}$ rather then of an operator $T$, but the reader can obtain the desired single operator example by defining $T = e^{t_0A}$ for some $t_0 \not\in 2\pi\bbQ$).

Recall that an operator $T$ on a Banach space $X$ is called \emph{mean ergodic} if the sequence of Ces\`{a}ro means $\frac{1}{n}\sum_{k=0}^{n-1} T^k$ strongly converges to an operator $P \in \calL(X)$. By the Uniform Boundedness Principle, every mean ergodic operator is Ces\`{a}ro-bounded and thus (WS)-bounded. It is interesting to note that for mean ergodic operators with spectral radius $1$ the peripheral point spectrum is cyclic not only on KB-spaces but even on Banach lattices with order continuous norm. This result is essentially known from \cite[Theorem~3.5]{Scheffold1971} (in fact it was stated there only on a smaller class of Banach lattices, but one can easily see that the proof also works on Banach lattices with order continuous norm). The following theorem is a generalization of this result since it also contains an estimate on the dimension of the corresponding eigenspaces.

\begin{theorem} \label{thm_cyclic_per_pnt_spec_ergodic_order_cont_norm}
	Let $T$ be a positive operator on a complex Banach lattice $E$, $r(T) = 1$. If $E$ has order-continuous norm and $T$ is mean ergodic, then $\dim \ker(e^{i\theta}-T) \le \dim \ker(e^{in\theta}-T)$ for each $n \in \bbZ$ and each $\theta \in \bbR$. In particular, the peripheral point spectrum of $T$ is cyclic.
\end{theorem}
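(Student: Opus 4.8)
The plan is to mimic the proof of Theorem~\ref{thm_cyclic_per_pnt_spec_ws_bounded_on_kb_space}, replacing the KB-space argument (which produced a norm-convergent Cesàro/orbit limit) with an argument that works on Banach lattices with order-continuous norm, using mean ergodicity directly. Fix $\theta \in \bbR$ and assume $e^{i\theta}$ is an eigenvalue of $T$ (otherwise nothing to prove). Let $0 \not= z \in \ker(e^{i\theta}-T)$. As before, $|z| \le T|z|$, so $(T^n|z|)_{n \in \bbN_0}$ is non-decreasing; since $T$ is mean ergodic it is Cesàro-bounded, hence (WS)-bounded, so by Lemma~\ref{lem_ws_bounded_op_monotone_vector} the sequence $(T^n|z|)_{n}$ is norm-bounded. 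The key point is then to produce a \emph{fixed vector} $x \ge |z|$ in $\ker(1-T)$ so that Corollary~\ref{cor_dominated_eigenvector_order_cont_norm} applies.

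To get such an $x$, first observe that since $(T^n|z|)_n$ is non-decreasing and norm-bounded, it is also increasing and bounded as a sequence in the order-complete bidual $E''$ (or one can work directly in $E$ if $E$ is order complete); in general, I would pass to $E''$, where the sequence has a supremum $x \ge |z|$. Now apply the mean ergodic projection $P = \lim_N \frac1N \sum_{k=0}^{N-1} T^k$ (strong limit, which exists by hypothesis, and whose bi-adjoint $P''$ is the corresponding mean-ergodic projection for $T''$) to $|z|$: since the Cesàro averages of the non-decreasing bounded sequence $(T^n|z|)_n$ converge to the same limit as the sequence itself would in an order-continuous setting, one checks that $P|z| =: x$ satisfies $x \ge |z|$ and $Tx = x$. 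Concretely: $\frac1N\sum_{k=0}^{N-1} T^k |z| \ge |z|$ for every $N$ (each term dominates $|z|$), so the strong limit $x = P|z|$ satisfies $x \ge |z| > 0$; and $Tx = TP|z| = P|z| = x$ because $TP = P$. So $x \in \ker(1-T)$ with $x \ge |z| \ge 0$, hence $|z| \in E_x$.

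Having obtained the fixed vector $x$, the rest is identical to the KB-space case. If $\dim\ker(e^{i\theta}-T) \ge m \in \bbN$, pick linearly independent $z_1,\dots,z_m \in \ker(e^{i\theta}-T)$, set $x_j := P|z_j| \in \ker(1-T)$ with $x_j \ge |z_j|$, and let $x := x_1 + \dots + x_m$, so each $z_j \in E_x$. Since $E$ has order-continuous norm, Corollary~\ref{cor_dominated_eigenvector_order_cont_norm} gives
\begin{align*}
	m \le \dim[E_x \cap \ker(e^{i\theta}-T)] \le \dim[E_x \cap \ker(e^{in\theta}-T)] \le \dim\ker(e^{in\theta}-T)
\end{align*}
for each $n \in \bbZ$, which is the claimed dimension estimate; cyclicity of $\sigma_{\per,\pnt}(T)$ follows at once.

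The step I expect to be the main obstacle is verifying cleanly that $x := P|z|$ really does dominate $|z|$ and is a fixed point — i.e.\ that mean ergodicity of $T$ on a space with order-continuous norm genuinely delivers a dominating fixed vector for the monotone orbit. The inequality $\frac1N\sum_{k=0}^{N-1}T^k|z| \ge |z|$ holds termwise and passes to the strong limit, so $x \ge |z|$ is in fact routine; and $TP = PT = P$ for the mean ergodic projection gives $Tx = x$ immediately. The only genuine subtlety is that one does not even need Lemma~\ref{lem_ws_bounded_op_monotone_vector} here, since mean ergodicity already produces $x$ directly; the lemma and (WS)-boundedness were the tools needed precisely when no such projection is available. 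I would phrase the proof to use $P$ directly and note in passing that this is why order-continuity of the norm (rather than the KB-property) suffices.
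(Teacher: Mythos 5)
Your proposal is correct and, once you strip away the preliminary detour through (WS)-boundedness and the bidual (which, as you yourself note, is not needed), it is exactly the paper's proof: the mean ergodic projection $P$ gives $|z| \le P|z| \in \ker(1-T)$ directly, and the dimension estimate then follows from Corollary~\ref{cor_dominated_eigenvector_order_cont_norm} applied with $x = P|z_1| + \dots + P|z_m|$.
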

\begin{proof}
	Let $e^{i\theta}$ ($\theta \in \bbR$) be an eigenvalue of $T$ with corresponding eigenvector $0 \not= z \in E$ and let $P$ be the mean ergodic projection of $T$. Since $P$ is the strong limit of the sequence $(\frac{1}{n}\sum_{k=0}^{n-1}T^k)_{n \in \bbN_0}$ and since $T^k|z| \ge |z|$ for each $k \in \bbN_0$, we conclude that $|z| \le P|z| \in \ker(1-T)$. \par 
	Now, let $\dim \ker(e^{i\theta}-T) \ge m \in \bbN$ and choose $m$ vectors $z_1,...,z_m \in \ker(e^{i\theta}-T)$ which are linearly independent. If we define $x := P|z_1| + ... + P|z_m|$, then $x \in \ker(1-T)$ and $z_1,...,z_n \in E_x$, so we can apply Corollary~\ref{cor_dominated_eigenvector_order_cont_norm} to conclude that
	\begin{align*}
		m \le \dim[E_x \cap \ker(e^{i\theta}-T)] \le \dim[E_x \cap \ker(e^{in\theta}-T)] \le \dim \ker(e^{in\theta}-T)
	\end{align*}
	for every $n \in \bbZ$. This proves the Theorem.  
\end{proof}

If $E$ does not have order-continuous norm but is still order complete, we can prove for mean ergodic operators that at least a certain part of $\sigma_{\per,\pnt}(T)$ is still cyclic, see Proposition~\ref{prop_ergodic_op_on_order_complete_space_rational_per_pnt_spec} below.

One might wonder if we can drop the condition on $T$ to be mean ergodic in Theorem~\ref{thm_cyclic_per_pnt_spec_ergodic_order_cont_norm} and replace it by (WS)-boundedness or some stronger boundedness condition, say power-boundedness. The following example shows that the answer is negative.

\begin{example} \label{examp_non_cyclic_per_point_spec_on_am_space_without_unit}
	Let $\{1\} \not= G$ be a closed subgroup of the complex unit circle $\bbT$. Then there is an AM-space $E$ (without unit) and a contractive, positive operator $T \in \calL(E)$ such that $\sigma_{\per,\pnt}(T) = G \setminus \{1\}$. If $G$ is finite, then $E$ can be chosen to have order continuous norm. \par 
	To construct such an example, let $\hat G$ be the dual group of $G$, i.e.~$\hat G\simeq G$ if $G$ is finite and $\hat G \simeq \bbZ$ if $G = \bbT$. Since $\hat G$ is a cyclic group, it contains a generating element $\sigma_0$, and we define $S$ to be the shift operator on $l^\infty(\hat G; \bbC)$ which is given by $(Sf)(\sigma) = f(\sigma_0\sigma)$. We clearly have $\sigma_{\pnt}(S) = G$. \par 
	Now let $\tilde E := l^\infty(\hat G; \bbC) \times l^\infty(\bbN;\bbC)$ and $E := l^\infty(\hat G; \bbC) \times c_0(\bbN;\bbC) \subset \tilde E$. Note that if $G$ is finite, then $E$ is isometrically lattice isomorphic to $c_0(\bbN;\bbC)$ and thus has order continuous norm (see~\cite[Example~6 on p.\,92]{Schaefer1974}). We define a positive operator $\tilde T \in \calL(\tilde E)$ by $\tilde T(f,g) = (f',g')$ where
	\begin{align*}
		 f' = Sf \qquad \text{and} \qquad g'_n = \frac{n}{n+1}g_{n+1} + \frac{1}{n+1} f(\sigma_0) \quad \text{for all } n \in \bbN \text{.}
	\end{align*}
	Obviously, $\tilde T$ is a Markov operator on $\tilde E$; moreover, $\tilde T$ leaves $E$ invariant, and we define $\calL(E) \ni T := \tilde T|_E$. Now, we claim the following properties of the operators $\tilde T$ and $T$:
	\begin{enumerate}[(a)]
		\item For $\lambda \in \bbT$ and $(f,g) \in \tilde E$ we have $\tilde T(f,g) = \lambda(f,g)$ if and only if $Sf = \lambda f$ and
			\begin{align*}
				(*) \qquad g_n = \lambda^n n\big(\overline{\lambda} g_1 - f(\sigma_0) \sum_{k=2}^n \frac{\overline{\lambda}^k}{k(k-1)}\big) \quad \text{for all } n\ge 2 \text{.}
			\end{align*}
			In this case, we moreover have $\overline{\lambda} g_1 = f(\sigma_0) \sum_{k=2}^\infty \frac{\overline{\lambda}^k}{k(k-1)}$.
		\item We have $||\tilde T|| = r(\tilde T) = 1$ and $\sigma_{\pnt,\per}(\tilde T) = G$. \par 
		\item We have $||T|| = r(T) = 1$ and $\sigma_{\pnt,\per}(T) = G \setminus \{1\}$.
	\end{enumerate}
	\begin{proof}
		(a) Clearly, we have $\tilde T(f,g) = \lambda(f,g)$ if and only if $f \in \ker(\lambda - S)$ and $\lambda g_n = \frac{n}{n+1}g_{n+1} + \frac{1}{n+1} f(\sigma_0)$ for each $n \in \bbN$. A short computation shows that the last condition is equivalent to $(*)$. Moreover, if $(*)$ holds true, then the fact that the sequence $g$ is bounded implies that $\overline{\lambda} g_1 = f(\sigma_0) \sum_{k=2}^\infty \frac{\overline{\lambda}^k}{k(k-1)}$. \par 
		(b) Since $\tilde T$ is a Markov operator, we clearly have $||\tilde T|| = r(\tilde T) = 1$. Now, let $\lambda \in \sigma_{\pnt,\per}(\tilde T)$ and let $0 \not= (f,g) \in \tilde E$ be a corresponding eigenvector. Then $|\lambda| = 1$, and the conditions from (a) are fulfilled. This yields that $f \not= 0$; indeed, if we assumed $f = 0$, then $g_1 = 0$ and thus $g = 0$ due to $(*)$. Hence,  we indeed have $f \not= 0$, which implies that $\lambda \in \sigma_{\pnt}(S) = G$. \par 
		On the other hand, let $\lambda \in G$. Then we can find $0 \not= f \in \ker(\lambda - S)$. Now, let $g_1$ be defined by the equation $\overline{\lambda} g_1 = f(\sigma_0) \sum_{k=2}^\infty \frac{\overline{\lambda}^k}{k(k-1)}$, and define $g$ by $(*)$. Note that we have
		\begin{align*}
			|g_n| = n \, |f(\sigma_0)| \; |\sum_{k=n+1}^\infty \frac{\overline{\lambda}^k}{k(k-1)}| \le |f(\sigma_0)| \; n \sum_{k=n+1}^\infty \frac{1}{k(k-1)} = |f(\sigma_0)|
		\end{align*}
		for each $n \ge 2$. Hence, we indeed have $g \in l^\infty(\bbN;\bbC)$, so $(f,g) \in \tilde E$ is an eigenvector of $\tilde T$ for the eigenvalue $\lambda$. Thus, $\lambda \in \sigma_{\per,\pnt}(\tilde T)$. \par 
		(c) Clearly, $\sigma_{\pnt}(T) \cap \bbT \subset \sigma_{\per,\pnt}(\tilde T) = G$. It easily follows from (a) that $\ker(1 - \tilde T)$ is spanned by $(\mathbbm{1}_G, \mathbbm{1}_\bbN) \not \in E$. Hence, $1 \not \in \sigma_{\pnt}(T)$. On the other hand, let $\lambda \in G \setminus\{1\}$ and let $0 \not= (f,g) \in \tilde E$ be an eigenvector of $\tilde T$ for the eigenvalue $\lambda$. Then $(*)$ is fulfilled, and we have $\overline{\lambda} g_1 = f(\sigma_0) \sum_{k=2}^\infty \frac{\overline{\lambda}^k}{k(k-1)}$ due to (a). Note that the sequence $(\frac{1}{k(k-1)})_{k \ge 2}$ decreases to $0$, and that the sequence of partial sums $(\sum_{k=0}^n \overline{\lambda}^k)_{n \in \bbN_0}$ is bounded since $\lambda \in \bbT \setminus \{1\}$. Hence, it follows from the error estimate in Dirichlet's series convergence test (see \cite[Theorem~6.55]{Giaquinta2004}) that the difference $|\overline{\lambda} g_1 - f(\sigma_0) \sum_{k=2}^n \frac{\overline{\lambda}^k}{k(k-1)}|$ decreases to zero at least with the same rate as $\frac{1}{n(n-1)}$. Equation $(*)$ thus implies that $g \in c_0(\bbN;\bbC)$. Hence, we indeed have $(f,g) \in E$ and thus $\lambda \in \sigma_{\pnt}(T)$. Therefore, $\sigma_{\pnt}(T) \cap \bbT = G \setminus \{1\} \not= \emptyset$. \par 
		We finally conclude that $1 \le r(T) \le ||T|| \le ||\tilde T|| = 1$ which proves the assertions of (c).  
	\end{proof}
\end{example}

A further example of a contractive positive operator with spectral radius $1$ which is defined on an AM-space with order continuous norm, but has non-cyclic peripheral point spectrum, is briefly discussed in Remark~\ref{rem_adaption_of_markov_op_example} in the next section. Another example which shows that the assertion of Theorem~\ref{thm_cyclic_per_pnt_spec_ergodic_order_cont_norm} fails for power-bounded operators which are not mean ergodic can be obtained by modifying a $C_0$-semigroup example from \cite[Example~B-III.2.13]{Arendt1986}. Using the idea of this example it is easy to construct a power-bounded (but non-contractive) operator $T$ on $c_0(\bbN;\bbC)$ with spectral radius $r(T) = 1$ such that $\sigma_{\per,\pnt}(T) = G \setminus \{1\}$ for any given finite subgroup $G \not= \{1\}$ of $\bbT$.

To state our next theorem, recall that a bounded linear operator $T$ on a Banach space $X$ is called \emph{weakly almost periodic} if the set $\{T^n: n \in \bbN_0\} \subset \calL(X)$ is relatively compact with respect to the weak operator topology; equivalently, the set $\{T^nx: n \in \bbN_0\} \subset X$ is relatively compact in the weak topology for each $x \in X$.

\begin{theorem} \label{thm_cyclic_per_pnt_spec_almost_weakly_periodic}
	Let $T$ be a positive operator on a complex Banach lattice $E$, $r(T) = 1$. If $T$ is weakly almost periodic, then we have $\dim \ker(e^{i\theta}-T) \le \dim \ker(e^{in\theta}-T)$ for each $n \in \bbZ$ and each $\theta \in \bbR$. In particular, the peripheral point spectrum of $T$ is cyclic.
\end{theorem}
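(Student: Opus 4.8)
The plan is to follow the blueprint of Theorems~\ref{thm_cyclcic_per_pnt_spec_ws_bounded_and_dual}--\ref{thm_cyclic_per_pnt_spec_ergodic_order_cont_norm}. Fix $\theta\in\bbR$ with $e^{i\theta}\in\sigma_{\pnt}(T)$ (otherwise there is nothing to prove), let $m\le\dim\ker(e^{i\theta}-T)$ be a positive integer, and choose linearly independent $z_1,\dots,z_m\in\ker(e^{i\theta}-T)$. The first step is to produce a common dominating fixed vector. Since $|z_j|\le T|z_j|$, each sequence $(T^n|z_j|)_{n\ge0}$ is non-decreasing; a weakly almost periodic operator is power-bounded (apply the Uniform Boundedness Principle to the weakly, hence norm, bounded orbits), so these sequences are norm bounded — as one could also read off from Lemma~\ref{lem_ws_bounded_op_monotone_vector}. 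Weak almost periodicity gives more: $\{T^n|z_j|:n\ge0\}$ is relatively weakly compact, and a non-decreasing relatively weakly compact sequence in a Banach lattice converges weakly to its least upper bound (any weak cluster point is an upper bound and lies below every upper bound, because the positive cone is weakly closed; so the cluster point is unique and equals $\sup_nT^n|z_j|$, and Eberlein--\v Smulian upgrades uniqueness of the cluster point to weak convergence). By weak continuity of $T$, $x_j:=\sup_nT^n|z_j|\in\ker(1-T)$ and $x_j\ge|z_j|$; put $x:=x_1+\dots+x_m\in\ker(1-T)$, so that $z_1,\dots,z_m\in E_x$.

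It then suffices to prove a domination estimate $\dim[E_x\cap\ker(e^{i\theta}-T)]\le\dim[E_x\cap\ker(e^{in\theta}-T)]$ valid in the present generality, i.e.\ an analogue of Theorem~\ref{thm_dominated_eigenvector_dual} / Corollary~\ref{cor_dominated_eigenvector_order_cont_norm} that does not presuppose a pre-dual or order-continuity of the norm; combined with $\ker(e^{in\theta}-T_0|_F)\subseteq E_x\cap\ker(e^{in\theta}-T)$ this closes the argument exactly as in those proofs. I would obtain such an estimate by running the proof of Theorem~\ref{thm_dominated_eigenvector_dual} on the principal ideal $E_x$ (an AM-space with unit $x$, on which $T_0:=T|_{E_x}$ is a Markov operator), but using \emph{weak} limits in $E$ instead of weak${}^*$ limits: for $f\in E_x$ the orbit $\{T_0^nf:n\ge0\}$ lies in the order interval $[-\|f\|_\infty x,\|f\|_\infty x]$ — which, being convex and norm closed, is weakly closed — and is relatively weakly compact by weak almost periodicity, so for a free ultrafilter $\calU$ on $\bbN$ and integers $p_k\ge2$ with $e^{ip_k\theta}\to1$ the limits $Sf:=\text{w-}\lim_\calU T_0^{p_k}f$ and $Rf:=\text{w-}\lim_\calU T_0^{p_k-1}f$ exist and lie in $E_x$. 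As in Theorem~\ref{thm_dominated_eigenvector_dual}, $R$ and $S$ are Markov operators on $E_x$, $RT_0=T_0R=S$, $S$ commutes with $R$ and $T_0$, $E_x\cap\ker(e^{i\theta}-T)\subseteq F:=\ker(1-S)$ (because $T_0^{p_k}z=e^{ip_k\theta}z\to z$ in norm for such an eigenvector $z$), and $T_0|_F$, $R|_F$ are mutually inverse positive operators on $F$; if $F$ is a Banach lattice this makes $T_0|_F$ a lattice isomorphism, and Proposition~\ref{prop_eigenvalues_and_funtions_of_lattice_hom}(b) yields the estimate.

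The one genuinely new point — and the main obstacle — is that $E$, hence the AM-space $E_x$, need not be order complete, so Theorem~\ref{thm_fixed_space_of_markov_op_real} / Corollary~\ref{cor_fixed_space_of_markov_op_real} are not directly available to guarantee that $F=\ker(1-S)$ is a (necessarily order complete) Banach lattice; indeed Examples~\ref{examp_fixed_space_of_markov_op} show that the fixed space of an arbitrary Markov operator on a non-order-complete AM-space need not even be a lattice subspace, so one really has to use the additional structure present here. I would resolve this by exploiting $R$ and $S$ together rather than $S$ alone: either arrange, by choosing $\calU$ suitably (e.g.\ an idempotent ultrafilter, so that $\lim_\calU e^{in\psi}=1$ for every frequency $\psi$ occurring in the peripheral point spectrum), that $S$ is a Markov \emph{projection}, whence $F=S(E_x)$ is the range of a positive contractive projection and thus a Banach lattice on which $T_0|_F$ is an order isomorphism; or pass to the order-complete over-lattice $(E'')_x$, carry out the construction there (orbits stay order bounded, so the weak${}^*$ limits stay in $(E'')_x$), apply Corollary~\ref{cor_fixed_space_of_markov_op_real} and Proposition~\ref{prop_eigenvalues_and_funtions_of_lattice_hom} there, and then descend the resulting dimension estimate to $E$ using the defining feature of weak almost periodicity, namely that weak cluster points of the $E$-orbits $\{T^ny:n\ge0\}$ remain in $E$. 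Verifying this descent — that the $e^{in\theta}$-eigenvectors produced upstairs actually lie in $E$ — is the delicate part, and it is precisely here that weak almost periodicity, rather than mere power-boundedness, is indispensable.
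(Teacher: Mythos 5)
Your proposal reaches the result, but only via your first suggested fix; and that fix, once unwound, is the same mechanism the paper uses, reconstructed by hand. The paper argues globally: by weak almost periodicity the weak-operator closure $\calS$ of $\{T^n:n\in\bbN_0\}$ is a compact abelian semitopological semigroup; its Sushkevich kernel $\calK=\bigcap_{A\in\calS}A\calS$ is a group whose neutral element $P$ is a positive projection with $\ker(e^{i\theta}-T)\subseteq\operatorname{rg}P$ for every $\theta$; picking $R\in\calK$ with $RTP=P$ exhibits $T|_{\operatorname{rg}P}$ as a positive operator with positive inverse on the Banach lattice $\operatorname{rg}P$ (range of a positive projection, \cite[Proposition III.11.5]{Schaefer1974}), and Proposition~\ref{prop_eigenvalues_and_funtions_of_lattice_hom}(b) finishes. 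Your idempotent-ultrafilter operator $S$ \emph{is} that neutral element: $S$ is idempotent because $\calU+\calU=\calU$ and the iterated-limit identity holds in the weakly compact orbit closures, and $Sz=z$ for every unimodular eigenvector since $\lim_\calU e^{in\theta}=1$. Observe, though, that with this $S$ your entire first paragraph and the localization to $E_x$ are superfluous: you can define $S$ on all of $E$ (weak almost periodicity gives relative weak compactness of every orbit, not just of orbits in an order interval), and then $\ker(e^{i\theta}-T)\subseteq\operatorname{rg}S$ directly, with no need for the dominating fixed vector $x$, the sequence $(p_k)$, or the auxiliary operator built from $T^{p_k}$. What your route buys is a self-contained construction in place of the citation to Krengel's book, at the price of invoking the existence of idempotent ultrafilters.

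Your second fix (work in $(E'')_x$ and descend) has a genuine gap exactly where you locate it, and I do not see how to close it. The $e^{in\theta}$-eigenvectors that Proposition~\ref{prop_eigenvalues_and_funtions_of_lattice_hom}(b) manufactures upstairs are the lattice powers $z^{[n]}$ of Definition~\ref{def_lattice_powers_of_a_function}, obtained by functional calculus in the AM-space $(E'')_x$; they are not weak cluster points of any $T$-orbit in $E$, so the ``defining feature of weak almost periodicity'' you invoke gives no purchase on whether they lie in $E$. Example~\ref{examp_markov_op_with_non_cyclic_per_point_spec} shows that for a general (power-bounded, Markov) positive operator this descent really does fail: the bidual construction yields an eigenvector for $-1$ upstairs while $-1\notin\sigma_{\pnt}(T)$. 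So any descent argument would have to use weak almost periodicity on the specific vectors $z^{[n]}$ in a way you have not indicated. Treat your option (a) as the proof and discard option (b).
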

\begin{proof}
	Let $\calS$ be the closure of $\{T^n: \, n \in \bbN_0\}$ in $\calL(E)$ with respect to the weak operator topology. Then $\calS$ is an abelian compact semi-topological semigroup. Denote by
	\begin{align*}
		\calK := \bigcap_{A \in \calS} A\calS
	\end{align*}
	the so-called \emph{Sushkevich kernel} of $\calS$. Then it can be shown that $\calK$ is an ideal in the semigroup $\calS$ and that $\calK$ is even a group (see \cite[Theorem 4.1 on p.\,104]{Krengel1985}). If $P$ denotes the neutral element in $\calK$, then $P$ is clearly a projection on $E$ and this projection is positive since $P \in \calS$; moreover, it can be shown that the range $\operatorname{rg}P$ of $P$ coincides with the closed linear span of all eigenvectors of $T$ belonging to unimodular eigenvalues (see \cite[Theorem~4.4 on p.\,105 and Theorem~4.5 on p.\,106]{Krengel1985}). Since $T$ commutes with $P$, it leaves its range $\operatorname{rg} P$ invariant. \par 
	Since $\calK$ is an ideal in the semigroup $\calS$ we have $TP \in \calK$ and thus, we can find an element $R \in \calK$ such that $TPR = RTP = P$. The operator $R$ is positive since it is contained in $\calK$, and for the same reason, it commutes with $P$. Hence, $R$ leaves $\operatorname{rg} P$ invariant, and we have $T|_{\operatorname{rg} P} R|_{\operatorname{rg} P} = R|_{\operatorname{rg} P} T|_{\operatorname{rg} P} = P|_{\operatorname{rg} P} = \operatorname{id}_{\operatorname{rg} P}$. Since $\operatorname{rg} P$ is the range of a positive projection, it is a Banach lattice with respect to some new norm (cf. \cite[Proposition III.11.5]{Schaefer1974}), hence $T|_{\operatorname{rg} P}$ and $R|_{\operatorname{rg} P}$ are positive, mutually inverse operators on the complex Banach lattice $\operatorname{rg} P$. Thus, $T|_{\operatorname{rg} P}$ is a lattice isomorphism, and we conclude from Proposition~\ref{prop_eigenvalues_and_funtions_of_lattice_hom}(b) that
	\begin{align*}
		\dim \ker(e^{i\theta}-T) & = \dim \ker(e^{i\theta}-T|_{\operatorname{rg} P}) \le \\
		& \le \dim \ker(e^{in\theta}-T|_{\operatorname{rg} P}) = \dim \ker(e^{in\theta}-T)
	\end{align*}
	for each $n \in \bbZ$ and each $\theta \in \bbR$.  
\end{proof}

A similar approach as in the above proof was used for the spectral analysis of positive $C_0$-semigroups in \cite{Keicher2008}. \par 

In the remainder of this section, we do not analyse the entire peripheral point spectrum of a positive operator, but the part of it which consists (up to rescaling by $r(T)$) of roots of unity.

\begin{definition} \label{def_rational_per_point_spec}
	Let $T$ be an operator on a complex Banach space $X$. Then we call the set
	\begin{align*}
		\{r(T) \cdot e^{i\theta}: \, \theta \in 2\pi \bbQ\} \cap \sigma_{\operatorname{pnt}}(T)
	\end{align*}
	the \emph{rational peripheral point spectrum} of $T$.
\end{definition}

The following result is essentially a single operator version of \cite[C-III.4.3(b)]{Arendt1986} (although the latter reference does not contain a dimension estimate).

\begin{proposition} \label{prop_all_powers_ergodic_implies_that_rational_per_point_spec_is_cyclic}
	Le $T$ be a positive operator on a complex Banach lattice $E$, $r(T) = 1$. If each power $T^N$ is mean ergodic, then we have $\dim \ker(e^{i\theta}-T) \le \dim \ker(e^{in\theta}-T)$ for each $\theta \in 2\pi\bbQ$ and each $n \in \bbZ$. In particular, the rational peripheral point spectrum of $T$ is cyclic.
\end{proposition}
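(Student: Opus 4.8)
The plan is to reduce to Proposition~\ref{prop_eigenvalues_and_funtions_of_lattice_hom} by passing to the fixed space of a suitable power of $T$, exactly as in the proof of Theorem~\ref{thm_cyclic_per_pnt_spec_almost_weakly_periodic}. Fix $\theta \in 2\pi\bbQ$; if $e^{i\theta}$ is not an eigenvalue of $T$ there is nothing to prove, so assume it is. Choose $N \in \bbN$ with $e^{iN\theta} = 1$. Then for \emph{every} $n \in \bbZ$ the number $e^{in\theta}$ also satisfies $(e^{in\theta})^N = 1$, so that any eigenvector of $T$ for the eigenvalue $e^{in\theta}$ lies in the fixed space $F := \ker(1 - T^N)$ of $T^N$; in other words $\ker(e^{in\theta} - T) \subseteq F$, and hence $\ker(e^{in\theta} - T) = \ker(e^{in\theta} - T|_F)$ for all $n \in \bbZ$.

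Next I would use the hypothesis on the power $T^N$. Since $T^N$ is mean ergodic, it admits a mean ergodic projection $P$ with $\operatorname{rg} P = \ker(1 - T^N) = F$; moreover $P$ is positive because $T^N$ is, and $P$ commutes with $T$ because it is the strong limit of the Ces\`aro means $\frac{1}{m}\sum_{k=0}^{m-1} T^{Nk}$, each of which commutes with $T$. Consequently $F = \operatorname{rg} P$ is $T$-invariant. By \cite[Proposition~III.11.5]{Schaefer1974} the space $F$, with the order induced from $E$, is a Banach lattice under an equivalent norm. Since $F = \ker(1 - T^N)$ we have $(T|_F)^N = \operatorname{id}_F$, so $T|_F$ is invertible on $F$ with inverse $(T|_F)^{N-1}$; both $T|_F$ and this inverse are positive, so $T|_F$ is a lattice isomorphism of $F$.

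Finally I would apply Proposition~\ref{prop_eigenvalues_and_funtions_of_lattice_hom}(b) to the lattice homomorphism $T|_F$: it yields $\dim \ker(e^{i\theta} - T|_F) \le \dim \ker(e^{in\theta} - T|_F)$ for every $n \in \bbZ$, which in view of the identification $\ker(e^{in\theta} - T) = \ker(e^{in\theta} - T|_F)$ established above is precisely the claimed inequality. Cyclicity of the rational peripheral point spectrum then follows at once: if $e^{i\theta}$ with $\theta \in 2\pi\bbQ$ is an eigenvalue of $T$, then $\dim \ker(e^{i\theta} - T) \ge 1$, hence $\dim \ker(e^{in\theta} - T) \ge 1$, i.e.\ $e^{in\theta} \in \sigma_{\pnt}(T)$, for every $n \in \bbZ$.

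As for the main obstacle: all computations are routine once the reduction is in place, so the only point requiring care is the passage to $F$ — checking that $P$ commutes with $T$ (so that $F$ is $T$-invariant), that $\operatorname{rg} P = \ker(1 - T^N)$, and that $F$ carries a Banach lattice structure on which $T|_F$ is a lattice isomorphism. (That Proposition~\ref{prop_eigenvalues_and_funtions_of_lattice_hom} was stated for order complete Banach lattices, whereas $F$ need not be order complete, causes no trouble here, just as in the proof of Theorem~\ref{thm_cyclic_per_pnt_spec_almost_weakly_periodic}, since the relevant argument only involves the principal ideal generated by the modulus of the eigenvector.)
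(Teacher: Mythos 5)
Your proof is correct and follows essentially the same route as the paper: reduce to the fixed space $F = \ker(1-T^N)$ of the mean ergodic power $T^N$, note that $F$ is a Banach lattice (being the range of the positive mean ergodic projection) on which $T|_F$ is a periodic, hence invertible with positive inverse, hence a lattice isomorphism, and then invoke Proposition~\ref{prop_eigenvalues_and_funtions_of_lattice_hom}(b). Your parenthetical remark about the order-completeness hypothesis in that proposition is a fair observation that the paper itself passes over silently.
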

\begin{proof}
	Let $\theta \in 2\pi \bbQ$ and suppose that $e^{i\theta}$ is an eigenvalue of $T$. We have $e^{iN\theta} = 1$ for some $N \in \bbN$ and thus, the eigenspaces $\ker(e^{i\theta}-T)$ and $\ker(e^{in\theta}-T)$ ($n \in \bbZ$) are contained in the fixed space $F:=\ker(1-T^N)$. Since $T^N$ is mean ergodic, it admits a mean ergodic projection $P$; this projection is clearly positive and has $F$ as its range. Therefore, $F$ is a complex Banach lattice with respect to an appropriate norm (this follows from \cite[Proposition III.11.5]{Schaefer1974}). \par 
	Moreover, the operator $T$ leaves $F$ invariant and its restriction $T|_F$ is periodic with period $N$. Since $(T|_F)^{-1} = (T|_F)^{N-1}$ is positive, too, $T|_F$ is a lattice isomorphism on the complex Banach lattice $F$. Therefore we conclude from Proposition~\ref{prop_eigenvalues_and_funtions_of_lattice_hom}(b) that
	\begin{align*}
		\dim \ker(e^{i\theta}-T) = \dim \ker(e^{i\theta}-T|_F) \le \dim \ker(e^{in\theta}-T|_F) = \dim \ker(e^{in\theta}-T)
	\end{align*}
	for each $n \in \bbZ$.  
\end{proof}

It would be interesting to know whether in the situation of Proposition~\ref{prop_all_powers_ergodic_implies_that_rational_per_point_spec_is_cyclic} the entire peripheral point spectrum is cyclic. \par 

Concerning Proposition~\ref{prop_all_powers_ergodic_implies_that_rational_per_point_spec_is_cyclic} one might ask under which conditions one can ensure that all powers of $T$ are mean ergodic. We point out that on Banach lattices with order continuous norm, all powers of $T$ are mean ergodic if $T$ itself is mean ergodic (see \cite[Theorem~2.1.14 and the comment after Theorem~2.1.5]{Emelyanov2007a}). However, on those spaces Theorem~\ref{thm_cyclic_per_pnt_spec_ergodic_order_cont_norm} yields much stronger results then Proposition~\ref{prop_all_powers_ergodic_implies_that_rational_per_point_spec_is_cyclic} anyway. \par 

On order complete Banach lattices the conclusion of Proposition~\ref{prop_all_powers_ergodic_implies_that_rational_per_point_spec_is_cyclic} still holds if we only assume the operator $T$ itself to be mean ergodic. We show this in the following proposition; however, to do so, we need to use a result from the subsequent section.

\begin{proposition} \label{prop_ergodic_op_on_order_complete_space_rational_per_pnt_spec}
	Le $T$ be a positive operator on a complex Banach lattice $E$, $r(T) = 1$. If $T$ is mean ergodic and $E$ is order complete, then we have $\dim \ker(e^{i\theta}-T) \le \dim \ker(e^{in\theta}-T)$ for each $\theta \in 2\pi\bbQ$ and each $n \in \bbZ$. In particular, the rational peripheral point spectrum of $T$ is cyclic.
\end{proposition}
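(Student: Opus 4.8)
The plan is to imitate the proof of Theorem~\ref{thm_cyclic_per_pnt_spec_ergodic_order_cont_norm}, but to compensate for the missing order continuity of the norm by localising to a principal ideal and then using the structure theory of Section~\ref{section_fixed_space_markov} (via Corollary~\ref{cor_fixed_space_of_markov_op_real}) in place of Corollary~\ref{cor_dominated_eigenvector_order_cont_norm}. Fix $\theta \in 2\pi\bbQ$ and assume $e^{i\theta} \in \sigma_{\pnt}(T)$; choose $N \in \bbN$ with $e^{iN\theta} = 1$. Let $\dim\ker(e^{i\theta}-T) \ge m \in \bbN$ and pick linearly independent $z_1,\dots,z_m \in \ker(e^{i\theta}-T)$. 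Exactly as in the earlier proofs, $|z_j| = |Tz_j| \le T|z_j|$, so $(T^k|z_j|)_{k \in \bbN_0}$ is non-decreasing; writing $P$ for the mean ergodic projection of $T$ (whose range is $\ker(1-T)$), the convergence $\frac{1}{n}\sum_{k=0}^{n-1}T^k|z_j| \to P|z_j|$ together with closedness of the positive cone gives $|z_j| \le P|z_j| \in \ker(1-T)$. I would then set $x := P|z_1| + \dots + P|z_m|$, so that $x \ge 0$, $x \in \ker(1-T)$ and $|z_j| \le x$; hence $z_1,\dots,z_m \in E_x$.

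Next I would pass to the principal ideal $E_x$. Under its order-unit norm, $E_x$ is a complex AM-space with unit $x$ (the corollary of \cite[Proposition~II.7.2]{Schaefer1974}), and, being an order ideal of the order complete lattice $E$, it is itself order complete; thus $E_x$ is an order complete $C(K;\bbC)$-space with unit $x$. Because $Tx = x$ and $T \ge 0$, the operator $T$ maps $E_x$ into $E_x$ and $T|_{E_x}$ is a Markov operator on $E_x$; hence so is $(T|_{E_x})^N = T^N|_{E_x}$. Let $F := \ker(1 - T^N|_{E_x})$ be its fixed space. By Corollary~\ref{cor_fixed_space_of_markov_op_real}, $F$, endowed with a suitable equivalent norm, is an order complete complex AM-space with unit $x$; in particular $F$ is a complex Banach lattice.

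To conclude, note that $T|_{E_x}$ commutes with $T^N|_{E_x}$ and so leaves $F$ invariant, and that $(T|_F)^N = \operatorname{id}_F$; thus $T|_F$ is bijective with positive inverse $(T|_F)^{N-1}$, and a positive bijection with positive inverse on a Banach lattice is a lattice isomorphism — so $T|_F$ is a lattice homomorphism on the order complete Banach lattice $F$. Each $z_j$ lies in $E_x$ and satisfies $T^N z_j = e^{iN\theta}z_j = z_j$, hence $z_1,\dots,z_m \in F$ and $\dim\ker(e^{i\theta}-T|_F) \ge m$. Applying Proposition~\ref{prop_eigenvalues_and_funtions_of_lattice_hom}(b) to $T|_F$ then gives $\dim\ker(e^{in\theta}-T|_F) \ge \dim\ker(e^{i\theta}-T|_F) \ge m$ for every $n \in \bbZ$, and since $\ker(e^{in\theta}-T|_F) \subseteq \ker(e^{in\theta}-T)$ we obtain $\dim\ker(e^{in\theta}-T) \ge m$. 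As $m$ was an arbitrary lower bound for $\dim\ker(e^{i\theta}-T)$, this yields the asserted estimate, and cyclicity of the rational peripheral point spectrum is immediate.

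The computation is light once the right reduction is in place; the only real obstacle is recognising that one must first pass to the principal ideal $E_x$ and then to the fixed space $F = \ker(1 - T^N|_{E_x})$ before any lattice structure is available — on $E$ itself there is no reason for $\ker(1-T^N)$ to be a Banach lattice. The two supporting facts used here, namely that a principal ideal inherits order completeness from $E$ and that the fixed space of the Markov operator $T^N|_{E_x}$ is again a Banach lattice, are elementary, the latter being precisely Corollary~\ref{cor_fixed_space_of_markov_op_real}; a referee would likely prefer this packaged as a single statement about Markov operators (presumably the result from Section~\ref{section_per_pnt_spec_markov} the author alludes to), but the argument above is self-contained given the material already developed.
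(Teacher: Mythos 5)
Your proof is correct and follows essentially the same route as the paper: the paper also constructs $x = P|z_1| + \dots + P|z_m|$, passes to the order complete principal ideal $E_x$ on which $T$ restricts to a Markov operator, and then invokes Proposition~\ref{prop_markov_op_on_order_complete_space_rational_per_pnt_spec} — whose proof is exactly the argument you inline (fixed space of $T^N$, Corollary~\ref{cor_fixed_space_of_markov_op_real}, periodicity making $T|_F$ a lattice isomorphism, then Proposition~\ref{prop_eigenvalues_and_funtions_of_lattice_hom}(b)). Your closing remark is on point: the only difference is that the paper packages the Markov-operator step as that separate proposition rather than unfolding it.
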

\begin{proof}
	Let $e^{i\theta}$ ($\theta \in 2\pi \bbQ$) be an eigenvalue of $T$ and let $0 \not= z \in E$ be a corresponding eigenvector. If $P$ denotes the mean ergodic projection of $T$, then we have $P|z| \in \ker(1-T)$ and $P|z| = \lim_{n \to \infty} \frac{1}{n} \sum_{k=0}^{n-1} T^k|z| \ge |z|$. \par 
	Now let $\dim \ker(e^{i\theta}-T) \ge m \in \bbN$ and choose $m$ linearly independent vectors $z_1,...,z_m \in \ker(e^{i\theta}-T)$. We define $x = P|z_1| + ... + P|z_m|$ and as shown above, $x$ is an element of the fixed space  $\ker(1-T)$ and dominates all vectors $|z_1|,...,|z_m|$. If endowed with an appropriate norm the principal ideal $E_x$ is an AM-space with unit $x$ (see \cite[the corollary of Proposition~II.7.2]{Schaefer1974}) and the operator $T|_{E_x}$ is a Markov operator on $E_x$. Since $E$ is order complete, so is $E_x$ and we can thus conclude from Proposition~\ref{prop_markov_op_on_order_complete_space_rational_per_pnt_spec} that
	\begin{align*}
		m \le \dim\ker(e^{i\theta}-T|_{E_x}) \le \dim\ker(e^{in\theta}-T|_{E_x}) \le \dim \ker(e^{in\theta}-T)
	\end{align*}
	for each $n \in \bbZ$. This proves the assertion.  
\end{proof}

\section{The peripheral point spectrum of Markov operators} \label{section_per_pnt_spec_markov}

In this section we focus on Markov operators on $C(K;\bbC)$-spaces (where $K$ is some compact Hausdorff space). Recall from the introduction that a bounded linear operator $T$ on $E = C(K;\bbC)$ is called a \emph{Markov operator} if $T$ is positive and $T\mathbbm{1} = \mathbbm{1}$.

\begin{theorem} \label{thm_dual_markov_op_on_c_k_has_cyclic_per_point_spec}
	Let $T$ be a Markov operator on a $C(K;\bbC)$-space. If $C(K;\bbC)$ has a pre-dual Banach lattice and $T$ has a pre-adjoint, then we have $\dim \ker(e^{i\theta}-T) \le \dim \ker(e^{in\theta}-T)$ for each $n \in \bbZ$ and each $\theta \in \bbR$. In particular, the peripheral point spectrum of $T$ is cyclic.
\end{theorem}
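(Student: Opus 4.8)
The plan is to deduce the statement directly from Theorem~\ref{thm_cyclcic_per_pnt_spec_ws_bounded_and_dual}; essentially all that is needed is to verify that its hypotheses are automatically satisfied for a Markov operator. First I would record that every Markov operator $T$ on a $C(K;\bbC)$-space is a positive contraction with $r(T) = 1$. Indeed, for $f \in C(K;\bbC)$ one has $|Tf| \le T|f| \le \|f\|_\infty\, T\mathbbm{1} = \|f\|_\infty \mathbbm{1}$, whence $\|Tf\|_\infty \le \|f\|_\infty$; since equality holds at $f = \mathbbm{1}$, we get $\|T\| = 1$. As $\mathbbm{1}$ is an eigenvector of $T$ for the eigenvalue $1$, we have $1 \in \sigma_{\pnt}(T) \subseteq \sigma(T)$, and therefore $r(T) = 1$ rather than merely $r(T) \le 1$.

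Next I would observe that every power $T^n$ is again a Markov operator, so the estimate above gives $\|T^n\| = 1$ for all $n \in \bbN_0$; thus $T$ is power-bounded, and by Examples~\ref{examp_ws_bounded}(a) it is in particular (WS)-bounded. Finally, the underlying Banach lattice $E = C(K;\bbC)$ satisfies the two remaining hypotheses of Theorem~\ref{thm_cyclcic_per_pnt_spec_ws_bounded_and_dual} by assumption, namely that $E$ has a pre-dual Banach lattice and that $T$ has a pre-adjoint. Applying that theorem then yields $\dim\ker(e^{i\theta} - T) \le \dim\ker(e^{in\theta} - T)$ for every $n \in \bbZ$ and every $\theta \in \bbR$, and hence the cyclicity of $\sigma_{\per,\pnt}(T)$.

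I do not expect a genuine obstacle here: the result is a specialization of Theorem~\ref{thm_cyclcic_per_pnt_spec_ws_bounded_and_dual} to the class of Markov operators, for which the boundedness condition (power-boundedness, hence (WS)-boundedness) and the normalization $r(T) = 1$ come for free. The only point worth stating explicitly, even though it is elementary, is the identification $r(T) = 1$ via the fixed vector $\mathbbm{1}$, since the growth and spectral-radius hypotheses of the quoted theorem are phrased in terms of $r(T) = 1$.
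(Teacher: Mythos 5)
Your proof is correct, but it takes a different (slightly longer) route than the paper. The paper's own proof is a one-liner: it applies Theorem~\ref{thm_dominated_eigenvector_dual} directly with $x := \mathbbm{1}$. Since $\mathbbm{1}$ is a unit of $C(K;\bbC)$, the principal ideal $E_{\mathbbm{1}}$ is all of $E$, and since $T\mathbbm{1} = \mathbbm{1}$ one has $0 < \mathbbm{1} \in \ker(1-T)$, so the dimension estimate of that theorem becomes exactly the asserted inequality $\dim\ker(e^{i\theta}-T) \le \dim\ker(e^{in\theta}-T)$ with no further work. Your route instead passes through Theorem~\ref{thm_cyclcic_per_pnt_spec_ws_bounded_and_dual}, whose proof has to manufacture a dominating fixed vector for the moduli of the eigenvectors via Lemma~\ref{lem_ws_bounded_op_monotone_vector} and a weak${}^*$-limit; for a Markov operator that machinery is unnecessary because $\mathbbm{1}$ already dominates everything. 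That said, your argument is complete and sound --- the verification that $\|T\|=1$, $r(T)=1$ via the fixed vector $\mathbbm{1}$, and power-boundedness (hence (WS)-boundedness) are all correct --- and the paper itself remarks immediately after its proof that the theorem can be viewed as a special case of Theorem~\ref{thm_cyclcic_per_pnt_spec_ws_bounded_and_dual} in precisely the way you describe. What the paper's approach buys is economy; what yours buys is a uniform derivation from the more general boundedness framework.
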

\begin{proof}
	This follows immediately from Theorem~\ref{thm_dominated_eigenvector_dual} if we set $x := \mathbbm{1}$.  
\end{proof}

Note that we could also interpret the above theorem as a special case of Theorem~\ref{thm_cyclcic_per_pnt_spec_ws_bounded_and_dual} since every Markov operator is power-bounded and thus (WS)-bounded.

The assertion of Theorem~\ref{thm_dual_markov_op_on_c_k_has_cyclic_per_point_spec} fails if $C(K,\bbC)$ is not assumed to have a pre-dual. This is demonstrated by the following example.

\begin{example} \label{examp_markov_op_with_non_cyclic_per_point_spec}
	There is a $C(K;\bbC)$-space $E$ and a Markov operator $T$ on $E$ such that $i$ is an eigenvalue of $T$, but $-1$ is not. \par 
	Indeed let $\overline{\mathbb{N}}_0 = \mathbb{N}_0 \cup \{\infty\}$ be the one-point compactification of the discrete space $\mathbb{N}_0$ and let $\mathbb{Z}_4 := \{\bar 0, \bar 1, \bar 2, \bar 3\}$ be endowed with the discrete topology and the addition modulo $4$. We set $K := \mathbb{Z}_4 \, \dot{\cup} \; \overline{\mathbb{N}}_0$, $E := C(K;\bbC)$ and we define an operator $T \in \calL(E)$ by
	\begin{align*}
		(Tf)(k) =
		\begin{cases}
			f(k-1) \quad & \text{if } k \in \mathbb{Z}_4 \\
			f(k-1) \quad & \text{if } k \in \mathbb{N} \cup \{\infty\} \\
			\frac{1}{2}\big(f(\bar 1) + f(\bar 3)\big) \quad & \text{if } k = 0 \text{.}
		\end{cases}
	\end{align*}
	Then $T$ is indeed a Markov operator on $E$. Note that $i$ is an eigenvalue of $T$ with a corresponding eigenfunction $g \in C(K)$ given by
	\begin{align*}
		g(k) =
		\begin{cases}
			(-i)^j \quad & \text{for } k \in \mathbb{Z}_4 \text{ and } k = \bar j \\
			0 \quad & \text{else.}
		\end{cases}
	\end{align*}
	On the other hand, $-1$ is not an eigenvalue of $T$. Indeed, assume for a contradiction that $Th = -h$ for a function $0 \not= h \in C(K)$. Then we must have $h(\bar 0) \not= 0$, since $h(\bar 0) = 0$ would imply $h(k) = 0$ for all $k \in K$. Hence, we may assume that $h(\bar 0) = 1$ and thus we obtain
	\begin{align*}
		h(\bar 0) = h(\bar 2) = 1 \quad \text{and} \quad h(\bar 1) = h(\bar 3) = -1 \text{,}
	\end{align*}
	which implies $h(0) = 1$. Therefore, $h(k) = (-1)^k$ for all $k \in \mathbb{N}_0$, which contradicts the continuity of $h$ at $\infty$. \par
\end{example}

The preceding example appeared previously in the preprint collection \cite{Gluck2014}. 

\begin{remark} \label{rem_adaption_of_markov_op_example}
	From Example~\ref{examp_markov_op_with_non_cyclic_per_point_spec} we can easily obtain another example of a contractive positive operator $T$ on an AM-space with order continuous norm such that $r(T) = 1$ but such that $\sigma_{\per,\pnt}(T)$ is not cyclic. \par 
	Indeed, let $K$, $E$ and $T$ be as in Example~\ref{examp_markov_op_with_non_cyclic_per_point_spec}. By $F$ we denote the closed subspace in $E$ of functions $f \in E$ which fulfil $f(\infty) = 0$. Then $F$ is a complex Banach lattice (in fact, an AM-space) and has order continuous norm since it is isometrically lattice isomorphic to the space $c_0(\bbN; \bbC)$ of sequences which converge to $0$. The operator $T$ leaves $F$ invariant, and the restriction $T|_F$ is a contraction which fulfils $i,-i \in \sigma_{\pnt}(T|_F)$ but $-1,1 \not\in \sigma_{\pnt}(T|_F)$. \par 
	Note that by Theorem~\ref{thm_cyclic_per_pnt_spec_ergodic_order_cont_norm} this shows that $T|_F$ is not mean ergodic (and thus $T$ cannot be mean ergodic, either).
\end{remark}

If $C(K;\bbC)$ does not have a pre-dual Banach lattice, but is still order complete (or if $C(K;\bbC)$ has a pre-dual, but $T$ does not have a pre-adjoint) we can still prove a result on the rational peripheral point spectrum (cf.~Definition~\ref{def_rational_per_point_spec}):

\begin{proposition} \label{prop_markov_op_on_order_complete_space_rational_per_pnt_spec}
	Let $E$ be an order complete $C(K;\bbC)$-space and let $T$ be a Markov operator on $E$. Then we have $\dim \ker(e^{i\theta}-T) \le \dim \ker(e^{in\theta}-T)$ for each $\theta \in 2\pi\bbQ$ and each $n \in \bbZ$. In particular, the rational peripheral point spectrum of $T$ is cyclic.
\end{proposition}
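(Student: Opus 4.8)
The plan is to reduce everything to the case of a lattice homomorphism by passing to the fixed space of a suitable power of $T$. Since $\theta \in 2\pi\bbQ$ there is some $N \in \bbN$ with $e^{iN\theta} = 1$; then every eigenvector of $T$ for any of the eigenvalues $e^{in\theta}$ ($n \in \bbZ$) satisfies $T^N z = e^{inN\theta} z = z$, so all the eigenspaces in question are contained in $F := \ker(1-T^N)$. Hence it suffices to control the dimensions of the eigenspaces of the restriction $T|_F$, and for this we want to invoke Proposition~\ref{prop_eigenvalues_and_funtions_of_lattice_hom}(b).

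First I would observe that $T^N$ is again a Markov operator on the order complete $C(K;\bbC)$-space $E$. Thus Corollary~\ref{cor_fixed_space_of_markov_op_real}, applied to $T^N$ in place of $T$, shows that $F = \ker(1-T^N)$, endowed with an appropriate equivalent norm, is a complex order-complete AM-space with unit $\mathbbm{1}$; in particular $F$ is an order complete complex Banach lattice.

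Next I would note that $T$ commutes with $T^N$ and therefore leaves $F$ invariant, and that the restriction satisfies $(T|_F)^N = \operatorname{id}_F$, so $T|_F$ is invertible with inverse $(T|_F)^{N-1}$. Both $T|_F$ and its inverse are positive, hence $T|_F$ is a lattice isomorphism of the order complete complex Banach lattice $F$. Now Proposition~\ref{prop_eigenvalues_and_funtions_of_lattice_hom}(b), applied to the lattice homomorphism $T|_F$ and the angle $\theta$, gives $\dim \ker(e^{i\theta}-T|_F) \le \dim \ker(e^{in\theta}-T|_F)$ for every $n \in \bbZ$. Since $\ker(e^{im\theta}-T) = \ker(e^{im\theta}-T|_F)$ for every $m \in \bbZ$ (these eigenspaces lie in $F$, as noted above), this is exactly the desired inequality, and the cyclicity of the rational peripheral point spectrum follows at once.

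There is essentially no analytic obstacle here: the argument is a short reduction, and the substantive content has already been isolated in Theorem~\ref{thm_fixed_space_of_markov_op_real}/Corollary~\ref{cor_fixed_space_of_markov_op_real} and in Proposition~\ref{prop_eigenvalues_and_funtions_of_lattice_hom}. The only point requiring a moment's care is checking that $F$ carries enough structure (order completeness together with the AM-unit) to feed into Proposition~\ref{prop_eigenvalues_and_funtions_of_lattice_hom}, and this is precisely what Corollary~\ref{cor_fixed_space_of_markov_op_real} supplies once one recognizes that $T^N$ is a Markov operator on the same space. The proof thus parallels that of Proposition~\ref{prop_all_powers_ergodic_implies_that_rational_per_point_spec_is_cyclic}, with Corollary~\ref{cor_fixed_space_of_markov_op_real} playing the role that the mean ergodic projection played there.
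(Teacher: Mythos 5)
Your proof is correct and follows essentially the same route as the paper: pass to the fixed space $F=\ker(1-T^N)$ of the Markov operator $T^N$, use Corollary~\ref{cor_fixed_space_of_markov_op_real} to equip $F$ with the structure of an order complete complex Banach lattice, observe that $T|_F$ is a lattice isomorphism because it is periodic with positive inverse $(T|_F)^{N-1}$, and conclude via Proposition~\ref{prop_eigenvalues_and_funtions_of_lattice_hom}(b). Your explicit remark that the order completeness of $F$ is needed to invoke Proposition~\ref{prop_eigenvalues_and_funtions_of_lattice_hom} is a welcome clarification of a point the paper leaves implicit.
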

\begin{proof}
	Let $e^{i\theta}$ ($\theta \in 2\pi\bbQ$) be an eigenvalue of $T$ and choose $N \in \bbN$ such that $e^{iN\theta} = 1$. The operator $T^N$ is again a Markov operator, so its fixed space $F := \ker(1-T^N)$ is a complex Banach lattice with respect to an appropriate norm by Corollary~\ref{cor_fixed_space_of_markov_op_real}. The operator $T$ leaves $F$ invariant and $T|_F$ is periodic with period $N$. Thus, $T|_F$ is invertible with inverse $T|_F^{N-1}$. As its inverse operator is positive, $T|_F$ is a lattice isomorphism on $F$. Since the eigenspaces $\ker(e^{i\theta}-T)$ and $\ker(e^{in\theta}-T)$ ($n \in \bbZ$) are contained in $F$, we conclude by using Proposition~\ref{prop_eigenvalues_and_funtions_of_lattice_hom}(b) that
	\begin{align*}
		\dim \ker(e^{i\theta}-T) = \dim \ker(e^{i\theta}-T|_F) \le \dim \ker(e^{in\theta}-T|_F) = \dim \ker(e^{in\theta}-T)
	\end{align*}
	for each $n \in \bbZ$.  
\end{proof}

Again, it would be interesting to know whether in Proposition~\ref{prop_markov_op_on_order_complete_space_rational_per_pnt_spec} the entire peripheral point spectrum is cyclic. In any case, Example~\ref{examp_markov_op_with_non_cyclic_per_point_spec} shows that one cannot drop the condition on $C(K,\bbC)$ to be order complete in Proposition~\ref{prop_markov_op_on_order_complete_space_rational_per_pnt_spec}, even if we only consider the rational peripheral point spectrum.

\section{The peripheral spectrum of positive operators} \label{section_per_spec}

Now we use our results on the peripheral point spectrum from Section~\ref{section_per_pnt_spec_general} to analyse the peripheral spectrum of positive operators. Hardly surprising, almost all results in the current section are based on ultra power constructions. Let us fix some notation for these constructions: If $\calU$ is a free ultra filter on $\bbN$ and $X$ is a Banach space, then we denote by $X_\calU$ the ultra power of $X$ with respect to $\calU$. For a sequence $x = (x_n) \in l^\infty(\bbN;X)$ we denote by $x_\calU := (x_n)_\calU$ the equivalence class of $x$ in $X_\calU$. If $T \in \calL(X)$ then $T_\calU$ denotes the lifting of $T$ to $X_\calU$ given by $T_\calU x_\calU = (Tx_n)_\calU$ for each $x = (x_n) \in l^\infty(\bbN;X)$. \par 

In \cite[Theorem~4.7]{Lotz1968} Lotz proved that an Abel-bounded positive operator on a complex Banach lattice has cyclic peripheral spectrum (see also \cite[the first part of the proof of Theorem~V.4.9]{Schaefer1974} for an English presentation of the proof). Using the same technique, Scheffold showed in \cite[Satz~3.6]{Scheffold1971} that a positive operator $T$, $r(T) = 1$, also has cyclic peripheral spectrum if $\liminf_{n \to \infty} ||T^n|| < \infty$. In \cite[p.\,157]{Grobler1995} Grobler asked whether other boundedness conditions might be sufficient to prove the same result. We show now that this is possible for (WS)-bounded operators:

\begin{theorem} \label{thm_ws_bounded_op_zykl_per_spec}
	Let $T$ be a positive operator on a complex Banach lattice $E$, $r(T) = 1$. If $T$ is (WS)-bounded, then the peripheral spectrum of $T$ is cyclic.
\end{theorem}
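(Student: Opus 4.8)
The plan is to reduce the statement about the peripheral spectrum to the already-established results about the peripheral point spectrum by passing to an ultra power. First I would fix a free ultra filter $\calU$ on $\bbN$ and form the ultra power $E_\calU$ together with the lifted operator $T_\calU \in \calL(E_\calU)$. Recall that $E_\calU$ is again a complex Banach lattice, that $T_\calU$ is positive, and that $r(T_\calU) = r(T) = 1$. The classical reason for using ultra powers here is the well-known fact (see e.g.\ \cite[Section~V.1]{Schaefer1974} or the proof of \cite[Theorem~V.4.9]{Schaefer1974}) that every approximate eigenvalue of $T$ — and in particular every point of $\sigma_{\per}(T)$, since boundary spectral values are always approximate eigenvalues — is a genuine eigenvalue of $T_\calU$; conversely $\sigma_{\pnt}(T_\calU) \subseteq \sigma(T)$, so that $\sigma_{\per}(T) \subseteq \sigma_{\per,\pnt}(T_\calU) \subseteq \sigma(T)$, and on the unit circle this squeezes $\sigma_{\per}(T)$ between $\sigma_{\per,\pnt}(T_\calU)$ and $\sigma_{\per}(T_\calU)$, all of which therefore coincide on $\bbT$.

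The second, and genuinely essential, step is to verify that $T_\calU$ inherits (WS)-boundedness from $T$. Since $T$ is (WS)-bounded there is a weighting scheme $(f_j)_{j \in J}$ with $\sup_{j} \|f_j(T)\| =: M < \infty$. Writing $f_j(z) = \sum_{k=0}^\infty a_{j,k} z^k$, one has $f_j(T) = \sum_{k=0}^\infty a_{j,k} T^k$ with convergence in operator norm, and the analytic functional calculus commutes with the canonical lifting, so $f_j(T_\calU) = \sum_{k=0}^\infty a_{j,k} T_\calU^k = (f_j(T))_\calU$; hence $\|f_j(T_\calU)\| \le \|f_j(T)\| \le M$ for all $j$, and $T_\calU$ is (WS)-bounded with the same weighting scheme. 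Now $E_\calU$ has order continuous norm only in trivial cases, so Theorem~\ref{thm_cyclic_per_pnt_spec_ergodic_order_cont_norm} does not apply directly; instead I would invoke Theorem~\ref{thm_cyclcic_per_pnt_spec_ws_bounded_and_dual} applied to a suitable bidual (or, more cleanly, observe that $E_\calU$ can be further embedded so that the (WS)-bounded operator acts on a dual lattice with a pre-adjoint — this is exactly the mechanism already exploited in Corollary~\ref{cor_dominated_eigenvector_order_cont_norm}). The cleanest route is: apply the already-proven Theorem~\ref{thm_cyclic_per_pnt_spec_ws_bounded_on_kb_space} is not available either since $E_\calU$ need not be a KB-space, so one passes to $(E_\calU)''$ with the bi-adjoint $(T_\calU)''$, which is again positive, (WS)-bounded, has spectral radius $1$, lives on a dual Banach lattice and has a pre-adjoint, whence Theorem~\ref{thm_cyclcic_per_pnt_spec_ws_bounded_and_dual} gives that $\sigma_{\per,\pnt}((T_\calU)'')$ is cyclic; and since $\sigma_{\per,\pnt}(T_\calU) \subseteq \sigma_{\per,\pnt}((T_\calU)'')$ while on the other hand every peripheral approximate eigenvalue of $T$ is an eigenvalue of $T_\calU$, cyclicity is transported back down.

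Finally I would assemble the chain: for $re^{i\varphi} \in \sigma_{\per}(T)$ (so $r = 1$) we get $e^{i\varphi} \in \sigma_{\per,\pnt}(T_\calU) \subseteq \sigma_{\per,\pnt}((T_\calU)'')$, which is cyclic, so $e^{in\varphi} \in \sigma_{\per,\pnt}((T_\calU)'') \subseteq \sigma((T_\calU)'') = \sigma(T_\calU) = \sigma(T)$ for every $n \in \bbZ$; since $|e^{in\varphi}| = 1 = r(T)$ this means $e^{in\varphi} \in \sigma_{\per}(T)$, proving cyclicity of $\sigma_{\per}(T)$. The main obstacle I anticipate is not the ultra power mechanics — which are routine once one has the eigenvalue-lifting lemma for boundary spectrum — but rather the bookkeeping needed to land $T_\calU$ (or a further lift of it) in a setting where one of the peripheral-point-spectrum theorems of Section~\ref{section_per_pnt_spec_general} literally applies; the delicate point is that ultra powers destroy order continuity of the norm, so one cannot use the most convenient theorem and must instead route through a dual lattice, checking carefully that the bi-adjoint of a (WS)-bounded operator is again (WS)-bounded (which again follows from $f_j(T'') = f_j(T)''$ and $\|f_j(T)''\| = \|f_j(T)\|$).
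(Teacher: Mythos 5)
Your proposal is correct and follows essentially the same route as the paper: pass to an ultra power to turn the peripheral (approximate point) spectrum into point spectrum, then dualize to land in the hypotheses of Theorem~\ref{thm_cyclcic_per_pnt_spec_ws_bounded_and_dual}, using that the functional calculus commutes with lifting and adjoints so that (WS)-boundedness is preserved. The only cosmetic difference is that you work with $(T_\calU)''$ on $(E_\calU)''$ via the canonical embedding of $E_\calU$ into its bidual, whereas the paper works directly with $(T_\calU)'$ on $(E_\calU)'$ and uses the embedding $(E')_\calU \to (E_\calU)'$ of Lemma~\ref{lem_ultra_power_dual_embedding} to see that $\sigma_{\per}((T_\calU)')$ consists of eigenvalues.
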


For the proof of the theorem we do not use the technique developed by Lotz in \cite[Section~4]{Lotz1968} (in fact, it is not clear to the author whether Theorem~\ref{thm_ws_bounded_op_zykl_per_spec} can be proved by this technique), but we rather employ Theorem~\ref{thm_cyclcic_per_pnt_spec_ws_bounded_and_dual} together with the following duality result on ultra powers. 

\begin{lemma} \label{lem_ultra_power_dual_embedding}
	Let $E$ be a complex Banach lattice and let $\calU$ be a free ultra filter on $\bbN$.
	\begin{enumerate}[(a)]
		\item The canonical embedding
			\begin{align*}
				j: (E')_\calU \to (E_\calU)' \text{,} \quad \langle j((x'_n)_\calU), (x_n)_\calU \rangle = \lim_\calU \langle x'_n,x_n\rangle
			\end{align*}
			is isometric and positive. \par 
		\item We have $(T_\calU)' \circ j = j \circ (T')_\calU$. \par 
		\item In particular, the peripheral spectrum of $(T_\calU)'$ consists of eigenvalues of $(T_\calU)'$.
	\end{enumerate}
\end{lemma}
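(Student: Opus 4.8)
The plan is to verify the three assertions in order, since (b) builds on (a) and (c) follows from (a) and (b) together with Theorem~\ref{thm_cyclcic_per_pnt_spec_ws_bounded_and_dual}. For part (a), the first thing to check is that $j$ is well-defined: if $(x_n') \in l^\infty(\bbN;E')$ represents the zero element of $(E')_\calU$, i.e. $\lim_\calU \|x_n'\| = 0$, then $\lim_\calU \langle x_n',x_n\rangle = 0$ for every bounded sequence $(x_n)$, and a similar argument handles the dependence on the representative of $(x_n)_\calU$; one also checks that the formula indeed defines a bounded functional on $E_\calU$ of norm at most $\liminf_\calU \|x_n'\|$. For the isometry, given $\varepsilon > 0$ pick for each $n$ a unit vector $x_n \in E$ with $\langle x_n', x_n\rangle \ge (1-\varepsilon)\|x_n'\|$ (real part, after a unimodular rotation); then $(x_n)_\calU$ is a unit vector in $E_\calU$ and $\langle j((x_n')_\calU),(x_n)_\calU\rangle = \lim_\calU\langle x_n',x_n\rangle \ge (1-\varepsilon)\lim_\calU\|x_n'\| = (1-\varepsilon)\|(x_n')_\calU\|$, so $\|j((x_n')_\calU)\| \ge \|(x_n')_\calU\|$, and combined with the reverse inequality above we get equality. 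Positivity of $j$ is immediate: if $x_n' \ge 0$ for all $n$ (which represents a positive element of $(E')_\calU$ up to passing to a representative, using that the positive cone is closed) and $x_n \ge 0$, then $\langle x_n',x_n\rangle \ge 0$, hence the limit along $\calU$ is $\ge 0$.

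For part (b), one simply unwinds the definitions. For $(x_n')_\calU \in (E')_\calU$ and $(x_n)_\calU \in E_\calU$ we compute
\begin{align*}
	\langle (T_\calU)'\, j((x_n')_\calU), (x_n)_\calU\rangle
	&= \langle j((x_n')_\calU), T_\calU (x_n)_\calU\rangle
	= \langle j((x_n')_\calU), (Tx_n)_\calU\rangle \\
	&= \lim_\calU \langle x_n', Tx_n\rangle
	= \lim_\calU \langle T'x_n', x_n\rangle
	= \langle j((T'x_n')_\calU), (x_n)_\calU\rangle \\
	&= \langle j\,(T')_\calU (x_n')_\calU, (x_n)_\calU\rangle \text{,}
\end{align*}
and since $(x_n)_\calU$ was arbitrary this gives $(T_\calU)' \circ j = j \circ (T')_\calU$.

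For part (c), recall the general principle that the approximate point spectrum of $T_\calU$ coincides with the point spectrum of $T_\calU$, and that for any operator the topological boundary of the spectrum is contained in the approximate point spectrum; applying this to $(T_\calU)'$ together with the fact that $\sigma((T_\calU)') = \sigma(T_\calU)$ and that the peripheral spectrum lies on the boundary of the spectrum, the peripheral spectrum of $(T_\calU)'$ consists of approximate eigenvalues. But more is needed here: I would instead argue via part (b). The operator $(T')_\calU$ on $(E')_\calU$ is a positive operator whose spectral radius equals $1$; if $T$ is additionally (WS)-bounded, then so is $(T')_\calU$ (a weighting-scheme bound for $T$ lifts to one for $T'$ and then to one for $(T')_\calU$), so Theorem~\ref{thm_cyclcic_per_pnt_spec_ws_bounded_and_dual} shows the peripheral point spectrum of $(T')_\calU$ is nonempty on each peripheral spectral value; transporting eigenvectors across the embedding $j$ via (b) then realizes each peripheral spectral value of $(T_\calU)'$ as an eigenvalue. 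The main obstacle I anticipate is being careful about which hypotheses (c) really requires: as stated the lemma has no (WS)-boundedness assumption, so the cleanest route is the soft spectral-theory one — the peripheral spectrum always lies on the boundary of $\sigma((T_\calU)')$, hence in the approximate point spectrum, and approximate eigenvalues of a dual operator on an ultrapower are genuine eigenvalues because $E_\calU$ (and hence its relationship with the bidual) makes the relevant quotient/ultraproduct argument go through. I would present the boundary-plus-ultrapower argument as the proof of (c), keeping the link to Theorem~\ref{thm_cyclcic_per_pnt_spec_ws_bounded_and_dual} for the subsequent application in Theorem~\ref{thm_ws_bounded_op_zykl_per_spec} rather than inside the lemma.
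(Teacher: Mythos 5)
Parts (a) and (b) of your proposal are fine and essentially coincide with what the paper does (the paper simply cites Heinrich for the isometry of $j$ and leaves the intertwining relation as an exercise; your computations fill in exactly those details).

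Part (c) is where there is a genuine gap. You correctly observe that every peripheral spectral value of $(T_\calU)'$ is an approximate eigenvalue of $(T_\calU)'$, but the step from ``approximate eigenvalue of $(T_\calU)'$'' to ``eigenvalue of $(T_\calU)'$'' is not justified by anything you say. The saturation principle ``approximate eigenvalues become eigenvalues'' applies to operators of the form $S_\calU$ acting on an ultrapower $X_\calU$; the operator $(T_\calU)'$ acts on $(E_\calU)'$, which is \emph{not} an ultrapower --- the embedding $j\colon (E')_\calU \to (E_\calU)'$ is isometric but in general far from surjective --- so no such argument is available for $(T_\calU)'$ directly. Nor does a weak${}^*$-compactness argument rescue this: a weak${}^*$ cluster point of an approximate eigenvector sequence may well be zero. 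Your alternative route through (WS)-boundedness and Theorem~\ref{thm_cyclcic_per_pnt_spec_ws_bounded_and_dual} is also off target: the lemma carries no (WS)-boundedness hypothesis, and that theorem asserts cyclicity of the peripheral \emph{point} spectrum, not that every peripheral spectral value is an eigenvalue. The correct argument, which the paper gives in one line, assembles precisely the ingredients you already have but in a different order: $\sigma_{\per}((T_\calU)') = \sigma_{\per}(T') \subset \sigma_{\pnt}((T')_\calU) \subset \sigma_{\pnt}((T_\calU)')$. Here the first equality holds because spectra are unchanged under adjoints and ultrapowers; the middle inclusion holds because peripheral spectral values lie on the topological boundary of $\sigma(T')$, hence are approximate eigenvalues of $T'$, hence genuine eigenvalues of the ultrapower operator $(T')_\calU$ on $(E')_\calU$; and the last inclusion follows from (b) together with the injectivity of $j$ established in (a). In short: you must promote approximate eigenvectors at the level of $T'$ and then transport them through $j$, not attempt the promotion on $(T_\calU)'$ itself.
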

\begin{proof}
	(a) It is well-known that $j$ is isometric (see \cite[the beginning of Section~7]{Heinrich1980}) and clearly, the mapping is also positive. \par 
	(b) This is a straightforward exercise which we leave to the reader. \par 
	(c) We have $\sigma_{\per}((T_\calU)') = \sigma_{\per}(T') \subset \sigma_{\pnt}((T')_\calU) \subset \sigma_{\pnt}((T_\calU)')$, where the last inclusion follows from (b).  
\end{proof}

\begin{proof}[Proof of Theorem~\ref{thm_ws_bounded_op_zykl_per_spec}]
	Fix a free ultra filter $\calU$ on $\bbN$. Lemma~\ref{lem_ultra_power_dual_embedding}(c) implies that $\sigma_{\per}(T) = \sigma_{\per,\pnt}((T_\calU)')$. The operator $(T_\calU)'$ is (WS)-bounded since $||f((T_\calU)')|| = ||(f(T)_\calU)'|| = ||f(T)||$ for every analytic function $f$ on $\overline{\bbD}$. Hence, Theorem~\ref{thm_cyclcic_per_pnt_spec_ws_bounded_and_dual} yields that the peripheral point spectrum of $(T_\calU)'$ is cyclic.  
\end{proof}

We note that, instead of using Theorem~\ref{thm_cyclcic_per_pnt_spec_ws_bounded_and_dual} (respectively the underlying Theorem~\ref{thm_dominated_eigenvector_dual}), we could have also employed a somewhat different result of Krieger \cite[Satz~2.2.2]{Krieger1969} in the proof of Theorem~\ref{thm_ws_bounded_op_zykl_per_spec}. \par 

Next we give some cyclicity results on spectral values of positive operators which are based on the existence of certain approximate eigenvectors. Let us recall the relevant notions. If $X$ is a complex Banach space, $T \in \calL(X)$ and $\lambda \in \sigma(T)$ then we call a sequence $(x_n) \subset X$ an \emph{approximate eigenvector} of $T$ for the spectral value $\lambda$ if $(x_n)$ is bounded, if $0 < \liminf_n ||x_n||$ and if $(\lambda - T)x_n \to 0$. If such an approximate eigenvector exists, then $\lambda$ is called an \emph{approximate eigenvalue} of $T$.

\begin{definition} \label{def_dominated_approx_eigenvector_condition}
	Let $T$ be a positive operator on a complex Banach lattice $E$ and let $re^{i\theta}$ ($r>0$, $\theta \in \bbR$) be an approximate eigenvalue of $T$. We say that $re^{i\theta}$ fulfils the \emph{dominated approximate eigenvector condition} if $r$ is also an approximate eigenvalue of $T$ and if there are approximate eigenvectors $(z_n)$ and $(x_n)$ of $T$ for the spectral values $re^{i\theta}$ and $r$, respectively, such that $|z_n| \le x_n$ for all $n \in \bbN$. 
\end{definition}

Note that the condition that $re^{i\theta}$ and $r$ be approximate eigenvalues is always fulfilled if $r = r(T)$, i.e.~if $re^{i\theta}$ is a peripheral spectral value of $T$. \par 

For peripheral spectral values, the main idea of the following results is already implicitly contained in the proof of \cite[Folgerung~2.2.3]{Krieger1969}. However, our results also hold for spectral values $re^{i\theta}$ with $r < r(T)$.

\begin{theorem} \label{thm_dae_and_pre_adjoint_imply_powers_in_spec}
	Let $T$ be a positive operator on a complex Banach lattice $E$; suppose that $E$ has a pre-dual Banach lattice and $T$ has a pre-adjoint. If an approximate eigenvalue $re^{i\theta}$ of $T$ ($r > 0$, $\theta \in \bbR$) fulfils the dominated approximate eigenvector condition, then $re^{in\theta} \in \sigma(T)$ for all $n \in \bbZ$.
\end{theorem}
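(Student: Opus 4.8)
The plan is to pass to an ultra power, where the approximate eigenvectors become genuine eigenvectors, and then to transport the whole situation into an honest dual Banach lattice so that Theorem~\ref{thm_dominated_eigenvector_dual} becomes applicable. So write $E = F'$ and $T = S'$ with $S \in \calL(F)$ a positive operator, fix a free ultra filter $\calU$ on $\bbN$, and let $(z_n)$ and $(x_n)$ be approximate eigenvectors of $T$ for $re^{i\theta}$ and $r$, respectively, with $|z_n| \le x_n$ for all $n$. In the ultra power $E_\calU$ we then have $T_\calU z_\calU = re^{i\theta} z_\calU$ and $T_\calU x_\calU = r x_\calU$, since $(re^{i\theta}-T)z_n \to 0$ and $(r-T)x_n \to 0$; moreover $z_\calU \not= 0$ because $\|z_\calU\| = \lim_\calU \|z_n\| \ge \liminf_n \|z_n\| > 0$, and $0 \le |z_\calU| \le x_\calU$.

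Next I apply Lemma~\ref{lem_ultra_power_dual_embedding} with $F$ in place of $E$ and $S$ in place of $T$: the canonical embedding $j\colon E_\calU = (F')_\calU \to (F_\calU)'$ is isometric and positive and satisfies $(S_\calU)' \circ j = j \circ (S')_\calU = j \circ T_\calU$. Put $\hat z := j(z_\calU)$ and $\hat x := j(x_\calU)$. Then $(S_\calU)'\hat z = re^{i\theta}\hat z$ and $(S_\calU)'\hat x = r\hat x$; furthermore $\hat z \not= 0$ because $j$ is isometric, $\hat x \ge 0$, and $|\hat z| \le j(|z_\calU|) \le j(x_\calU) = \hat x$ by positivity of $j$, so that $\hat x > 0$ and $\hat z$ lies in the principal ideal $\bigl((F_\calU)'\bigr)_{\hat x}$. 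Now the Banach lattice $(F_\calU)'$ has the pre-dual $F_\calU$, the positive operator $(S_\calU)'$ has the positive pre-adjoint $S_\calU$, and $0 < \hat x \in \ker(r-(S_\calU)')$ with $0 \not= \hat z \in \bigl((F_\calU)'\bigr)_{\hat x} \cap \ker(re^{i\theta} - (S_\calU)')$; hence Theorem~\ref{thm_dominated_eigenvector_dual} applies to $(S_\calU)'$ and yields, for every $n \in \bbZ$, that $re^{in\theta}$ is an eigenvalue of $(S_\calU)'$.

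It follows that $re^{in\theta} \in \sigma\bigl((S_\calU)'\bigr) = \sigma(S_\calU) = \sigma(S) = \sigma(S') = \sigma(T)$ for each $n \in \bbZ$ — here we use that neither passing to the adjoint nor forming the ultra power changes the spectrum — which is the assertion. The point that needs care is precisely that $(E')_\calU$ is in general strictly smaller than $(E_\calU)'$, so Theorem~\ref{thm_dominated_eigenvector_dual} cannot be applied to $T_\calU$ on $E_\calU$ directly; the detour through $(F_\calU)'$ via Lemma~\ref{lem_ultra_power_dual_embedding}, together with the spectral identity $\sigma\bigl((S_\calU)'\bigr) = \sigma(T)$ used to return from the ultra power, is what makes the argument go through.
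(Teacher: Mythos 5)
Your proposal is correct and follows essentially the same route as the paper: pass to the ultra power so the approximate eigenvectors become genuine eigenvectors, push everything through the canonical positive isometric embedding $j\colon (F')_\calU \to (F_\calU)'$ of Lemma~\ref{lem_ultra_power_dual_embedding} intertwining $T_\calU$ with $(S_\calU)'$, and then apply Theorem~\ref{thm_dominated_eigenvector_dual} to $(S_\calU)'$ on the dual lattice $(F_\calU)'$. Your extra care about $z_\calU \not= 0$, $\hat x > 0$, and the chain of spectral identities returning from the ultra power only makes explicit what the paper leaves implicit.
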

\begin{proof}
	We may assume that $r = 1$. Let $F$ be a pre-dual Banach lattice of $E$, let $S$ be the pre-adjoint of $T$ and fix a free ultra filter $\calU$ on $\bbN$. \par 
	By assumption, we find approximate eigenvectors $z = (z_n)$ and $x = (x_n)$ of $T$ for the spectral values $e^{i\theta}$ and $1$ such that $|z_n| \le |x_n|$ for every $n \in \bbN$. Hence, $z_\calU$ and $x_\calU$ are eigenvectors of $T_\calU$ for the eigenvalues $e^{i\theta}$ and $1$ which fulfil $|z_\calU| \le x_\calU$. \par 
	Now, let $j: E_\calU \to (F_\calU)'$ be the canonical embedding from Lemma~\ref{lem_ultra_power_dual_embedding}(a). Then, by Lemma~\ref{lem_ultra_power_dual_embedding}(b), $j(z_\calU)$ and $j(x_\calU)$ are eigenvectors of $(S_\calU)'$ for the eigenvalues $e^{i\theta}$ and $1$, respectively. Moreover, the positivity of $j$ implies $|j(z_\calU)| \le j(|z_\calU|) \le j(x_\calU)$; hence, we can apply Theorem~\ref{thm_dominated_eigenvector_dual} to the operator $(S_\calU)'$ to conclude that $e^{in\theta} \in \sigma((S_\calU)') = \sigma(T)$ for each $n \in \bbZ$.  
\end{proof}

\begin{corollary} \label{cor_dae_on_refl_space_implies_powers_in_spec}
	Let $T$ be a positive operator on a reflexive complex Banach lattice $E$. If an approximate eigenvalue $re^{i\theta}$ of $T$ ($r>0$, $\theta \in \bbR$) fulfils the dominated approximate eigenvector condition, then $re^{in\theta} \in \sigma(T)$ for all $n \in \bbZ$.
\end{corollary}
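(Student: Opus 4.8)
The plan is to deduce this corollary directly from Theorem~\ref{thm_dae_and_pre_adjoint_imply_powers_in_spec}, by observing that a reflexive Banach lattice automatically supplies the structural hypotheses required there. First I would recall that if $E$ is reflexive, then $E$ is the dual of the Banach lattice $F := E'$: indeed $F' = E'' = E$ under the canonical evaluation map, and for reflexive $E$ this map is not merely an isometry but an isometric lattice isomorphism (see \cite[Section~II.5]{Schaefer1974}). Hence $E$ has a pre-dual Banach lattice in the sense of the definition given before Theorem~\ref{thm_dominated_eigenvector_dual}.

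Second, I would exhibit a pre-adjoint of $T$. Set $S := T' \in \calL(F)$. Under the identification $F' = E$ the bi-adjoint $S' = T''$ coincides with $T$, so $S$ is a pre-adjoint of $T$. Since the adjoint of a positive operator is positive, $S$ is positive, consistent with the remark preceding Theorem~\ref{thm_dominated_eigenvector_dual}.

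With these two observations in place, all hypotheses of Theorem~\ref{thm_dae_and_pre_adjoint_imply_powers_in_spec} are met; as the approximate eigenvalue $re^{i\theta}$ fulfils the dominated approximate eigenvector condition by assumption, that theorem yields $re^{in\theta} \in \sigma(T)$ for all $n \in \bbZ$, which is the claim. I do not expect a genuine obstacle here: the only point deserving a moment's care is checking that, for reflexive $E$, the canonical embedding $E \hookrightarrow E''$ is a lattice (and not merely a Banach space) isometry, so that "$E'$ is a pre-dual Banach lattice of $E$" holds literally; this is standard.
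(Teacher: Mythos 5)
Your argument is correct and is precisely the intended one: the paper states this corollary without proof because, for reflexive $E$, the identifications $E = (E')'$ and $T = (T')'$ make it an immediate special case of Theorem~\ref{thm_dae_and_pre_adjoint_imply_powers_in_spec}. Your additional care about the canonical embedding being a lattice isometry is appropriate but standard, as you note.
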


\begin{corollary} \label{cor_dae_for_ajdoint_implies_powers_in_spec_of_original_op}
	Let $T$ be a positive operator on a Banach lattice $E$. If a spectral value $re^{i\theta}$ of $T$ ($r>0$, $\theta \in \bbR$) is an approximate eigenvalue of the adjoint $T'$ and fulfils the dominated approximate eigenvector condition for $T'$, then $re^{in\theta} \in \sigma(T)$ for all $n \in \bbZ$. 
\end{corollary}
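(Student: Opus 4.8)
The plan is to reduce the statement to Theorem~\ref{thm_dae_and_pre_adjoint_imply_powers_in_spec} by passing to the adjoint operator. The key observation is that the dual $E'$ of any Banach lattice $E$ is again a Banach lattice which automatically possesses a pre-dual Banach lattice, namely $E$ itself, since $(E)' = E'$. Moreover, the adjoint $T'$ of the positive operator $T$ is positive, and it has a pre-adjoint: the operator $T \in \calL(E)$ satisfies $(T)' = T'$. Hence all the structural hypotheses of Theorem~\ref{thm_dae_and_pre_adjoint_imply_powers_in_spec} are met if we apply that theorem to the operator $T'$ acting on the Banach lattice $E'$.

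Next I would feed in the remaining hypotheses. By assumption, the spectral value $re^{i\theta}$ of $T$ is an approximate eigenvalue of $T'$ and fulfils the dominated approximate eigenvector condition for $T'$; since $\sigma(T') = \sigma(T)$, it is in particular a spectral value of $T'$, so these are exactly the analytic hypotheses that Theorem~\ref{thm_dae_and_pre_adjoint_imply_powers_in_spec} requires of the pair $(E', T')$. The theorem then yields $re^{in\theta} \in \sigma(T')$ for every $n \in \bbZ$.

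Finally I would invoke once more the elementary identity $\sigma(T') = \sigma(T)$, valid for any bounded operator on a Banach space, to conclude that $re^{in\theta} \in \sigma(T)$ for all $n \in \bbZ$, which is the assertion. I do not anticipate any real obstacle: the corollary is just the specialization of Theorem~\ref{thm_dae_and_pre_adjoint_imply_powers_in_spec} to a dual Banach lattice $E'$ together with its canonical pre-dual $E$, the only thing worth spelling out being the (routine) remark that a dual Banach lattice always admits a pre-dual and a dual operator always admits a pre-adjoint — this is precisely the feature that made reflexivity a sufficient hypothesis in Corollary~\ref{cor_dae_on_refl_space_implies_powers_in_spec}, whereas here one pays for the greater generality of $E$ by having to transfer the approximate-eigenvector data from $T$ to $T'$.
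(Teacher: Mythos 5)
Your proof is correct and is precisely the argument the paper intends (the corollary is stated without an explicit proof, immediately after Theorem~\ref{thm_dae_and_pre_adjoint_imply_powers_in_spec}): one applies that theorem to the positive operator $T'$ on the dual lattice $E'$, which has pre-dual $E$ and pre-adjoint $T$, and then uses $\sigma(T') = \sigma(T)$. All hypotheses transfer exactly as you describe, so there is nothing to add.
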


We want to point out that, even in finite dimensions, a peripheral spectral value of a positive operator need not fulfil the dominated approximate eigenvector condition, in general. This is shown by the next example.

\begin{example} \label{examp_no_dae}
	There is a positive operator $T$ on $\bbC^4$, $r(T) = 1$, such that $-1$ is a spectral value of $T$ which does neither fulfil the dominated approximate eigenvector condition for $T$ nor for $T'$. \par 
	Indeed, let $T$ be the operator on $\bbC^4$ whose representation matrix with respect to the canonical basis is given by
	\begin{align*}
		T =
		\begin{pmatrix}
			0 & 1 & 0 & 1 \\
			1 & 0 & 0 & 0 \\
			1 & 0 & 1 & 0 \\
			0 & 0 & 0 & 1
		\end{pmatrix} \text{.}
	\end{align*}
	A short computation shows that $\sigma(T) = \{-1,1\}$ and that the eigenspaces $\ker(-1-T)$ and $\ker(1-T)$ are spanned by $v_1 = (2,-2,-1,0)$ and $v_2 = (0,0,1,0)$, respectively. \par 
	Now assume for a contradiction that $-1$ fulfils the dominated approximate eigenvector condition for $T$. Then we find approximate eigenvectors $(z_n)$ for $-1$ and $(x_n)$ for $1$ such that $|z_n| \le x_n$. After choosing an appropriate subsequence twice, we may assume that $z_n \to z \not= 0$ and $x_n \to x > 0$. We have $|z| \le x$, and $z$ and $x$ are eigenvectors of $T$ for the eigenvalues $-1$ and $1$, respectively. Therefore, $z = \alpha v_1$ for some $\alpha \in \bbC \setminus\{0\}$ and $x = \beta v_2$ for some $\beta > 0$. However, looking at $v_1$ and $v_2$, we see that this contradicts $|z| \le x$. \par 
	As similar argument shows that $-1$ does not fulfil the the dominated approximate eigenvector condition for $T'$; just observe that the eigenspaces $\ker(-1-T')$ and $\ker(1-T')$ are spanned by $(2,-2,0,-1)$ and $(0,0,0,1)$, respectively.
\end{example}

Let us finally present a situation in which the dominated approximate eigenvector condition is automatically fulfilled so that our above results can be applied. Although the following theorem is already known from \cite[Folgerung~2.2.3]{Krieger1969} we chose to include it here since, to the knowledge of the author, the theorem has never appeared in English, and since our proof is partly different from the proof given in \cite[Folgerung~2.2.3]{Krieger1969}.

\begin{theorem} \label{thm_resolvent_grows_fast}
	Let $T$ be a positive operator on a complex Banach lattice $E$, $r(T) = 1$. Let $e^{i\theta}$ ($\theta \in \bbR$) be a peripheral spectral value of $T$. If
	\begin{align*}
		\limsup_{r \downarrow 1} \frac{||R(re^{i\theta},T)||}{||R(r,T)||} > 0 \text{,}
	\end{align*}
	then $e^{in\theta} \in \sigma(T)$ for each $n \in \bbZ$.
\end{theorem}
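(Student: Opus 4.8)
The plan is to show that $e^{i\theta}$ fulfils the dominated approximate eigenvector condition for the adjoint $T'$ and then to invoke Corollary~\ref{cor_dae_for_ajdoint_implies_powers_in_spec_of_original_op}. Working with $T'$ is convenient here: $T'$ is again a positive operator with $r(T') = 1$, one has $\sigma(T') = \sigma(T)$ and $||R(\lambda,T')|| = ||R(\lambda,T)||$ for every $\lambda \notin \sigma(T)$, so the growth hypothesis of the theorem holds verbatim for $T'$, and $e^{i\theta}$ is a peripheral spectral value of $T'$. Since $E'$ is a dual Banach lattice, Corollary~\ref{cor_dae_for_ajdoint_implies_powers_in_spec_of_original_op} will apply as soon as we have produced suitable approximate eigenvectors inside $E'$.

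For the construction, use $\limsup_{r \downarrow 1} ||R(re^{i\theta},T)||/||R(r,T)|| > 0$ to fix a constant $c > 0$ and a sequence $r_n > 1$ with $r_n \to 1$ and $||R(r_n e^{i\theta},T')|| \ge c\,||R(r_n,T')||$ for all $n$. For each $n$ choose $y_n \in E'$ with $||y_n|| = 1$ and $||R(r_n e^{i\theta},T')\,y_n|| \ge \tfrac{1}{2}||R(r_n e^{i\theta},T')||$, and set
\[
	z_n := \frac{R(r_n e^{i\theta},T')\,y_n}{||R(r_n e^{i\theta},T')\,y_n||} \text{,} \qquad x_n := \frac{R(r_n,T')\,|y_n|}{||R(r_n e^{i\theta},T')\,y_n||} \text{.}
\]
Then $||z_n|| = 1$; since $T'$ is positive we have $|R(\lambda,T')\phi| \le R(|\lambda|,T')|\phi|$ for $|\lambda| > 1$ (the estimate already used in the proof of Corollary~\ref{cor_pos_only_poles_and_spec_circle_ws_bounded}), which gives $|z_n| \le x_n$. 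Moreover $||x_n|| \le ||R(r_n,T')||\big/\big(\tfrac12 c\,||R(r_n,T')||\big) = 2/c$, so $(x_n)$ is bounded, while $x_n \ge |z_n| \ge 0$ forces $||x_n|| \ge ||z_n|| = 1$. Finally, since $e^{i\theta} \in \sigma(T')$, the resolvent norms $||R(r_n e^{i\theta},T')||$ (and hence the quantities $||R(r_n e^{i\theta},T')\,y_n||$) tend to $\infty$; because $(r_n e^{i\theta} - T')z_n = y_n/||R(r_n e^{i\theta},T')\,y_n||$ and $(r_n - T')x_n = |y_n|/||R(r_n e^{i\theta},T')\,y_n||$, both residuals go to $0$ in norm, and then $(e^{i\theta}-T')z_n \to 0$ and $(1-T')x_n \to 0$ follow from $r_n \to 1$ together with the boundedness of $(z_n)$ and $(x_n)$.

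Consequently $(z_n)$ and $(x_n)$ are approximate eigenvectors of $T'$ for $e^{i\theta}$ and for $1$, respectively, with $|z_n| \le x_n$; thus $e^{i\theta}$ (a spectral value of $T$ and an approximate eigenvalue of $T'$) satisfies the dominated approximate eigenvector condition for $T'$, and Corollary~\ref{cor_dae_for_ajdoint_implies_powers_in_spec_of_original_op} yields $e^{in\theta} \in \sigma(T)$ for every $n \in \bbZ$. I expect the only non-formal point to be the uniform bound $||x_n|| \le 2/c$: this is the sole place where the hypothesis on the ratio of resolvent norms enters, and it is exactly what makes $(x_n)$ an \emph{admissible} approximate eigenvector. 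A secondary point requiring a little care is the normalisation of $x_n$ — dividing by $||R(r_n e^{i\theta},T')\,y_n||$ rather than by $||R(r_n,T')\,|y_n|\,||$ — which is what keeps the domination $|z_n| \le x_n$ intact.
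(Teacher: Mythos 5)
Your proposal is correct and follows essentially the same route as the paper: the identical construction of $z_n$ and $x_n$ from the resolvent applied to normalized functionals $y_n$ (with the same choice of common normalising factor $\|R(r_ne^{i\theta},T')y_n\|$ to preserve the domination $|z_n|\le x_n$), followed by an appeal to Corollary~\ref{cor_dae_for_ajdoint_implies_powers_in_spec_of_original_op}. The only difference is that you spell out a few routine verifications (the residuals tending to $0$, the passage from $r_n$ to the limit points $e^{i\theta}$ and $1$) that the paper leaves implicit.
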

\begin{proof}
	Let $e^{i\theta}$ ($\theta \in \bbR$) be a peripheral spectral value of $T$; we show that $e^{i\theta}$ fulfils the dominated approximate eigenvector condition for $T'$, so that we can apply Corollary~\ref{cor_dae_for_ajdoint_implies_powers_in_spec_of_original_op}. \par 
	To this end, let $c > 0$ and let $1 < r_n \downarrow 1$ such that $||R(r_ne^{i\theta},T')|| \ge c\,||R(r_n,T')||$ for each $n$. Choose a sequence of normalized functionals $(a'_n) \subset X'$ which fulfil the estimate $||R(r_ne^{i\theta},T')a'_n|| \ge \frac{1}{2}||R(r_ne^{i\theta},T')||$ for each $n \in \bbN$. Now, define
	\begin{align*}
		z'_n := \frac{R(r_ne^{i\theta},T')a'_n}{||R(r_ne^{i\theta},T')a'_n||} \quad \text{and} \quad x'_n := \frac{R(r_n,T')|a'_n|}{||R(r_ne^{i\theta},T')a'_n||}
	\end{align*}
	Since $||R(r_ne^{i\theta},T')a'_n|| \to \infty$ as $n \to \infty$, we know that $(z'_n)$ is an approximate eigenvector of $T'$ for the spectral value $e^{i\theta}$. Moreover, we have $|z'_n| \le x'_n$ for each $n$. Furthermore, it is clear that $||x'_n|| \ge 1$ for each $n$, and on the other hand we obtain the estimate
	\begin{align*}
		||x'_n|| \le \frac{2\,||R(r_n,T')||}{||R(r_ne^{i\theta},T')||} \le \frac{2}{c}
	\end{align*}
	for each $n$. Hence, the sequence $(x'_n)$ is bounded and therefore it is an approximate eigenvector of $T'$ for the spectral value $1$. This proves that $e^{i\theta}$ fulfils indeed the dominated approximate eigenvector condition for $T'$.  
\end{proof}

Let us briefly compare our proof of the above result with the one given by Krieger in \cite[Folgerung~2.2.3]{Krieger1969}: We use the same construction as Krieger to obtain approximate eigenvectors which fulfil a certain domination condition. Also rather similar to Krieger we then employ an ultra power construction to consider the approximate eigenvectors as eigenvectors (cf.~our proof of Theorem~\ref{thm_dae_and_pre_adjoint_imply_powers_in_spec}). The remainder of the proof however differs from Krieger's approach: While the rest of our argument relies on Theorem~\ref{thm_dominated_eigenvector_dual}, Krieger uses a different spectral result \cite[Satz~2.2.2]{Krieger1969} which is based on a rather technical resolvent estimate.

\section{On the boundary point spectrum of strongly continuous Markov semigroups} \label{section_boundary_pnt_spec_of_markov_sg}

In this section we want to show that a similar construction as in Example~\ref{examp_markov_op_with_non_cyclic_per_point_spec} is also possible for $C_0$-semigroups of Markov operators. Let us briefly fix some notation: Consider a $C_0$-semigroup $(T(t))_{t \ge 0}$ on a complex Banach space $X$ with generator $A$. If $s(A)$ denotes the \emph{spectral bound} of $A$ and if $\sigma(A) \not= \emptyset$, then the set $\sigma_{\operatorname{bnd},\pnt}(A) := \sigma(A) \cap (s(A) + i\bbR)$ is called the \emph{boundary point spectrum} of $A$. A subset $M \subset \bbC$ is called \emph{additively cyclic} if for each $\alpha + i \beta \in M$ ($\alpha, \beta \in \bbR$) we also have $\alpha + in\beta \in M$ for each $n \in \bbZ$. The boundary point spectrum of the generator of a $C_0$-semigroups of positive operators on a complex Banach lattice $E$ need not be additively cyclic in general (see e.g.~\cite[Examples~B-III.2.13 and C-III.4.4]{Arendt1986} for some counter examples), but it is under some additional assumptions (see for example \cite[Corollary~C-III.2.3, Section~C-III.3 and Corollary~C-III.4.3]{Arendt1986}). \par 
If $E = C(K;\bbC)$ for some compact Hausdorff space $K$, then we call a $C_0$-semigroup $(T(t))_{t \ge 0}$ on $E$ a \emph{Markov semigroup} if each operator $T(t)$ is a Markov operator. One might ask whether it is possible to prove similar results as in Theorem~\ref{thm_dual_markov_op_on_c_k_has_cyclic_per_point_spec} or in Proposition~\ref{prop_markov_op_on_order_complete_space_rational_per_pnt_spec} also for Markov semigroups. Let us explain briefly why this question does not really make sense for $C_0$-semigroups:

\begin{remark} \label{rem_marko_sg_on_order_complete_space}
	Let $K$ be a compact Hausdorff space such that $C(K;\bbC)$ is order complete and let $(T(t))_{t \ge 0}$ be a $C_0$-semigroup of Markov operators on $C(K;\bbC)$ with generator $A$. Then it follows from \cite[Theorem A-II.3.6]{Arendt1986} that the operator $A$ is automatically bounded. Moreover, the boundary spectrum $\sigma_{\operatorname{bnd}}(A) := \sigma(A) \cap (s(A) +i \bbR) = \sigma(A) \cap i\bbR$ of $A$ is additively cyclic due to \cite[Proposition C-III.2.9 and Theorem C-III.2.10]{Arendt1986}, so we conclude that in fact $\sigma_{\operatorname{bnd}}(A) = \sigma_{\operatorname{bnd},\pnt}(A) = \{0\}$.
\end{remark}

More generally, as in the single operator case, one might wonder whether the generator of each $C_0$-semigroup of Markov operators on an arbitrary $C(K;\bbC)$-space has additively cyclic boundary point spectrum. As in the single operator case, the answer is negative. This is demonstrated by the following adaptation of Example~\ref{examp_markov_op_with_non_cyclic_per_point_spec}. As Example~\ref{examp_markov_op_with_non_cyclic_per_point_spec} it appeared previously in the preprint collection \cite{Gluck2014}.

\begin{example} \label{examp_markov_sg_with_non_cyclic_per_pnt_spec}
	There is a $C(K;\bbC)$-space $E$ and a strongly continuous Markov semigroup $(T(t))_{t \ge 0}$ on $E$ such that $i$ is an eigenvalue of its generator $A$, but $2i$ is not. \par 
	To construct such an example, let $(R(t))_{t \ge 0}$ be the rotation semigroup on $C(\bbT;\bbC)$, given by $R(t)f(x) = f(e^{-it}x)$ for each $f \in C(\bbT;\bbC)$ and each $x \in \bbT$. Now, define $K := \mathbb{T} \, \dot \cup \, [0,\infty]$ and $E = C(K;\bbC)$. For each $f \in E$ and each $t \ge 0$ define an operator $T(t) \in \calL(E)$ by
	\begin{align*}
		T(t)f(x) = 
		\begin{cases}
			f(e^{-it}x) \quad & \text{if } x \in \mathbb{T} \\
			f(x-t) \quad & \text{if } x \in [t,\infty] \\
			e^{-(t-x)}f(0) + e^{-(t-x)}\int_0^{t-x} e^s \, \langle \mu, R(s)f|_\mathbb{T} \rangle \, ds \quad & \text{if } x \in [0,t) \text{,}
		\end{cases}
	\end{align*}
	where $\mu \in C(\mathbb{T}; \bbC)'$ is the functional on $C(\mathbb{T};\bbC)$ that is defined by $\langle \mu, f \rangle = \frac{1}{2}\big( f(i) + f(-i) \big)$. \par 
	Now, let $C^1([0,\infty];\bbC)$ be the set of all functions in $C([0,\infty];\bbC)$ such that $f|_{[0,\infty)}$ is continuously differentiable and such that its derivative has a continuous extension to $[0,\infty]$. A somewhat lengthy, but straight forward computation shows that $(T(t))_{t \ge 0}$ is a $C_0$-semigroup of Markov operators on $E$, that the domain of its generator $A$ is given by
	\begin{align*}
		D(A) = \{f \in C(K): \; & f|_\mathbb{T} \in C^1(\mathbb{T}) \text{, } \, f|_{[0,\infty]} \in C^1([0,\infty]) \text{,} \\
		& f'(0) = f(0) - \langle \mu, f|_\mathbb{T} \rangle\}
	\end{align*}
	and that we have
	\begin{align*}
		Af(x) = 
		\begin{cases}
			-\frac{d}{d\theta} f(xe^{i\theta})|_{\theta = 0} \quad & \text{if } x \in \mathbb{T} \\
			-f'(x) \quad & \text{if } x \in [0,\infty)
		\end{cases}
	\end{align*}
	for all $f \in D(A)$. Now we can immediately check that $i$ is an eigenvalue of $A$; indeed, a corresponding eigenfunction is given by
	\begin{align*}
		g(x) =
		\begin{cases}
			x^{-1} \quad & \text{if } x \in \mathbb{T} \\
			0 \quad & \text{if } x \in [0,\infty] \text{.}
		\end{cases}
	\end{align*}
	To show that $2i$ is not an eigenvalue of $A$, assume for a contradiction that $Ah = 2ih$ for a function $0 \not= h \in D(A)$. Then there are scalars $a,b \in \mathbb{C}$ such that
	\begin{align*}
		h(x) = 
		\begin{cases}
			a x^{-2} \quad & \text{if } x \in \mathbb{T} \\
			b e^{-2i x} & \text{if } x \in [0,\infty) \text{.}
		\end{cases}
	\end{align*}
	Since $h$ must be continuous at $\infty$, we conclude that $b = 0$. Now it follows from the equation $h'(0) = h(0) - \langle \mu, h|_\mathbb{T} \rangle$ that $\langle \mu, h|_\mathbb{T} \rangle = 0$. Since $\langle \mu, h|_\mathbb{T} \rangle = \frac{1}{2}\big( h(i) + h(-i) \big) = -a$, we also have $a = 0$. This contradicts $h \not= 0$.
\end{example}

\appendix

\section{The signum operator} \label{section_the_signum_operator}

In this appendix we shortly recall some facts about the signum operator on complex Banach lattices which are needed in the article. First we recall the following result from \cite[Section~C-I.8]{Arendt1986}:

\begin{proposition} \label{prop_signum_op_on_principal_ideal}
	Let $E$ be a complex Banach lattice and let $f \in E \setminus \{0\}$. Then there exists a unique linear operator $S_f$ on $E_{|f|}$ which fulfils the following two conditions:
	\begin{enumerate}[(a)]
		\item $S_f \overline{f} = |f|$, where $\overline{f}$ denotes the complex conjugate vector of $f$. \par 
		\item $|S_fg| \le |g|$ for all $g \in E_{|f|}$.
	\end{enumerate}
\end{proposition}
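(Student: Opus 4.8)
The plan is to work inside the principal ideal $E_{|f|}$, which, equipped with its order-unit norm $\|g\|_{|f|} = \inf\{\lambda > 0 : |g| \le \lambda |f|\}$, is (after completion) an AM-space with unit $|f|$; by Kakutani's representation theorem such a space is isometrically lattice isomorphic to a space $C(\Omega;\bbC)$ for some compact Hausdorff $\Omega$, with $|f|$ corresponding to the constant function $\mathbbm 1$. First I would transport the whole problem along this isomorphism, so that $\overline{f}$ corresponds to some $\varphi \in C(\Omega;\bbC)$ with $|\varphi| = \mathbbm 1$ pointwise, i.e.\ $\varphi$ is a continuous unimodular function on $\Omega$. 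Then the only candidate for $S_f$ is forced: on $C(\Omega;\bbC)$ the operator must be the multiplication operator $M_{\overline\varphi}\colon g \mapsto \overline\varphi\, g$. This is clearly linear and bounded, sends $\varphi$ to $|\varphi|^2 = \mathbbm 1$ (which is condition (a)), and satisfies $|M_{\overline\varphi}g| = |\overline\varphi|\,|g| = |g|$ pointwise, which is even stronger than condition (b). Transporting back gives the desired operator $S_f$ on $E_{|f|}$.

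For \textbf{existence}, the only point needing care is the identification $|\varphi| = \mathbbm 1$: here $|\overline f| = |f|$ in $E$, and under the lattice isomorphism $|f|$ goes to $\mathbbm 1$, so indeed $|\varphi| = \mathbbm 1$; continuity of $\varphi$ is automatic since it is an element of $C(\Omega;\bbC)$. One should also note that $M_{\overline\varphi}$ maps $C(\Omega;\bbC)$ into itself because $\overline\varphi$ is continuous, so it genuinely defines an operator on $E_{|f|}$ (not merely on some completion); this is why we need $|f|$ to be an order unit of the ideal rather than just an element of $E$.

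For \textbf{uniqueness}, suppose $S$ is any linear operator on $E_{|f|}$ satisfying (a) and (b). Condition (b) with $g = \overline f$ gives $|S\overline f| \le |\overline f| = |f|$; combined with (a), $|f| = S\overline f = |f|$, so equality holds in (b) for this particular $g$ — in the $C(\Omega;\bbC)$ picture, $|S\varphi| = \mathbbm 1 = |\varphi|$. The key algebraic step is then: multiply by $\overline\varphi$ to get $\overline\varphi$ in the ideal, write an arbitrary $g \in C(\Omega;\bbC)$ as $g = (\overline\varphi g)\cdot \varphi$, and use that $\overline\varphi g \in C(\Omega;\bbC)$ together with a standard ``lattice operators on $C(\Omega)$ that contract modulus are multiplications'' argument. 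More concretely: for real constants one shows $S$ is $C(\Omega;\bbR)$-linear in the module sense by approximating; indeed condition (b) forces $S$ to be a disjointness-preserving (even modulus-preserving) operator fixing $\varphi$ up to the unit, and such operators on $C(\Omega)$ are multiplication operators. Since $S\varphi = \mathbbm 1$ pins down the multiplier to be exactly $\overline\varphi$, we get $S = M_{\overline\varphi} = S_f$.

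The \textbf{main obstacle} I anticipate is making the uniqueness argument clean: going from ``$S$ contracts the modulus and $S\overline f = |f|$'' to ``$S$ is multiplication by $\overline\varphi$'' requires knowing that modulus-contracting linear maps on an AM-space with unit are, in a suitable sense, module maps over the continuous functions — equivalently, that $S$ commutes with multiplication by (continuous) idempotents / characteristic-like functions. Since the paper explicitly cites \cite[Section~C-I.8]{Arendt1986} for this proposition, the honest thing to do is to refer there for the uniqueness bookkeeping; the proof here would then consist of the Kakutani reduction plus the explicit construction $S_f = M_{\overline\varphi}$, with the verification of (a) and (b) being the pointwise computation above, and uniqueness either sketched via the module-map observation or deferred to \cite{Arendt1986}.
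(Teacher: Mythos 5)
The paper does not actually prove this proposition: it is explicitly \emph{recalled} from \cite[Section~C-I.8]{Arendt1986}, so there is no in-paper argument to compare against. Your existence argument is the standard one and is consistent with the paper's own later description of $S_f$ as multiplication by (the Kakutani representative of) $f$. Two remarks on the details. First, the hedge ``(after completion)'' is unnecessary and slightly off: for a Banach lattice $E$ the principal ideal $E_{|f|}$ is \emph{already} complete in the gauge norm (this is the corollary of Proposition~II.7.2 in Schaefer's book that the paper invokes repeatedly), so Kakutani applies to $E_{|f|}$ itself and there is no question of the multiplication operator escaping the original space.

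Second, your uniqueness sketch has a genuine soft spot. Condition (b) does not make $S$ ``modulus-preserving'', only modulus-contracting, and the claim that disjointness-preserving operators on $C(\Omega)$ are multiplication operators is false as stated: they are weighted composition operators $g \mapsto w\cdot(g\circ\tau)$, and disjointness preservation alone cannot exclude a nontrivial $\tau$. What actually does the work is the \emph{pointwise} bound in (b), used directly: for each $x \in \Omega$ the linear functional $g \mapsto (Sg)(x)$ satisfies $|(Sg)(x)| \le |g(x)|$, hence vanishes on the kernel of evaluation at $x$ and is therefore of the form $(Sg)(x) = c_x\, g(x)$ for some scalar $c_x$. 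Condition (a) gives $c_x\varphi(x) = (S\varphi)(x) = 1$, and since $|\varphi(x)| = 1$ this forces $c_x = \overline{\varphi(x)}$, i.e.\ $S = M_{\overline{\varphi}}$. This two-line localization replaces the appeal to the structure theory of disjointness-preserving operators; deferring the whole proposition to \cite{Arendt1986}, as the paper itself does, is of course also legitimate.
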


The operator $S_f$ is called the \emph{signum operator} associated to $E_f$. If we identify $E_{|f|}$ with a $C(K;\bbC)$-space ($K$ compact) by means of the Kakutani representation theorem such that $|f|$ corresponds to the constant $1$-function on $K$, then we have $S_{f}g = fg$ for each $g \in C(K;\bbC)$ where the multiplication is computed in $C(K;\bbC)$. Now we come to the major definition in this appendix (compare \cite[Definitions~B-III.2.2(b) and C-III.2.1]{Arendt1986}).

\begin{definition} \label{def_lattice_powers_of_a_function}
	Let $E$ be a complex Banach lattice and let $f \in E \setminus \{0\}$. By means of the Kakutani representation theorem we can identify the principal ideal $E_{|f|}$ with a $C(K;\bbC)$-space for some compact Hausdorff-space $K$ such that $|f|$ corresponds to the constant $1$-function on $K$. Using the multiplication on $C(K;\bbC)$, we define $f^{[n]} := f^n$ for each $n \in \bbZ$.
\end{definition}

Note that $f^{[n]} = S_f^n |f|$ whenever $n \ge 0$ and $f^{[n]} = S_{\overline{f}}^n |f|$ whenever $n < 0$. This shows that the definition of $f^{[n]}$ is independent of the choice of the representation $E \xrightarrow{\raisebox{-0.6 em}{\smash{\ensuremath{\sim}}}} C(K;\bbC)$. The following property of $f^{[n]}$ is important:

\begin{proposition} \label{prop_consistency_signum}
	Let $E$ be a complex Banach lattice and let $f \in E \setminus \{0\}$. Suppose that $h \in E_+$ with $f \in E_h$ and identify the principal ideal $E_h$ with a $C(K;\bbC)$-space for some compact Hausdorff space $K$, where $h$ corresponds to the constant $1$-function on $K$. In the space $C(K; \bbC)$ the vectors $f^{[n]}$ are given by
	\begin{align}
		f^{[n]}(x)=
		\begin{cases}
			(\frac{f(x)}{|f(x)|})^n |f(x)| \quad & \text{if } f(x) \not= 0 \\
			0 \quad & \text{if } f(x) = 0 \text{.}
		\end{cases}
		\label{form_lattice_powers_consistent}
	\end{align}
\end{proposition}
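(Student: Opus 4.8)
The plan is to reduce the statement to an explicit description of the signum operator $S_f$ inside the representation $E_h \cong C(K;\bbC)$ and then to iterate it, using the identities $f^{[n]} = S_f^{\,n}|f|$ (for $n \ge 0$) and $f^{[n]} = S_{\overline f}^{\,-n}|f|$ (for $n < 0$) recorded after Definition~\ref{def_lattice_powers_of_a_function}. First I would fix an isometric lattice isomorphism $E_h \cong C(K;\bbC)$ with $h$ corresponding to $\mathbbm{1}$; since it preserves moduli, the element $|f| \in E$ corresponds to the pointwise modulus $|f| \in C(K;\bbR)$ of the (now complex-valued, continuous) function $f$, and the principal ideal $E_{|f|}$ corresponds to $\{g \in C(K;\bbC): |g| \le c|f| \text{ for some } c \ge 0\}$. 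In particular $E_{|f|} \subset E_h$, so it suffices to compute $S_f^{\,n}|f|$ as an element of $C(K;\bbC)$.

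The key point is that, although the pointwise ratio $x \mapsto f(x)/|f(x)|$ is a priori only defined and continuous on the open set $\{f \ne 0\}$, the product $g \mapsto (f/|f|)\cdot g$ maps $E_{|f|}$ into $C(K;\bbC)$: on $\{f \ne 0\}$ it equals the continuous function $(f/|f|)g$, while at a point $x_0$ with $f(x_0) = 0$ the domination $|g(x)| \le c|f(x)| \to 0$ as $x \to x_0$ forces the continuous extension by the value $0$. The resulting linear operator $\widetilde S$ on $E_{|f|}$ clearly satisfies $|\widetilde S g| \le |g|$ for all $g$ (in fact equality), and it satisfies $\widetilde S\,\overline f = |f|$ because $(f/|f|)\overline f = |f|$ on $\{f \ne 0\}$ and both sides vanish where $f = 0$. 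By the uniqueness assertion of Proposition~\ref{prop_signum_op_on_principal_ideal}, $\widetilde S = S_f$; running the same argument with $\overline f$ in place of $f$ shows $S_{\overline f}\,g = (\overline f/|f|)\cdot g = \overline{(f/|f|)}\cdot g$.

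It then remains to iterate. Since $(f/|f|)\cdot |f| = f$ (with both sides $0$ where $f = 0$), a trivial induction gives $S_f^{\,n}|f| = (f/|f|)^n\,|f|$ for every $n \ge 0$, which is precisely the right-hand side of~\eqref{form_lattice_powers_consistent}; together with $f^{[n]} = S_f^{\,n}|f|$ this settles the case $n \ge 0$. For $n < 0$ one has $f^{[n]} = S_{\overline f}^{\,-n}|f| = \big(\overline{f/|f|}\big)^{-n}|f| = (f/|f|)^{n}|f|$, the last equality because $f/|f|$ takes unimodular values on $\{f \ne 0\}$, so that $\overline{f/|f|} = (f/|f|)^{-1}$ there; again this is the right-hand side of~\eqref{form_lattice_powers_consistent}. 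The only genuinely delicate step is the continuity of $(f/|f|)\cdot g$ at the zeros of $f$, which is exactly where the hypothesis $f \in E_h$ (equivalently, the domination $|g| \le c|f|$ for $g \in E_{|f|}$) enters; everything else is bookkeeping combined with the uniqueness clause of Proposition~\ref{prop_signum_op_on_principal_ideal}.
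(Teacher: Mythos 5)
Your proof is correct and follows essentially the same route as the paper: define the pointwise multiplication operator by $f/|f|$ (extended by $0$ on the zero set of $f$), identify it with $S_f$ via the uniqueness clause of Proposition~\ref{prop_signum_op_on_principal_ideal}, and iterate, treating $n<0$ with $S_{\overline f}$. The only difference is that you spell out the continuity of $(f/|f|)\cdot g$ at the zeros of $f$, a detail the paper leaves implicit.
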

\begin{proof}
	First, let $n \in \bbN_0$. Define an operator $\tilde S_{f}$ on $E_{|f|} = C(K;\bbC)_{|f|}$ which is given by 
	\begin{align*}
		\tilde S_{f} g(x) =
		\begin{cases}
			\frac{f(x)}{|f(x)|} g(x) \quad & \text{if } f(x) \not= 0 \\
			0 \quad & \text{if } f(x) = 0
		\end{cases}
	\end{align*}
	for every $g \in C(K;\bbC)_{|f|}$ and every $x \in K$. Then $\tilde S_f$ fulfils properties (a) and (b) from Proposition~\ref{prop_signum_op_on_principal_ideal} and we thus have $\tilde S_f = S_f$. This implies $f^{[n]} = S_f^n |f| = \tilde S_f^n |f|$, which proves the assertion. For $n < 0$ one argues similarly, using the operators $\tilde S_{\overline{f}}$ and $S_{\overline{f}}$ instead.  
\end{proof}

We can now prove the following lemma which is a slight modification of \cite[Lemma~C-III.3.11]{Arendt1986}.

\begin{lemma} \label{lem_dimension_estimate_complex_powers}
	Let $E$ be a complex Banach lattice and let $G,H \subset E$ be to vector subspaces of $E$. Let $n \in \bbZ$ and assume that $f^{[n]} \in H$ for each $f \in G \setminus \{0\}$. Then $\dim G \le \dim H$.
	\begin{proof}
		The proof is very similar to the proof of \cite[Lemma~C-III.3.11]{Arendt1986}; for the convenience of the reader, we include it here. Let $0 < m \le \dim G$, let $g_1,...,g_m$ be linearly independent elements of $G$ and define $u := |g_1| + ... + |g_m|$. Then we can identify the principal ideal $E_u$ with a $C(K; \bbC)$-space. There are points $x_1,...,x_m \in K$ and functions $f_1,...,f_m \in C(K;\bbC)$ with the same linear span as $g_1,...,g_m$ which have the property that $f_j(x_k) = \delta_{jk}$ (where $\delta_{jk}$ is the Dirac delta) for all $j,k \in \{1,...,m\}$; this can easily be seen by an induction over $m$. By our assumption, we have $f_j^{[n]} \in H$ for all $j \in \{1,...,m\}$ and due to Proposition~\ref{prop_consistency_signum} we can compute $f_j^{[n]}$ in $C(K;\bbC)$ by means of formula (\ref{form_lattice_powers_consistent}). Hence, we also have $f_j^{[n]}(x_k) = \delta_{jk}$ for all $j,k \in \{1,...,m\}$ and therefore, the vectors $f_1^{[n]},...,f_m^{[n]}$ are linearly independent. Thus $m \le \dim H$, which proves the assertion.  
	\end{proof}
\end{lemma}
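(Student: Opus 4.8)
The plan is to reduce the statement to a purely finite-dimensional fact living inside a single principal ideal. It suffices to show that for every integer $m$ with $m \le \dim G$ one can exhibit $m$ linearly independent elements of $H$. So I would fix such an $m$ together with linearly independent vectors $g_1,\dots,g_m \in G$. The naive candidates $g_1^{[n]},\dots,g_m^{[n]}$ need not be linearly independent; the crucial point is that the hypothesis yields $f^{[n]} \in H$ for \emph{every} non-zero $f$ in $V := \operatorname{span}\{g_1,\dots,g_m\}$, so I am free to replace $g_1,\dots,g_m$ by any basis of $V$ that suits me.

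Next I would pass to the principal ideal $E_u$ with $u := |g_1| + \dots + |g_m|$, which contains all of $V$, and identify $E_u$ with a $C(K;\bbC)$-space (for some compact Hausdorff space $K$) via the Kakutani representation theorem, so that $u$ corresponds to $\mathbbm{1}$. Inside $C(K;\bbC)$ the subspace $V$ is $m$-dimensional, and the point-evaluation functionals $\operatorname{ev}_x\colon f \mapsto f(x)$ ($x \in K$) separate the points of $V$, because a continuous function vanishing at every point of $K$ is zero. A separating family of linear functionals on a finite-dimensional space spans its dual, so there are points $x_1,\dots,x_m \in K$ such that $\operatorname{ev}_{x_1},\dots,\operatorname{ev}_{x_m}$ is a basis of $V'$. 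Letting $f_1,\dots,f_m \in V$ be the corresponding dual basis, we get $f_j(x_k) = \delta_{jk}$ for all $j,k$ (a routine finite-dimensional fact, provable by induction on $m$). Each $f_j$ is non-zero, hence lies in $G \setminus \{0\}$, so $f_j^{[n]} \in H$ by hypothesis.

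Finally I would compute the lattice powers $f_j^{[n]}$ inside $C(K;\bbC) = E_u$ via Proposition~\ref{prop_consistency_signum}, i.e.\ formula~(\ref{form_lattice_powers_consistent}): since $f_j(x_k) \in \{0,1\}$ we obtain $f_j^{[n]}(x_k) = \delta_{jk}$ for all $j,k$. Consequently $f_1^{[n]},\dots,f_m^{[n]}$ are separated by $\operatorname{ev}_{x_1},\dots,\operatorname{ev}_{x_m}$, hence linearly independent, and they all belong to $H$; therefore $\dim H \ge m$. Letting $m$ range over all integers $\le \dim G$, and using the convention that infinite cardinalities are not distinguished, this gives $\dim G \le \dim H$.

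The one genuinely delicate point is the consistency of the lattice powers: $f_j^{[n]}$ is defined via the ideal $E_{|f_j|}$, whereas the argument above computes it inside the larger common ideal $E_u$, and one must know that these two computations agree — otherwise the values $f_j^{[n]}(x_k)$ need not be as claimed. This is exactly what Proposition~\ref{prop_consistency_signum} guarantees; everything else is elementary linear algebra together with the Kakutani representation theorem.
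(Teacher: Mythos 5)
Your proposal is correct and follows essentially the same route as the paper's own proof: restrict to the span of $g_1,\dots,g_m$ inside the principal ideal $E_u$, use the Kakutani representation to produce functions $f_1,\dots,f_m$ with $f_j(x_k)=\delta_{jk}$, and invoke Proposition~\ref{prop_consistency_signum} to evaluate the lattice powers pointwise and conclude linear independence. The only cosmetic difference is that you obtain the biorthogonal system via a dual-basis argument where the paper appeals to an induction on $m$; both are routine and equivalent.
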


\subsubsection*{Acknowledgements}
	I would like to thank Manuel Bernhard and Manfred Sauter for their help in the construction of Example~\ref{examp_no_dae}; moreover, Manfred Sauter assisted me with the proof of Lemma~\ref{lem_dimension_estimate_complex_powers}. My thanks also go to Rainer Nagel who suggested the investigation of weakly almost periodic operators in the context of Theorem~\ref{thm_cyclic_per_pnt_spec_almost_weakly_periodic}.

\bibliographystyle{spmpsci}
\bibliography{literature}

\end{document}